\documentclass[10pt,a4paper]{article}

\usepackage{amsmath,amssymb,amsthm,mathrsfs}
\usepackage{stmaryrd}  
\usepackage{graphicx}
\usepackage{xcolor}
\usepackage{fullpage}
\usepackage{bm}
\usepackage{cite}
\usepackage{tabularx}

\vfuzz2pt 
\hfuzz2pt 

\newlength{\defbaselineskip}
\setlength{\defbaselineskip}{\baselineskip}
\newcommand{\setlinespacing}[1]%
{\setlength{\baselineskip}{#1 \defbaselineskip}}

\parindent 0em
\parskip 2ex


\theoremstyle{plain}
\newtheorem{theorem}{Theorem}[section]
\newtheorem{lemma}[theorem]{Lemma}
\newtheorem{proposition}[theorem]{Proposition}
\newtheorem{corollary}[theorem]{Corollary}

\theoremstyle{definition}

\newtheorem{assumption}{Assumption}

\theoremstyle{remark}
\newtheorem{remark}[theorem]{Remark}

\numberwithin{equation}{section}

\DeclareMathOperator*{\esssup}{ess\,sup}


\allowdisplaybreaks

\bibliographystyle{plain}

\begin{document}

\title{Mean Field Exponential Utility Game: A Probabilistic Approach\thanks{The first and the third authors gratefully acknowledge financial support from the Singapore MOE AcRF grant R-146-000-271-112.  The second author gratefully acknowledges the PhD scholarship from NUS and financial support from the Centre for Quantitative Finance, NUS. In addition, the third author also received support from the NSFC grant 11871364.}}
\author{Guanxing Fu\footnote{Department of Mathematics, National University of Singapore; email: fuguanxing725@gmail.com} \qquad Xizhi Su\footnote{Department of Mathematics, National University of Singapore; email: suxizhi@u.nus.edu} \qquad Chao Zhou\footnote{Department of Mathematics, Institute of Operations Research \& Analytics and Suzhou Research Institute, National University of Singapore; email: matzc@nus.edu.sg}}

\maketitle

\begin{abstract}
We study an $N$-player and a mean field exponential utility game. Each player manages two stocks; one is driven by an individual shock and the other is driven by a common shock. Moreover, each player is concerned not only with her own terminal wealth but also with the relative performance of her competitors. We use the probabilistic approach to study these two games. We show the unique equilibrium of the $N$-player game and the mean field game can be characterized by a novel multi-dimensional FBSDE with quadratic growth and a novel mean-field FBSDEs, respectively. The well-posedness result and the convergence result are established. 
\end{abstract}

{\bf Keywords:}{ mean-field game, utility game,  multi-dimensional quadratic FBSDE, mean field FBSDE}

{\bf Mathematics Subject Classification (2010): }{60H10, 91A15, 91G80} 

{\bf JEL Classification: }{G11, C73}

\section{Introduction}
In this paper we consider the following $N$-player exponential utility game 
\begin{equation}\label{eq:intro-N-game}
	\left\{\begin{split}
			&~\mathbb E\left[-e^{-\alpha^i(X^i_T-\theta^i\overline X^{-i}_T)}\right]\rightarrow\max,\\
			&~\textrm{subject to }dX^i_t=\pi^i_t\,\frac{dS^i_t}{S^i_t}+\pi^{i0}_t\,\frac{dS^{i0}_t}{S^{i0}_t},\quad X^i_0=x^i,\\
			&~i=1,\cdots,N,
	\end{split}\right.
\end{equation}
as well as its mean field game (MFG) correspondence
\begin{equation}\label{eq:intro-MFG}
	\left\{\begin{split}
1.&~\textrm{Fix }\mu.\\
2.&~\mathbb E\left[-e^{-\alpha(X_T-\theta\mu)}\right]\rightarrow\max,\\
&~\textrm{subject to }dX_t=\pi_t\,\frac{dS_t}{S_t}+\pi^0_t\,\frac{dS^0_t}{S^0_t},\quad X_0=x.\\
3.&~\mu=Law(X^*_T) \textrm{ with }X^*\textrm{ the optimal state}.
\end{split}\right.
\end{equation}
In \eqref{eq:intro-N-game} $X^i$ is the wealth process of player $i$, $S^i$ and $S^{i0}$ are the price dynamics for the stocks managed by player $i$,  driven by an individual shock and a common shock, respectively, and $\pi^i$ and $\pi^{i0}$ are the amount of portfolios invested in these two stocks, respectively. For simplicity, we assume the interest rate is $0$. In \eqref{eq:intro-N-game}, each player is not only concerned with her own terminal wealth $X^i_T$, but also concerned with the relative performance of her competitors $(X^i-\overline X^{-i})$, with $\overline X^{-i}=\frac{1}{N-1}\sum_{j\neq i}X^j$ being the average wealth of player $i$'s competitors. The random variable $\theta^i$ valued in $[0,1]$ is the relative performance factor for player $i$; player $i$ is more concerned about the relative performance when $\theta^i$ is larger. Similar problems have been considered in Espinosa \& Touzi \cite{ET-2015}, Frei \cite{Frei2014}, Frei \& dos Reis \cite{FR-2011}, Gu\'eant, Lasry \& Lions \cite{GLL-2011} and Lacker \& Zariphopoulou \cite{LZ-2019}. In \cite{ET-2015,Frei2014,FR-2011}, it is assumed that all players manage common stocks, i.e., $\pi^i\equiv 0$ and $S^{i0}=S$ for all $i$,  with various trading constraints. In particular, using a BSDE approach \cite{ET-2015} shows the existence and uniqueness of a Nash equilibrium for unconstraint agents with general utility and constrained agents with exponential utility in Black-Scholes model (deterministic return rate and volatility). Moreover, a convergence result as the number of players goes to infinity is established as well. 
\cite{FR-2011} considers a similar problem as \cite{ET-2015} and shows when the equilibrium does not exist by constructing counterexamples, where the BSDE characterizing the equilibrium has no solution. In \cite{Frei2014}, a similar problem as in \cite{ET-2015,FR-2011} is considered to illustrate the notion of split solutions of BSDEs.
In \cite{GLL-2011}, a static competitative Markowitz model is studied. In \cite{LZ-2019},  each player is assumed to trade one  stock that is correlated with the stocks managed by the competitors. Under this framework an $N$-player game and an MFG are analyzed in a constant setting, i.e., all the coefficients are assumed to be independent of time. Recently, dos Reis \& Platonov in \cite{Reis2020} study a similar MFG under relative performance with forward utilities, also under constant setting. 

 In this paper, using a probabilistic approach we analyze \eqref{eq:intro-N-game} and \eqref{eq:intro-MFG} in a general non-Markovian setting, which is beyond the constant setting. Unlike \cite{ET-2015,FR-2011,Frei2014}, the trading constraint is not considered here. We characterize the unique equilibrium of \eqref{eq:intro-N-game} in terms of a novel multi-dimensional FBSDE with quadratic growth and possibly unbounded terminal condition. The unique equilibrium of the MFG correspondence \eqref{eq:intro-MFG} is characterized in terms of a novel mean field FBSDE. Note that we obtain the FBSDE characterization by using martingale optimality principle while \cite{ET-2015} and \cite{FR-2011} obtain the BSDE characterization by establishing a dynamic programming principle combined with martingale representation and martingale optimality principle. Table \ref{table:comparison} compares our model with some closest literatures. 
The contribution in this paper is three-fold. First, we find the unique Nash equilibrium of the $N$-player game \eqref{eq:intro-N-game} by establishing the well-posedness result of a novel multi-dimensional FBSDE with quadratic growth. Moreover, a convergence result is established to show the equilibrium of the $N$-player game \eqref{eq:intro-N-game} converges to the equilibrium of the MFG correspondence \eqref{eq:intro-MFG}. Second, under a mild assumption on the relative performance factor $\theta$, we solve the MFG \eqref{eq:intro-MFG}. Moreover, when the assets' dynamics of all players are independent, the MFG \eqref{eq:intro-MFG} can be solved with more general $\theta$, which is allowed to be measurable to the largest $\sigma$-algebra. In this case, we solve \eqref{eq:intro-MFG} under a {weak interaction} assumption, that is, when $\theta$ is small. In particular, we solve a mean field FBSDE and it has its own interest to establish the global well-posedness result. Finally, when the return rates are independent of the noise, we obtain the equilibria for both the $N$-player game and the MFG in closed forms, which are beyond constant settings as in \cite{LZ-2019} and deterministic settings as in \cite{ET-2015}. Our equilibrium formulas for the $N$-player game and the MFG share similar structures; the investment strategy for the stock driven by the individual shock is of Merton type and the investment strategy for the stock driven by the common shock is a weighted sum of a Merton portfolio and the aggregation of the competitors' strategies.

\begin{table}[h] \label{table:comparison}
	\centering
	\caption{Comparison with Utility Games Closest to This Paper}
	\begin{tabular}{p{4.4cm}|p{4cm}|p{3cm}|p{2.5cm}} 

		\hline
		\hline
		Literature & Number of Stock Types & Characterization & Non-Markovian Setting \\ 
		\hline 
		Espinosa \& Touzi \cite{ET-2015}& 1 common type & BSDE & Yes/No\\
		\hline
		Frei \& dos Reis \cite{FR-2011}& 1 common type & BSDE & Yes\\
		\hline
		Lacker \& Zariphopoulou \cite{LZ-2019} & 1 correlated stock & PDE & No\\
		\hline
	    This paper & 2 correlated stocks & FBSDE & Yes\\
		\hline
		\hline
	\end{tabular}
\end{table}

Throughout we work on the  filtered probability space $(\Omega,\mathbb P,\mathbb G)$, on which a sequence of independent Brownian motions $\{W,W^0,\{W^n\}_n\}$ are defined. We will use $W^0$ to characterize the common shock of the stock price for all players, use $W^i$ to characterize the private shock of the stock price for player $i$ in the $N$-player game and use $W$ to characterize the private shock of the stock price for the representative player in the MFG. Denote by $\mathbb F^{(N)}$ the filtration generated by $(W^1,\cdots,W^N,W^0)$, by $\mathbb F^i$ the filtration generated by $(W^i,W^0)$, by $\mathbb F$ the filtration generated by $(W,W^0)$ and by $\mathbb F^{0}$ the filtration generated by $W^0$. All filtrations are understood in a completion sense. In the next two subsections, we introduce the $N$-player game and the MFG in detail.

\subsection{The $N$-Player Utility Game}\label{subsec:N-game}
 Let the price dynamics for the two stocks managed by player $i$ satisfy
\begin{equation}\label{price-N-i}
\frac{dS^i_t}{S^i_t}=b^i_t\,dt+\sigma^i_t\,dW^i_t\quad\textrm{ and }\quad\frac{dS^{i0}_t}{S^{i0}_t}=b^{i0}_t\,dt+\sigma^{i0}_t\,dW^0_t
\end{equation}
where $(b^i,b^{i0})$ and $(\sigma^i,\sigma^{i0})$ are the random return rates and volatilities of the two stocks and  $\overline W^i:=(W^i,W^0)^\top$. Assume the volatilities $\sigma^i$ and $\sigma^{i0}$ are non-degenerate. Let $\pi^i$ and $\pi^{i0}$ be the amount of money invested into the two stocks. The wealth process of player $i$ follows
\begin{equation}\label{wealth-exp-N-i}
	\begin{split}
dX^i_t=\pi^i_t(b^i_t\,dt+\sigma^i_tdW^i_t)+\pi^{i0}_t(b^{i0}_t\,dt+\sigma^{i0}_tdW^0_t)= \overline \pi^i_t(\overline b^i_t\,dt+\,d\overline W^i_t),\quad X^i_0=x^i,
	\end{split}
\end{equation}
where 
\begin{equation*}
	\begin{split}
	\overline \pi^i=(\sigma^i\pi^i,\sigma^{i0}\pi^{i0})\quad\textrm{ and }\quad
	\overline b^i=(b^i/\sigma^i,b^{i0}/\sigma^{i0})^\top. 
	\end{split}
\end{equation*}
Without loss of generality, from now on we assume $\sigma^i=\sigma^{i0}=1$. 

 The goal for player $i$ is to maximize the exponential utility with relative concern by choosing $(\pi^i,\pi^{i0})$ or $\overline\pi^i$
\begin{equation}\label{N-cost-i}
	\mathbb E\left[-e^{-\alpha^i(X^i_T-\theta^i\overline X^{-i}_T)}\right]\rightarrow\max.
\end{equation}
The following heuristic argument shows that the unique Nash equilibrium of \eqref{wealth-exp-N-i}-\eqref{N-cost-i} is the vector   
\[
	(\overline \pi^{1,*},\cdots,\overline \pi^{i,*}),
\] 
where for each $i=1,\cdots,N$
\begin{equation}\label{candidate-equilibrium-N}
\overline \pi^{i,*}=\overline Z^i+\frac{(\overline b^i)^\top}{\alpha^i}
\end{equation}
and $(X^i,Y^i,\overline Z^i=(Z^{ii},Z^{i0}),(Z^{ij})_{j\neq i})_{i=1,\cdots,N}$ is the unique solution to the following multi-dimensional FBSDE system
\begin{equation}\label{CN:exp-FBSDE-N}
\left\{\begin{split}
X^i_t=&~x^i+\int_0^t\left(\overline Z^{i}_s\overline b^i_s+\frac{|\overline b^i_s|^2}{\alpha^i}\right)\,ds+\int_0^t\left(\overline Z^{i}_s+\frac{(\overline b^i_s)^\top}{\alpha^i}\right)\,d\overline W^i_s,\\
Y^i_t=&~\theta^i\overline X^{-i}_T-\int_t^T\left(\overline Z^{i}_s\overline b^i_s+\frac{|\overline b^i_s|^2}{2\alpha^i}-\frac{\alpha^i}{2}\sum_{j\neq i}(Z^{ij}_s)^2\right)\,ds-\int_t^T\overline Z^{i}_s\,d\overline W_s^i-\sum_{j\neq i}\int_t^TZ^{ij}_s\,dW^j_s,\\
i=&~1,\cdots,N.
\end{split}\right.
\end{equation}
Fixing the competitors' strategies $(\overline \pi^j)_{j\neq i}$ and letting $(X^j)_{j\neq i}$ be the corresponding wealth, following the martingale optimality principle in Hu, Imkeller \& M\"uller \cite{HIM-2005} (see also Mania \& Schweizer \cite{MS-2005} and Rouge \& El Karoui \cite{Rouge2000}) we construct a family of stochastic processes $R^{i,\overline \pi}$ such that 
\begin{itemize}
	\item $R^{i,\overline \pi}$ is a supermartingale for all $\overline \pi$ and $R^{i,\overline \pi^{i,*}}$ is a martingale for some $\overline \pi^{i,*}$,
	\item $R^{i,\overline \pi}_0=R_0$ for all $\overline \pi$  and
	\item $R^{i,\overline \pi}_T=-e^{-\alpha^i(X^i_T-\theta^i\overline X^{-i}_T)}$.
\end{itemize}
Construct $R^{i,\overline \pi}$ in terms of a stochastic process $Y^i$ via $R^{i,\overline \pi}=-e^{-\alpha^i(X^{i,\overline \pi}-Y^i)}$.
From the above three items, it holds that $\mathbb E[-e^{-\alpha^i(X^{i,\overline \pi}_T-\theta^i\overline X^{-i}_T)}]\leq -e^{-\alpha^i(x^i-Y^i_0)}=\mathbb E[-e^{-\alpha^i(X^{i,\overline \pi^{i,*}}_T-\theta^i\overline X^{-i}_T)}]$ for all $\overline \pi$. Thus, $\overline \pi^{i,*}$ is optimal for player $i$'s optimization problem. Assume the diffusion process $Y^i$ admit the following expression
\[
	dY^i_t=f^i_t\,dt+\overline Z^{i}_t\,d\overline W^i_t+\sum_{j\neq i}Z^{ij}_t\,dW^j_t.
\]
We will find $f^i$ such that $Y^i$ satisfies \eqref{CN:exp-FBSDE-N} and the constructed $R^{i,\overline \pi}$ satisfies the above three items. By the construction,
\begin{equation*}
	\begin{split}
	R_t^{i,\overline \pi}=&~-e^{-\alpha^i\left(x^i-Y^i_0+\int_0^t(\overline \pi_s\overline b^i_s-f^i_s)\,ds+\int_0^t(\overline \pi_s-\overline Z^i_s)\,d\overline W^i_s-\sum_{j\neq i}\int_0^tZ^{ij}_s\,dW^j_s\right)}\\
	=&~-e^{-\alpha^i(x^i-Y^i_0)}e^{-\alpha^i\int_0^t(\overline \pi_s-\overline Z^i_s)\,d\overline W^i_s-\frac{(\alpha^i)^2}{2}\int_0^t|\overline \pi_s-\overline Z^i_s|^2\,ds+\alpha^i\sum_{j\neq i}\int_0^tZ^{ij}_s\,dW^j_s-\frac{(\alpha^i)^2}{2}\sum_{j\neq i}\int_0^t(Z^{ij}_s)^2\,ds}\\
	&~\times e^{-\alpha^i\int_0^t(\overline \pi_s\overline b^i_s-f^i_s)\,ds+\frac{(\alpha^i)^2}{2}\int_0^t|\overline \pi_s-\overline Z^i_s|^2\,ds+\frac{(\alpha^i)^2}{2}\sum_{j\neq i}\int_0^t(Z^{ij}_s)^2\,ds}\\
	=&~M^{i,\overline \pi}_tA^{i,\overline \pi}_t,
	\end{split}
\end{equation*}
where
\[
	M_t^{i,\overline\pi}=-e^{-\alpha^i(x^i-Y^i_0)}e^{-\alpha^i\int_0^t(\overline \pi_s-\overline Z^i_s)\,d\overline W^i_s-\frac{(\alpha^i)^2}{2}\int_0^t|\overline \pi_s-\overline Z^i_s|^2\,ds+\alpha^i\sum_{j\neq i}\int_0^tZ^{ij}_s\,dW^j_s-\frac{(\alpha^i)^2}{2}\sum_{j\neq i}\int_0^t(Z^{ij}_s)^2\,ds}
\]
and
\[
	A_t^{i,\overline\pi}=e^{-\alpha^i\int_0^t(\overline \pi_s\overline b^i_s-f^i_s)\,ds+\frac{(\alpha^i)^2}{2}\int_0^t|\overline \pi_s-\overline Z^i_s|^2\,ds+\frac{(\alpha^i)^2}{2}\sum_{j\neq i}\int_0^t(Z^{ij}_s)^2\,ds}.
\]
Take it for grant that the local martingale $M^{i,\overline \pi}$ is a true martingale. To make $R^{i,\overline \pi}$ a supermartingale for all $\overline\pi$ and a maringale for some $\overline\pi^{i,*}$, it is sufficient to let $A^{i,\overline \pi}$ be non-decreasing for all $\overline \pi$ and $1$ for some $\overline \pi^{i,*}$. This is equivalent to letting the integrand of $\log A^{i,\overline\pi}$ be non-negative for all $\overline \pi$ and $0$ for some $\overline \pi^{i,*}$.  Indeed,
\begin{equation*}
	\begin{split}
	&~-\alpha^i(\overline \pi\overline b^i-f^i)+\frac{(\alpha^i)^2}{2}|\overline \pi-\overline Z^i|^2+\frac{(\alpha^i)^2}{2}\sum_{j\neq i}(Z^{ij})^2\\
	=&~\frac{(\alpha^i)^2}{2}\left|\overline \pi-\overline Z^i-\frac{(\overline b^i)^\top}{\alpha^i}\right|^2-\frac{|\overline b^i|^2}{2}-\alpha^i\overline Z^i\overline b^i+\alpha^if^i+\frac{(\alpha^i)^2}{2}\sum_{j\neq i}(Z^{ij})^2,
	\end{split}
\end{equation*}
which implies \eqref{candidate-equilibrium-N} and
\[
	 f^i=\frac{|\overline b^i|^2}{2\alpha^i}+\overline Z^i\overline b^i-\frac{\alpha^i}{2}\sum_{j\neq i}(Z^{ij})^2.
\]
Thus, the candidate Nash equilibrium $(\overline\pi^{1,*},\cdots,\overline \pi^{N,*})$ and the value function $-e^{-\alpha^i(x^i-Y^i_0)}$ for player $i$ are characterized by the multi-dimensional (coupled) FBSDE with quadratic growth and unbounded terminal condition \eqref{CN:exp-FBSDE-N}, which is in a non-Markovian setting.

The literature on multi-dimensional FBSDEs with quadratic growth is sparse. Among the rare results, Fromm \& Imkeller \cite{Fromm2013} and Kupper, Luo \& Tangpi \cite{Kupper2019} considered Markovian cases, Fromm \& Imkeller \cite{Fromm2017} studied a ``non-Markovian'' case arising in utility maximization, where the randomness comes from an exogenous diffusion, Herdegen, Muhle-Karbe \& Possama\"i \cite{Herdegen2019} studied a two-dimensional non-Markovian case arising in equilibrium pricing with transaction cost, and Luo \& Tangpi \cite{Luo2017} considered a non-Markovian case but with a bounded terminal condition. 

Because of the unboundedness of the terminal condition $\theta^i\overline X^{-i}_T$ and the non-Markovian nature of \eqref{CN:exp-FBSDE-N}, it is difficult to solve \eqref{CN:exp-FBSDE-N} directly by the methods in the above literature. Based on the specific structure of our system, using a transformation, we show \eqref{CN:exp-FBSDE-N} is equivalent to a non-Markovian multi-dimensional BSDE with quadratic growth and bounded terminal condition. Moreover, in addition to the well-posedness result we also expect \eqref{CN:exp-FBSDE-N} to converge to the mean field system characterizing the equilibrium of the MFG correspondence. 
Motivated by this expectation, we solve the resulting  multi-dimensional BSDE by considering a sequence of benchmark mean field BSDEs firstly and then by applying BMO and fixed point analysis, which is inspired by Tevzadze \cite{Tevzadze2008}, to the difference of the multi-dimensional BSDE and the benchmark ones; see Section \ref{sec:exp-N} for details. 


Before turning to the next section to introduce the MFG in detail, we should mention that
in addition to the aforementioned \cite{ET-2015,Frei2014,FR-2011},  non-Markovian multi-dimensional quadratic BSDEs have also been studied by Bielagk, Lionnet \& dos Reis \cite{BLDR-2017}, Elie \& Possama\"i \cite{Elie2019}, Harter \& Richou \cite{HR-2019}, Hu \& Tang \cite{HT-2016}, Jamneshan, Kupper \& Luo \cite{Jamneshan2017}, Kardaras, Xing \& \v Zitkovic \cite{Kardaras2015}, Kramkov \& Pulido \cite{Kramkov2016}, Nam \cite{Nam-2019} among others. But none of these results covers ours, especially in veiw of that our multi-dimensional system converges to a mean field system.

\subsection{The Mean Field Utility Game}
Our mean field utility game is in the framework of MFGs as introduced in Huang, Malhame \& Caines \cite{HMC-2006} and Lasry \& Lions \cite{LL-2007} (see also Bensoussan, Frehse \& Yam \cite{BFY-2013}, Cardaliaguet \cite{Cardaliaguet-2013} and Carmona \& Delarue \cite{CD-2018a,CD-2018b}). The main idea is to decouple local dynamics by global dynamics and consider a representative player's optimization problem by solving only one equation instead of a couple multi-dimensional system. The equilibrium is established by solving a fixed point problem. The probabilistic method we will apply is inspired by Carmona \& Delarue  \cite{CD-2013} and \cite{CD-2018a}, although we incoporate FBSDEs through the martingale optimality principle rather than the maximum principle.

In the MFG correspondence, the price dynamics for the two stocks managed by the representative player follow
\begin{equation}\label{price-MFG}
\frac{dS_t}{S_t}=b_t\,dt+\,dW_t\quad\textrm{ and }\quad\frac{dS^0_t}{S^0_t}=b^0_t\,dt+\,dW^0_t,
\end{equation}
where $b$ and $b^0$ are randon return rates and $\overline W:=(W,W^0)$.
Let $\pi$ and $\pi^0$ be the amount of money invested into the two stocks, respectively. 
The dynamics of the representative player's wealth follows
\begin{equation}\label{wealth-exponential-common-noise}
dX_t=\pi_t(b_t\,dt+\,dW_t)+\pi^0_t(b^0_t\,dt+\,dW^0_t)=\overline \pi_t(\overline b_t\,dt+\,d\overline W_t),
\end{equation}
where
\begin{equation*}
\begin{split}
\overline \pi=(\pi,\pi^{0})\quad\textrm{ and }\quad
\overline b=(b,b^{0})^\top.
\end{split}
\end{equation*}
Again, the volatilities in \eqref{price-MFG} are assumed to be $1$ for simplicity. 
We consider the mean field utility game: 
\begin{equation}\label{exp-MFG}
\left\{\begin{split}
1.&~\textrm{Fix a }\mathcal F^0_T\textrm{ adapted }\mu;\\
2.&~\textrm{Solve the optimization problem: }\max_{\overline \pi}\mathbb E\left[-e^{-\alpha(X_T-\theta\mu)}\right]\textrm{ subject to }\eqref{wealth-exponential-common-noise};\\
3.&~\textrm{Search for the fixed point }\mu=\mathbb E[X_T^*|\mathcal F^0_T],\textrm{ where }X^*\textrm{ is the optimal wealth from 2.}
\end{split}\right.
\end{equation}
The probabilistic approach of MFG and the argument in Hu, Imkeller \& M\"uller \cite[Section 2]{HIM-2005} yield the following mean field FBSDE
\begin{equation}\label{FBSDE-exponential-common-noise}
\left\{\begin{split}
X_t=&~\mathcal X+\int_0^t\left(\overline Z_s+\frac{\overline b^\top_s}{\alpha}\right)(\overline b_s\,ds+\,d\overline W_s),\\
Y_t=&~\theta\mathbb E[X_T|\mathcal F^0_T]-\int_t^T\left(\overline Z_s\overline b_s+\frac{|\overline b_s|^2}{2\alpha}\right)\,ds-\int_t^T\overline Z_s\,d\overline W_s,
\end{split}\right.
\end{equation}
where $\overline Z=(Z,Z^0)$ and the candidate optimal strategy is given by
\[
	\overline \pi^*=\overline Z+\frac{\overline b^\top}{\alpha}.
\]
In Section \ref{sec:MFG-exp-CN} we will solve \eqref{FBSDE-exponential-common-noise} by using a similar transformation as that to solve \eqref{CN:exp-FBSDE-N}, but under much weaker assumptions than those under which \eqref{CN:exp-FBSDE-N} is solvable. This provides sufficient reason to consider the MFG \eqref{exp-MFG} even if the corresponding $N$-player game \eqref{wealth-exp-N-i}-\eqref{N-cost-i} is solvable and the convergence result is established.

 Moreover, when the dynamics of the stocks managed by each player is independent, that is each player is assumed to manage only one stock driven by the individual shock $W$ or $\pi^0\equiv 0$, the competition factor can be assumed to be measurable w.r.t. the largest $\sigma$-algebra in the resulting MFG
\begin{equation}\label{exp-MFG-IA}
\left\{\begin{split}
1.&~\textrm{Fix a }\mu\in\mathbb R;\\
2.&~\textrm{Solve the optimization problem: }\max_{\pi}\mathbb E\left[-e^{-\alpha(X_T-\theta\mu)}\right]\\
&~\textrm{subject to }\eqref{wealth-exponential-common-noise}\textrm{ with }\pi^0=0;\\
3.&~\textrm{Search for the fixed point }\mu=\mathbb E[X_T^*],\textrm{ where }X^*\textrm{ is the optimal wealth from 2.}
\end{split}\right.
\end{equation}
The resulting mean field FBSDE follows
\begin{equation}\label{FBSDE-exp-IA}
\left\{\begin{split}
X_t=&~\mathcal X+\int_0^t\left( Z_s+\frac{ b_s}{\alpha}\right)(b_s\,ds+\,dW_s),\\
Y_t=&~\theta\mathbb E[X_T]-\int_t^T\left( Z_s b_s+\frac{| b_s|^2}{2\alpha}\right)\,ds-\int_t^T Z_s\,dW_s,
\end{split}\right.
\end{equation}
which cannot be transformed into a mean field BSDE like \eqref{CN:exp-FBSDE-N} and \eqref{FBSDE-exponential-common-noise} because $\theta$ is measurable w.r.t. the largest $\sigma$-algebra. Although in a linear form, the mean field FBSDE \eqref{FBSDE-exp-IA} does not appear in the literature due to its random and non-monotone coefficients as well as the appearance of $Z$ in the drift. In particular, the continuation method to obtain global solutions in \cite{ARY-2019,BYZ-2015,CD-2015,FGHP-2018,FH-2018}  does not work here. 
The solvability of \eqref{FBSDE-exp-IA} on the whole interval $[0,T]$ has its own interest. 
Our approach is motivated by Ma, Wu, Zhang \& Zhang \cite{MWZZ-2015}, where there is no mean field term. First, we solve \eqref{FBSDE-exp-IA} on a short time interval by a contraction argument. Second, we extend the local solution to the global one by establishing the existence result of a decoupling field with Lipschitz property. In order to do so, we need to analyze the associated characteristic (mean field) BSDE; see Section \ref{sec:MFG-IA-exp} for details.

The rest of this paper is organized as follows. In Section \ref{sec:exp-N}, we solve the multi-dimensional FBSDE with quadratic growth \eqref{CN:exp-FBSDE-N} and the $N$-player game \eqref{wealth-exp-N-i}-\eqref{N-cost-i}. The convergence result is also established there. Under mild assumptions, in Section \ref{sec:MFG-exp-CN} and Section \ref{sec:MFG-IA-exp} we solve the MFG \eqref{exp-MFG} and the MFG \eqref{exp-MFG-IA}, respectively. In Section \ref{sec:example}, explicitly solvable examples and the financial interpretation are provided.
\paragraph{Notation of Spaces.}  Let $\mathbb K$ be a generic filtration and $\mathbb I\subset [0,T]$ be an interval. Define 
\[
	\mathbb L_{\mathbb K}(\mathbb I)=\left\{ P:\Omega\times\mathbb I\rightarrow\mathbb R: P\textrm{ is progressively measurable w.r.t. }\mathbb K \right\},
\]
\[ 
	\mathbb L^2_{\mathbb K}(\mathbb I)=\left\{ P\in \mathbb L_{\mathbb K}(\mathbb I):  \mathbb E\left[\int_0^T|P_t|^2\,dt\right]<\infty \right\},
\]
\[
		\mathbb L^\infty_{\mathbb K}(\mathbb I)=\left\{ P\in\mathbb L_{\mathbb K}(\mathbb I):\|P\|_{\mathbb I,\infty}:= \esssup_{(\omega,t)\in\Omega\times\mathbb I}|P_t(\omega)|<\infty \right\},
\]
\begin{equation*}
	\begin{split}
\mathbb S^p_{\mathbb K}(\mathbb I)=\left\{ P\in \mathbb L_{\mathbb K}(\mathbb I): P\textrm{ has continuous trajectories and }\mathbb E\left[\sup_{t\in\mathbb I}|P_t|^p\right]<\infty \right\},\quad p=1,2,
	\end{split}
\end{equation*}
and
\begin{equation*}
\begin{split}
\mathbb S^\infty_{\mathbb K}(\mathbb I)=&~\left\{ P\in\mathbb L^\infty_{\mathbb K}(\mathbb I): P\textrm{ has continuous trajectories} \right\}.
\end{split}
\end{equation*}
Let $\mathcal T_{\mathbb K,\mathbb I}$ be the space of all-$\mathbb K$ stopping times valued in $\mathbb I$. The BMO space is defined as
\[
	\mathbb H^2_{\mathbb K,BMO}(\mathbb I)=\left\{ P\in\mathbb L_{\mathbb K}(\mathbb I):\|P\|_{\mathbb K,\mathbb I,BMO}:=\esssup_{\omega\in\Omega}\esssup_{\tau\in\mathcal T_{\mathbb K,\mathbb I}}\left(\mathbb E\left[\left.\int_{\mathbb I}|P_t|^2\,dt\right|\mathcal F_\tau\right]\right)^{1/2}<\infty \right\}
\]
 We will use the notation $\mathbb H^2_{\mathbb K,BMO,R}(\mathbb I)$ to denote the subspace of $\mathbb H^2_{\mathbb K,BMO}(\mathbb I)$ with all elements satisfying $\|\cdot\|_{\mathbb K,\mathbb I,BMO}\leq R$.
When $\mathbb I=[0,T]$, we will drop $\mathbb I$ in the notation of spaces and norms. 

Let $ L^2$ denote the space of all random variables that are squarely integrable. For the space of essentially bounded random variables $L^\infty$, we use $\|\cdot\|$ to denote the norm of essential supremum.

\section{$N$-Player Games}\label{sec:exp-N}
This section studies the solvability of the $N$-player game \eqref{wealth-exp-N-i}-\eqref{N-cost-i}. Firstly, we solve the multi-dimensional FBSDE \eqref{CN:exp-FBSDE-N} and then we verify the candidate \eqref{candidate-equilibrium-N} is indeed the Nash equilibrium of \eqref{wealth-exp-N-i}-\eqref{N-cost-i}. The following assumptions are in force in this section. 
\begin{assumption}[Assumptions of $N$-Player Games]\label{ass:N-player}
 Let $\mathscr{G}$ be a $\sigma$-algebra  that is independent of all Brownian motions.
\begin{itemize}
	\item[1.]  For each $i$, the tuple $(x^i,\theta^i,\alpha^i)$ is an independent $\mathscr{G}$ random variable, where $\theta^i$ is valued in $[0,1)$ and $\alpha^i$ is positively valued.
	\item [2.] The sequence $\{\overline b^i\}_i$ is uniformly bounded and progressively measurable w.r.t. $\mathbb F^i\vee\mathscr G$.
\end{itemize}
\end{assumption}
From now on, let $\mathscr G^{(N)}:=(\mathscr G^{(N)}_t)_{0\leq t\leq T}:=\mathbb F^{(N)}\vee\mathscr G$, where we recall $\mathbb F^{(N)}$ is the filtration generated by all Brownian motions $(W^1,\cdots,W^N,W^0)$.

\begin{remark}
	To solve the $N$-player game, a similar assumption on $\theta^i$ also appears in \cite{ET-2015,FR-2011}, where it is assumed that $\Pi_{i=1}^N\theta^i<1$. In order to show the convergence from the $N$-player game to MFG, \cite{ET-2015} further assumes $\theta^i=\theta^j<1$ for all $i\neq j$. \cite{FR-2011} provides a counterexample to show there might exist no equilibria when $\theta^i=1$. To solve the $N$-player game, our approach is to compare the multi-dimensional system and the mean field system, so we assume $\theta^i<1$ for all $i$, which is less general than $\Pi_{i=1}^n\theta^i<1$ but we allow $\theta^i$ to be heterogeneous across different players even in the MFG, like \cite{LZ-2019}. 
	
	 Moreover, in the $N$-player game, when $(x^i,\theta^i,\alpha^i)$ are random variables player $i$'s optimization is equivalent to the optimization when $(x^i,\theta^i,\alpha^i)$ are scalars. But this assumption makes a difference and is more general in MFG.  So we keep it from the very beginning for the simplicity of the statement.
\end{remark}

\subsection{Solvability of \eqref{CN:exp-FBSDE-N}}\label{sec:wellposedness-N-FBSDE}

Because of the possibly unbounded terminal condition $\theta^i\overline X^{-i}_T$, it is difficult to solve \eqref{CN:exp-FBSDE-N} directly. In the following we will transform the FBSDE \eqref{CN:exp-FBSDE-N} into a multi-dimensional quadratic BSDE with a bounded terminal condition. Note that a similar transformation also appears in \cite{ET-2015}, where the original equation is transformed into a BSDE with unbounded terminal condition while the terminal condition of our resulting BSDE is bounded. 

From \eqref{CN:exp-FBSDE-N} we have
\begin{equation}\label{CN:exp-N-multi-BSDE1}
\left\{\begin{split}
Y^i_t=&~\theta^i\overline{x}^{-i}+\frac{\theta^i}{N-1}\sum_{j\neq i}\int_0^T\left(\overline Z^{j}_s\overline b^j_s+\frac{|\overline b^j_s|^2}{\alpha^j}\right)\,ds+\frac{\theta^i}{N-1}\sum_{j\neq i}\int_0^T\left(Z^{jj}_s+\frac{b^j_s}{\alpha^j}\right)\,dW^j_s\\
&~+\frac{\theta^i}{N-1}\sum_{j\neq i}\int_0^T\left(Z^{j0}_s+\frac{b^{j0}_s}{\alpha^j}\right)\,dW^0_s\\
&~-\int_t^T\left(\overline Z^{i}_s\overline b^i_s+\frac{|\overline b^i_s|^2}{2\alpha^i}-\frac{\alpha^i}{2}\sum_{j\neq i}(Z^{ij}_s)^2\right)\,ds-\int_t^TZ^{ii}_s\,dW^i_s-\sum_{j\neq i}\int_t^TZ^{ij}_s\,dW^j_s,\\
&~-\int_t^TZ^{i0}_s\,dW^0_s\\
i=&~1,\cdots,N.
\end{split}\right.
\end{equation}
Now we divide the intergrals $\int_0^T\cdots\,ds$, $\int_0^T\cdots\,dW^j_s$ and $\int_0^T\cdots\,dW^0_s$ into two parts, $\int_0^t\cdots\,ds$, $\int_0^t\cdots\,dW^j_s$, $\int_0^t\cdots\,dW^0_s$ and $\int_t^T\cdots\,ds$, $\int_t^T\cdots\,dW^j_s$, $\int_t^T\cdots\,dW^0_s$. We make the latter part absorbed into the drift and volatility in \eqref{CN:exp-N-multi-BSDE1}, respectively. Define \begin{equation*}
	\begin{split}
\widetilde Y^i_\cdot:=&~Y^i_\cdot-\frac{\theta^i}{N-1}\sum_{j\neq i}\int_0^\cdot\left(\overline Z^{j}_s\overline b^j_s+\frac{|\overline b^j_s|^2}{\alpha^j}\right)\,ds-\frac{\theta^i}{N-1}\sum_{j\neq i}\int_0^\cdot\left(Z^{jj}_s+\frac{b^j_s}{\alpha^j}\right)\,dW^j_s\\
&~-\frac{\theta^i}{N-1}\sum_{j\neq i}\int_0^\cdot\left(Z^{j0}_s+\frac{b^{j0}_s}{\alpha^j}\right)\,dW^0_s.
	\end{split}
\end{equation*} Then $\widetilde Y^i$ satisfies 
\begin{equation}\label{CN:exp-N-multi-BSDE2}
\left\{\begin{split}
d\widetilde Y^i_t=&~\left(\overline Z^{i}_t \overline b^i_t+\frac{|\overline b^i_t|^2}{2\alpha^i}-\frac{\alpha^i}{2}\sum_{j\neq i}(Z^{ij}_t)^2-\frac{\theta^i}{N-1}\sum_{j\neq i}\left(\overline Z^{j}_t\overline b^j_t+\frac{|\overline b^j_t|^2}{\alpha^j}\right)\right)\,dt\\
&~+Z^{ii}_t\,dW^i_t+\sum_{j\neq i}\left(Z^{ij}_t-\frac{\theta^i}{N-1}\left(Z^{jj}_t+\frac{b^j_t}{\alpha^j}\right)\right)\,dW^j_t\\
&~+\left(Z^{i0}_t-\frac{\theta^i}{N-1}\sum_{j\neq i}\left(Z^{j0}_t+\frac{b^{j0}_t}{\alpha^j}\right)\right)\,dW^0_t,\\
\widetilde Y^i_T=&~\theta^i\overline{x}^{-i},\\
i=&~1,\cdots,N.
\end{split}\right.
\end{equation}
For each $i=1,\cdots,N$, let 
\begin{equation}\label{transformation-FB-to-tilde}
\widetilde Z^{ii}=Z^{ii},\quad \widetilde Z^{ij}=Z^{ij}-\frac{\theta^i}{N-1}\left(Z^{jj}+\frac{b^j}{\alpha^j}\right)\textrm{ for }j\neq i,\quad\widetilde Z^{i0}=Z^{i0}-\frac{\theta^i}{N-1}\sum_{j\neq i}\left(Z^{j0}+\frac{b^{j0}}{\alpha^j}\right).
\end{equation}
Let $C^j=Z^{j0}+\frac{b^{j0}}{\alpha^j}$ and $D^j=\widetilde Z^{j0}+\frac{b^{j0}}{\alpha^j}$. The last equality in \eqref{transformation-FB-to-tilde} implies
\[
	D^i=\left(1+\frac{\theta^i}{N-1}\right)C^i-\frac{\theta^i}{N-1}\sum_{j=1}^NC^j,
\]
which further implies
\[
	C^i=\frac{D^i}{1+\frac{\theta^i}{N-1}}+\frac{\frac{\theta^i}{N-1}\sum_{j=1}^NC^j}{1+\frac{\theta^i}{N-1}}.
\]
Taking sum from $1$ to $N$ and rearranging terms, one has
\[
	\left(1-\sum_{j=1}^N\frac{\theta^j}{N-1+\theta^j}\right)\sum_{j=1}^NC^j=\sum_{j=1}^N\frac{D^j}{1+\frac{\theta^j}{N-1}}.
\]
Since it is assumed that $0\leq \theta^j<1$ for all $j$ in Assumption \ref{ass:N-player}, it holds that $1-\sum_{j=1}^N\frac{\theta^j}{N-1+\theta^j}>0$ so one has
\[
	\sum_{j=1}^NC^j=\frac{\sum_{j=1}^N\frac{D^j}{1+\frac{\theta^j}{N-1}}}{	1-\sum_{j=1}^N\frac{\theta^j}{N-1+\theta^j}}.
\]
Thus, we get 
\[
	C^i=\frac{ D^i+\frac{\theta^i}{N-1}  \frac{\sum_{j=1}^N\frac{D^j}{1+\frac{\theta^j}{N-1}}}{1-\sum_{j=1}^N\frac{\theta^j}{N-1+\theta^j}} }{1+\frac{\theta^i}{N-1}},
\]
which implies that
	\[
	Z^{i0}=\frac{\widetilde Z^{i0}+\frac{\theta^i}{N-1}  \frac{\sum_{j=1}^N\frac{\widetilde Z^{j0}}{1+\frac{\theta^j}{N-1}}}{1-\sum_{j=1}^N\frac{\theta^j}{N-1+\theta^j}}+\frac{\theta^i}{N-1}  \frac{\sum_{j=1}^N\frac{\frac{b^{j0}}{\alpha^j}}{1+\frac{\theta^j}{N-1}}}{1-\sum_{j=1}^N\frac{\theta^j}{N-1+\theta^j}}-\frac{\theta^i}{N-1}\frac{b^{i0}}{\alpha^i}}{1+\frac{\theta^i}{N-1}}:=f_{i,N}(\cdot,\widetilde Z^{i0};\widetilde Z^{k0},k\neq i).
	\]
Then, the BSDEs \eqref{CN:exp-N-multi-BSDE2} are transformed into
\begin{equation}\label{CN:exp-N-multi-BSDE3}
\left\{\begin{split}
d\widetilde Y^i_t
=&~\left\{\widetilde Z^{ii}_t b^i_t+b^{i0}_tf_{i,N}(t,\widetilde Z^{i0}_t;\widetilde Z^{k0}_t,k\neq i)+\frac{|\overline b^i_t|^2}{2\alpha^i}-\frac{\alpha^i}{2}\sum_{j\neq i}\left(\widetilde Z^{ij}_t+\frac{\theta^i}{N-1}\left(\widetilde Z^{jj}_t+\frac{b^j_t}{\alpha^j}\right)\right)^2\right.\\
&~\left.-\frac{\theta^i}{N-1}\sum_{j\neq i}\left(b^j_t\widetilde Z^{jj}_t+b^{j0}_tf_{j,N}(t,\widetilde Z^{j0}_t;\widetilde Z^{k0}_t,k\neq j)\right)-\frac{\theta^i}{N-1}\sum_{j\neq i}\frac{|\overline b^j_t|^2}{\alpha^j}\right\}\,dt\\
&~+\widetilde Z^{ii}_t\,dW^i_t+\sum_{j\neq i}\widetilde Z^{ij}_t\,dW^j_t+\widetilde Z^{i0}_t\,dW^0_t\\
\widetilde Y^i_T=&~\theta^i\overline{x}^{-i},\\
i=&~1,\cdots,N.
\end{split}\right.
\end{equation}
In this way, we transform the multi-dimensional FBSDE with a possibly unbounded terminal condition \eqref{CN:exp-FBSDE-N} to the multi-dimensional BSDE with a bounded terminal condition \eqref{CN:exp-N-multi-BSDE3}. Moreover, the solutions to \eqref{CN:exp-FBSDE-N} and \eqref{CN:exp-N-multi-BSDE3} have one to one correspondence. 
To solve \eqref{CN:exp-N-multi-BSDE3}, we first consider the following sequence of benchmark conditional mean field BSDEs
\begin{equation}\label{CN:exp-N-benchmark-BSDE}
\begin{split}
d\breve Y^i_t=&~\left\{b^i_t\breve Z^i_t+\frac{|\overline b^i_t|^2}{2\alpha^i}+b^{i0}_t\left(\breve Z^{i0}_t+\frac{\theta^i}{1-\mathbb E[\theta^i]}\mathbb E[\breve Z^{i0}_t|\mathcal F^0_t]+\frac{\theta^i}{1-\mathbb E[\theta^i]}\mathbb E\left[\left.\frac{b^{i0}_t}{\alpha^i}\right|\mathcal F^0_t\right]\right)\right.\\
&~\left.-\theta^i\mathbb E\left[\left.\frac{|\overline b^i_t|^2}{\alpha^i}\right|\mathcal F^0_t\right]-\theta^i\mathbb E[b^i_t\breve Z^i_t|\mathcal F^0_t]-\theta^i\mathbb E[b^{i0}_t\breve Z^{i0}_t|\mathcal F^0_t]-\frac{\theta^i}{1-\mathbb E[\theta^i]}\mathbb E[\theta^ib^{i0}_t|\mathcal F^0_t]\mathbb E[\breve Z^{i0}_t|\mathcal F^0_t]\right.\\
&~\left.-\frac{\theta^i}{1-\mathbb E[\theta^i]}\mathbb E[\theta^ib^{i0}_t|\mathcal F^0_t]\mathbb E\left[\left.\frac{b^{i0}_t}{\alpha^i}\right|\mathcal F^0_t\right]\right\}\,dt+\breve Z^i_t\,dW^i_t+\breve Z^{i0}_t\,dW^0_t,\quad \breve Y^i_T=\theta^i\overline{x}^{-i}\\
i=&~1,\cdots,N.
\end{split}
\end{equation}
Due to the Lipschitz property of the driver, the well-posedness of \eqref{CN:exp-N-benchmark-BSDE} in $\mathbb S^2\times\mathbb H^2$ is obvious. However, the essential boundedness of $\breve Y^i$ and the BMO property of $\breve Z^{i}$ and $\breve Z^{i0}$ are necessary for the well-posedness of \eqref{CN:exp-N-multi-BSDE3} as well as the convergence to the MFG. This is done by the next lemma.
\begin{lemma}\label{lem:benchmark-BSDE}
	Under Assumption \ref{ass:N-player}, for fixed $i$, for each $R>0$, choosing $\theta^i$ and $\overline b^i$ small enough such that
	\begin{equation}\label{ass-theta-benchmark-BSDE}
	\left\{\begin{split}
	&~10\|b^i\|^2_{\mathscr G^{(N)},BMO}\leq \frac{1}{8},\quad 20\left(1+\frac{\|\theta^i\|^2}{(1-\mathbb E[\theta^i])^2}\right)\|b^{i0}\|^2_{\mathscr G^{(N)},BMO}\leq \frac{1}{8},\\
	&~ \frac{5}{2}\left\|\frac{\overline b^i}{\sqrt{\alpha^i}}\right\|^4_{\mathscr G^{(N)},BMO}\leq \frac{3R^2}{40},\quad\frac{40\|\theta^i\|^2}{(1-\mathbb E[\theta^i])^2}\|b^{i0}\|_{\mathscr G^{(N)},BMO}^2\left\|\frac{b^{i0}}{\alpha^i}\right\|_{\mathscr G^{(N)},BMO}^2\leq \frac{3R^2}{40},\\
	&~ 10\|\theta^i\|^2\left\|\frac{\overline b^i}{\sqrt{\alpha^i}}\right\|^4_{\mathscr G^{(N)},BMO}\leq \frac{3R^2}{40},\quad  \|\theta^i\overline{x}^{-i}\|^2 \leq \frac{3R^2}{40},
	\end{split}\right.
	\end{equation}
	 there exists a unique solution of \eqref{CN:exp-N-benchmark-BSDE} such that
	 \begin{equation}\label{estimate-MF-BSDE-R}
	 	\|\breve Y^i\|_\infty^2+\|\breve Z^i\|^2_{\mathscr G^{(N)},BMO}+\|\breve Z^{i0}\|^2_{\mathscr G^{(N)},BMO}\leq R^2.
	 \end{equation}

\end{lemma}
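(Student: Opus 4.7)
The plan is to split the argument in two parts: first, establish solvability in $\mathbb{S}^2\times\mathbb{H}^2\times\mathbb{H}^2$; second, upgrade to the $L^\infty$/BMO estimate \eqref{estimate-MF-BSDE-R}. The key structural observation is that the driver of \eqref{CN:exp-N-benchmark-BSDE} is \emph{affine}---not quadratic---in $(\breve Z^i,\breve Z^{i0},\mathbb E[\breve Z^i|\mathcal F^0],\mathbb E[\breve Z^{i0}|\mathcal F^0])$, with coefficients that are uniformly bounded by Assumption \ref{ass:N-player}: $\overline b^i$ is bounded, $\alpha^i>0$, and $\theta^i\in[0,1)$ makes $\theta^i/(1-\mathbb E[\theta^i])$ bounded; the free term $|\overline b^i|^2/(2\alpha^i)-\theta^i\mathbb E[|\overline b^i|^2/\alpha^i|\mathcal F^0]-\dots$ is likewise bounded. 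Consequently, \eqref{CN:exp-N-benchmark-BSDE} falls inside the framework of Lipschitz conditional mean-field BSDEs, and the first step will follow from standard Picard iteration (or a weighted-norm contraction).

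\textbf{Energy estimate.} For the a priori bound, I will fix an arbitrary $\mathscr G^{(N)}$-stopping time $\tau$, apply It\^o to $|\breve Y^i_\cdot|^2$ on $[\tau,T]$, and take $\mathbb E[\,\cdot\,|\mathcal F_\tau]$ to kill the martingale parts, obtaining
\begin{equation*}
|\breve Y^i_\tau|^2+\mathbb E\Bigl[\int_\tau^T(|\breve Z^i_s|^2+|\breve Z^{i0}_s|^2)\,ds\,\Big|\,\mathcal F_\tau\Bigr]=\mathbb E\Bigl[|\theta^i\overline x^{-i}|^2-2\int_\tau^T\breve Y^i_s\,g^i_s\,ds\,\Big|\,\mathcal F_\tau\Bigr],
\end{equation*}
where $g^i$ denotes the driver. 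Writing $K:=\|\breve Y^i\|_\infty$ and splitting $g^i=g^{i,\text{lin}}+g^{i,\text{free}}$ into its $Z$-linear and bounded-free parts, I will apply Young's inequality $2ab\leq\eps a^2+b^2/\eps$ termwise: each $2\breve Y^i_s\,c_s\,\breve Z_s$ is dominated by $\eps|\breve Z_s|^2$ (absorbable on the LHS) plus $K^2c_s^2/\eps$, and each conditional-expectation factor is handled by conditional Jensen, $|\mathbb E[\breve Z|\mathcal F^0_s]|^2\leq\mathbb E[|\breve Z|^2|\mathcal F^0_s]$, followed by the tower property to relate its integrated contribution back to $\|\breve Z\|^2_{\mathscr G^{(N)},BMO}$.

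\textbf{Closure and main obstacle.} Bounding each $\mathbb E[\int_\tau^T|c_s|^2\,ds|\mathcal F_\tau]$ by the corresponding BMO norm, taking $\esssup_{\omega,\tau}$, and absorbing the $\eps$-fractions of $\|\breve Z^i\|^2_{BMO}+\|\breve Z^{i0}\|^2_{BMO}$ onto the LHS will produce a schematic inequality of the form
\begin{equation*}
K^2+\tfrac{1}{2}\bigl(\|\breve Z^i\|^2_{BMO}+\|\breve Z^{i0}\|^2_{BMO}\bigr)\leq K^2\Bigl(10\|b^i\|^2_{BMO}+20\bigl(1+\tfrac{\|\theta^i\|^2}{(1-\mathbb E[\theta^i])^2}\bigr)\|b^{i0}\|^2_{BMO}\Bigr)+(\text{free terms}).
\end{equation*}
The first two smallness conditions of \eqref{ass-theta-benchmark-BSDE} then make the $K^2$ coefficient at most $\tfrac14$, allowing a further absorption; the remaining four conditions bound each of the free contributions ($\|\overline b^i/\sqrt{\alpha^i}\|^4_{BMO}$, the $\|\theta^i/(1-\mathbb E[\theta^i])\|^2\|b^{i0}\|^2\|b^{i0}/\alpha^i\|^2$ term, $\|\theta^i\|^2\|\overline b^i/\sqrt{\alpha^i}\|^4_{BMO}$, and $\|\theta^i\overline x^{-i}\|^2$) by $3R^2/40$, whose sum $3R^2/10$ leaves enough slack to close at $R^2$. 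The principal obstacle is the precise bookkeeping: one must calibrate each $\eps$ in the Young splittings so that the resulting six constants align with \eqref{ass-theta-benchmark-BSDE}, and handle the conditional-Jensen step with care because the BMO norm is measured against $\mathscr G^{(N)}$-stopping times, not $\mathcal F^0$-stopping times.
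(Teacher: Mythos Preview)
Your plan coincides with the paper's proof: Lipschitz well-posedness first, then It\^o on $|\breve Y^i|^2$ from a generic $\mathscr G^{(N)}$-stopping time $\tau$, Young's inequality (the paper uses the specific split $2ab\le \tfrac{1}{10}a^2+10b^2$), and absorption via the six smallness conditions. The one point you flag as the ``principal obstacle'' is indeed the crux, and the paper isolates it as a separate appendix lemma (Lemma~\ref{lem:app-estimate-BMO}): for $f,g\in\mathbb H^2_{\mathscr G^{(N)},BMO}$ and any $\tau\in\mathcal T_{\mathscr G^{(N)}}$,
\[
\mathbb E\Bigl[\int_\tau^T \mathbb E[f_sg_s\,|\,\mathcal F^0_s]\,ds\,\Big|\,\mathscr G^{(N)}_\tau\Bigr]\le \|f\|_{\mathscr G^{(N)},BMO}\,\|g\|_{\mathscr G^{(N)},BMO}.
\]
Your suggested route ``conditional Jensen $+$ tower property'' does not literally close this, because $\mathcal F^0_s$ and $\mathscr G^{(N)}_\tau$ are not nested for a random $\tau$; the paper's argument first treats stopping times with countably many values (where one can freeze $\tau=t$ and use independence of $W^0$ from $(W^1,\dots,W^N,\mathscr G)$ to reduce to an $\mathcal F^0_t$-conditioning), and then approximates a general $\tau$ from above by such discrete stopping times. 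Once you have this lemma, your schematic inequality and the constant bookkeeping go through exactly as you outline.
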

\begin{proof}
The well-posedness of \eqref{CN:exp-N-benchmark-BSDE} in $\mathbb S^2\times \mathbb H^2$ holds because of the Lipschitz property of the driver in \eqref{CN:exp-N-benchmark-BSDE}. 
	It remains to prove $(\breve Y^i,\breve Z^i,\breve Z^{i0})\in\mathbb S^\infty_{\mathscr G^{(N)}}\times\mathbb H^2_{\mathscr G^{(N)},BMO,R}\times\mathbb H^2_{\mathscr G^{(N)},BMO,R}$.

	By It\^o's formula, taking conditional expectations and Proposition \ref{prop:appendix} we have for each $\tau\in\mathcal T_{\mathscr G^{(N)}}$
	\begin{equation}\label{CN:inequ:before-BMO}
	\begin{split}
	&~(\breve Y^i_\tau)^2+\mathbb E\left[\left.\int_\tau^T(\breve Z^i_s)^2\,ds\right|\mathscr G^{(N)}_\tau\right]+\mathbb E\left[\left.\int_\tau^T(\breve Z^{i0}_s)^2\,ds\right|\mathscr G^{(N)}_\tau\right]\\
	\leq&~2\|\breve Y^i\|_\infty\mathbb E\left[\left.\int_\tau^T\left|b^i_s\breve Z^i_s+\frac{|\overline b^i_s|^2}{2\alpha^i}+b^{i0}_s\left(\breve Z^{i0}_s+\frac{\theta^i\mathbb E[\breve Z^{i0}_s|\mathcal F^0_s]}{1-\mathbb E[\theta^i]}+\frac{\theta^i}{1-\mathbb E[\theta^i]}\mathbb E\left[\left.\frac{b^{i0}_t}{\alpha^i}\right|\mathcal F^0_s\right]\right)\right|\,ds\right|\mathscr G^{(N)}_\tau\right]\\
	&~+2\|\breve Y^i\|_\infty\mathbb E\left[\left.\int_\tau^T\left(\theta^i\mathbb E\left[\left.\frac{|\overline b^i_s|^2}{\alpha^i}\right|\mathcal F^0_s\right]+\theta^i\mathbb E[b^i_s\breve Z^i_s|\mathcal F^0_s]\right.\right.\right.\\
	&~\left.\left.\left.+\theta^i\mathbb E[b^{i0}_s\breve Z^{i0}_s|\mathcal F^0_s]+\frac{\theta^i}{1-\mathbb E[\theta^i]}\mathbb E[\theta^ib^{i0}_s|\mathcal F^0_s]\mathbb E[\breve Z^{i0}_s|\mathcal F^0_s]\right)\,ds\right|\mathscr G^{(N)}_\tau\right]\\
	&~+2\|\breve Y^i\|_\infty\frac{\theta^i}{1-\mathbb E[\theta^i]}\mathbb E\left[\left.\int_\tau^T\mathbb E[\theta^ib^{i0}_s|\mathcal F^0_s]\mathbb E\left[\left.\frac{b^{i0}_s}{\alpha^i}\right|\mathcal F^0_s\right]\,ds\right|\mathscr G^{(N)}_\tau\right]+(\theta^i)^2(\overline{x}^{-i})^2.
	\end{split}
	\end{equation}
By Lemma \ref{lem:app-estimate-BMO}, taking supremum on both sides of \eqref{CN:inequ:before-BMO} and using Young's inequality $2ab\leq\frac{a^2}{10}+10b^2$, we have
	\begin{align*}
	&~\|\breve Y^i\|^2_\infty+\|\breve Z^i\|^2_{\mathscr G^{(N)},BMO}+\|\breve Z^{i0}\|^2_{\mathscr G^{(N)},BMO}\\
	\leq&~\frac{7}{10}\|\breve Y^i\|_\infty^2+10\|b^i\|_{\mathscr G^{(N)},BMO}^2\|\breve Z^i\|^2_{\mathscr  G^{(N)},BMO}+\frac{5}{2}\left\|\frac{\overline b^i}{\sqrt{\alpha^i}}\right\|^4_{\mathscr G^{(N)},BMO}\\
	&~+20\left(1+\frac{\|\theta^i\|^2}{(1-\mathbb E[\theta^i])^2}\right)\|b^{i0}\|^2_{\mathscr G^{(N)},BMO}\|\breve Z^{i0}\|^2_{\mathscr G^{(N)},BMO}+\frac{40\|\theta^i\|^2}{(1-\mathbb E[\theta^i])^2}\|b^{i0}\|_{\mathscr G^{(N)},BMO}^2\left\|\frac{b^{i0}}{\alpha^i}\right\|_{\mathscr G^{(N)},BMO}^2\\
	&~+10\|\theta^i\|^2\left\|\frac{\overline b^i}{\sqrt{\alpha^i}}\right\|^4_{\mathscr G^{(N)},BMO}+10\|\theta^i\|^2\|b^i\|^2_{\mathscr G^{(N)},BMO}\|\breve Z^i\|^2_{\mathscr G^{(N)},BMO}\\
	&~+10\left(\|\theta^i\|+\frac{\|\theta^i\|^2}{1-\mathbb E[\theta^i]}\right)^2\|b^{i0}\|^2_{\mathscr G^{(N)},BMO}\|\breve Z^{i0}\|^2_{\mathscr G^{(N)},BMO}+\|\theta^i\overline{x}^{-i}\|^2.
	\end{align*}
By the first line of \eqref{ass-theta-benchmark-BSDE}	it holds that
	\begin{equation*}\label{CN:estimate-benchmark}
	\begin{split}
		&~\|\breve Y^i\|^2_{\infty}+\|\breve Z^i\|^2_{\mathscr G^{(N)},BMO}+\|\breve Z^{i0}\|^2_{\mathscr G^{(N)},BMO}\\
		\leq&~ \frac{10}{3}\left(\frac{5}{2}\left\|\frac{\overline b^i}{\sqrt{\alpha^i}}\right\|^4_{\mathscr G^{(N)},BMO}+\frac{40\|\theta^i\|^2}{(1-\mathbb E[\theta^i])^2}\|b^{i0}\|_{\mathscr G^{(N)},BMO}^2\left\|\frac{b^{i0}}{\alpha^i}\right\|_{\mathscr G^{(N)},BMO}^2\right.\\
		&~\left.+10\|\theta^i\|^2\left\|\frac{\overline b^i}{\sqrt{\alpha^i}}\right\|^4_{\mathscr G^{(N)},BMO}+\|\theta^i\overline{x}^{-i}\|^2\right),
	\end{split}
	\end{equation*}
	from which the desired result follows due to the second and the third lines of \eqref{ass-theta-benchmark-BSDE}.	
\end{proof}

\begin{remark} Although $(\breve Y^i,\breve Z^i,\breve Z^{i0})$ are progressively measurable w.r.t. $\mathbb F^i\vee\mathscr G$, in Lemma \ref{lem:benchmark-BSDE} we use the larger filtration $\mathscr G^{(N)}$ in order to use this lemma to prove the well-posedness result of \eqref{CN:exp-N-multi-BSDE3}.
	
\end{remark}

With the well-posedness of \eqref{CN:exp-N-benchmark-BSDE} especially the BMO bound of the solution at hand, we can solve \eqref{CN:exp-N-multi-BSDE3}. The approach is to compare \eqref{CN:exp-N-multi-BSDE3} with the  system of benchmark BSDEs \eqref{CN:exp-N-benchmark-BSDE}. Let 
\begin{equation}\label{transformation-tilde-to-hat}
	\widehat Y^i=\widetilde Y^i-\breve Y^i,\quad \widehat Z^{ii}=\widetilde Z^{ii}-\breve Z^i,\quad\widehat Z^{i0}=\widetilde Z^{i0}-\breve Z^{i0}
\end{equation}
 for each $i=1,\cdots,N$ and there holds that
\begin{equation}\label{CN:exp-N-multi-BSDE4}
\left\{\begin{split}
d\widehat Y^i_t=&~\left(b^i_t\widehat Z^{ii}_t-\frac{\alpha^i}{2}\sum_{j\neq i}\left(\widetilde Z^{ij}_t+\theta^i\frac{\widehat Z^{jj}_t+\breve Z^j_t+\frac{b^j_t}{\alpha^j}}{N-1}\right)^2-\frac{\theta^i}{N-1}\sum_{j\neq i}b^j_t\widehat Z^{jj}_t\right)\,dt\\
&~+\left(b^{i0}_t\frac{\widehat Z^{i0}_t}{1+\frac{\theta^i}{N-1}}+\frac{\theta^i b^{i0}_t}{(1+
	\frac{\theta^i}{N-1})}\frac{1}{N-1}\frac{\sum_{j=1}^N\frac{\widehat Z^{j0}_t}{1+\frac{\theta^j}{N-1}}}{1-\sum_{j=1}^N\frac{\theta^j}{N-1+\theta^j}}-\frac{\theta^i}{N-1}\sum_{j\neq i}\frac{b^{j0}_t \widehat Z^{j0}_t}{1+\frac{\theta^j}{N-1}}\right)\,dt\\
&~+\left(-\frac{\theta^i}{N-1}\sum_{j\neq i}b^{j0}_t\frac{\frac{\theta^j}{N-1}\frac{\sum_{k=1}^N\frac{\widehat Z^{k0}_t}{1+\frac{\theta^k}{N-1}}}{1-\sum_{k=1}^N\frac{\theta^k}{N-1+\theta^k}}}{1+\frac{\theta^j}{N-1}}+B^{i,N}_t\right)\,dt\\
&~+\widehat Z^{ii}_t\,dW^i_t+\widehat Z^{i0}_t\,dW^0_t+\sum_{j\neq i}\widetilde Z^{ij}_t\,dW^j_t\\
\widehat Y^i_T=&~0,\\
i=&~1,\cdots,N.
\end{split}\right.
\end{equation}
where
\begin{align*}
B^{i,N}_t:=&~-\frac{\theta^i}{N-1}\sum_{j\neq i}b^j_t\breve Z^j_t+\theta^i\mathbb E[b^i_t\breve Z^i_t|\mathcal F^0_t]\\
&~+\left(\frac{b^{i0}_t\frac{\theta^i}{N-1}\frac{\sum_{k=1}^N\frac{\breve Z^{k0}_t}{1+\frac{\theta^k}{N-1}}}{1-\sum_{k=1}^N\frac{\theta^k}{N-1+\theta^k}}}{1+\frac{\theta^i}{N-1}}-\frac{\theta^i b^{i0}_t}{1-\mathbb E[\theta^i]}\mathbb E[\breve Z^{i0}_t|\mathcal F^0_t]\right)\\
&~+\left(\frac{b^{i0}_t\frac{\theta^i}{N-1}\frac{\sum_{k=1}^N\frac{\frac{b^{k0}_t}{\alpha^k}}{1+\frac{\theta^k}{N-1}}}{1-\sum_{k=1}^N\frac{\theta^k}{N-1+\theta^k}}}{1+\frac{\theta^i}{N-1}}-\frac{\theta^i b^{i0}_t}{1-\mathbb E[\theta^i]}\mathbb E\left[\left.\frac{b^{i0}_t}{\alpha^i}\right|\mathcal F^0_t\right]\right)-\frac{\theta^i}{N-1}\sum_{j\neq i}\frac{b^{j0}_t\breve Z^{j0}_t}{1+\frac{\theta^j}{N-1}}+\theta^i \mathbb E[b^{i0}_t\breve Z^{i0}_t|\mathcal F^0_t]\\
&~+\left(-\frac{\theta^i}{N-1}\sum_{j\neq i}b^{j0}_t\frac{\frac{\theta^j}{N-1}\frac{\sum_{k=1}^N\frac{\breve Z^{k0}_t}{1+\frac{\theta^k}{N-1}}}{1-\sum_{k=1}^N\frac{\theta^k}{N-1+\theta^k}}}{1+\frac{\theta^j}{N-1}} +\frac{\theta^i}{1-\mathbb E[\theta^i]}\mathbb E[\theta^ib^{i0}_t|\mathcal F^0_t]\mathbb E[\breve Z^{i0}_t|\mathcal F^0_t]\right)\\
&~+\left(-\frac{\theta^i}{N-1}\sum_{j\neq i}b^{j0}_t\frac{\frac{\theta^j}{N-1}\frac{\sum_{k=1}^N\frac{\frac{b^{k0}_t}{\alpha^k}}{1+\frac{\theta^k}{N-1}}}{1-\sum_{k=1}^N\frac{\theta^k}{N-1+\theta^k}}}{1+\frac{\theta^j}{N-1}}+\frac{\theta^i}{1-\mathbb E[\theta^i]}\mathbb E[\theta^ib^{i0}_t|\mathcal F^0_t]\mathbb E\left[\left.\frac{b^{i0}_t}{\alpha^i}\right|\mathcal F^0_t\right]\right)\\
&~+\theta^i\left(-\frac{1}{N-1}\sum_{j\neq i}\frac{|\overline b^j_t|^2}{\alpha^j}+\mathbb E\left[\left.\frac{|\overline b^i_t|^2}{\alpha^i}\right|\mathcal F^0_t\right]\right)\\
&~-\frac{\theta^i}{(N-1)\left(1+\frac{\theta^i}{N-1}\right)}\frac{(b^{i0}_t)^2}{\alpha^i}+\frac{\theta^i}{(N-1)^2}\sum_{j\neq i}\frac{\theta^j(b^{j0}_t)^2}{\alpha^j\left(1+\frac{\theta^j}{N-1}\right)}\\
&~- \frac{\theta^i}{N-1+\theta^i  }b^{i0}\breve Z^{i0}.
\end{align*}
The estimate of $B^{i,N}$ can be found in Lemma \ref{lem:estimate-B}.
In particular, $\|\sqrt{|B^{i,N}|}\|_{\mathscr G^{(N)},BMO}$ can be made small enough by assuming $\{\|\theta^i\|\}_i$ small enough.

In order to facilitate the presentation of the estimate, we introduce the following notation: for two sequences of stochastic processes $f^n$ and $g^n$, denote
\begin{equation*}
\left\{\begin{split}
&~\overline{\|f^\cdot\|   }^{(-i),N}:= \frac{1}{N-1}\sum_{j\neq i}\|f^j\|_{\mathscr G^{(N)},BMO},\\
&~ \overline{ \| f^\cdot,g^\cdot \|  }^{(-i),N}:=\frac{1}{N-1}\sum_{j\neq i}\|f^j\|_{\mathscr G^{(N)},BMO}\|g^j\|_{\mathscr G^{(N)},BMO},\\
&~ \overline{\|f^\cdot\|  }^N :=\frac{1}{N}\sum_{j=1}^N\|f^j \|_{\mathscr G^{(N)},BMO}.
\end{split}\right.
\end{equation*}

The following theorem establishes the well-posedness result of \eqref{CN:exp-N-multi-BSDE4} and thus \eqref{CN:exp-N-multi-BSDE3}.
\begin{theorem}\label{thm:CN-wellposedness-exp-multi-BSDE}
Let  
\[
A^{i,N}=\sqrt{2\left\{ 20\left\|\sqrt{|B^{i,N}|}\right\|_{\mathscr G^{(N)},BMO}^2+\frac{640\|\alpha^i\|^2\|\theta^i\|^4R^4}{(N-1)^2} +\frac{80\|\alpha^i\|^2\|\theta^i\|^4}{(N-1)^2}\left(\overline{\left\|\frac{b^\cdot}{\alpha^\cdot}\right\|^2}^{(-i),N}\right)^2\right\}}.
\]
In addition to Assumption \ref{ass:N-player} and the assumption \eqref{ass-theta-benchmark-BSDE}, for each fixed $N$ let $\{b^i\}$, $R$ and $\{\theta^i\}$ small enough such that
\begin{equation}\label{ass-multi-BSDE}
\left\{\begin{split}
&~20\|b^i\|^2_{\mathscr G^{(N)},BMO}\leq \frac{1}{20},\quad 640\|\alpha^i\|^2(\max_iA^{i,N})^2\leq \frac{1}{20},\quad \\
&~20\|\theta^i\|^2\overline{\|b^\cdot\|^2}^{(-i),N}\leq \frac{1}{20},\quad 20\|b^{i0}\|^2_{\mathscr G^{(N)},BMO}\leq \frac{1}{20},\\
&~\frac{20\|\theta^i\|^2}{\left(1-\sum_{j=1}^N\frac{\|\theta^j\|}{N-1+\|\theta^j\|}\right)^2}\|b^{i0}\|_{\mathscr G^{(N)},BMO}^2\leq \frac{1}{20},\quad 20\|\theta^i\|^2\overline{\|b^{\cdot0}\|^2}^{(-i),N}\leq \frac{1}{20},\\
&~\quad\frac{20\|\theta^i\|^2}{\left(1-\sum_{j=1}^N\frac{\|\theta^j\|}{N-1+\|\theta^j\|}\right)^2}\left(\overline{\|\theta^\cdot b^{\cdot0}\|}^{(-i),N}\right)^2\leq \frac{1}{20}.
\end{split}\right.
\end{equation}
and
\begin{equation}\label{ass-multi-BSDE2}
40\|\alpha^i\|^2\left\{ 6(\max_iA^{(i,N)})^2+12\left(2\frac{(\max_iA^{i,N})^2}{N-1}+4\frac{R^2}{N-1}+\frac{4}{N-1}\overline{\left\|\frac{b^\cdot}{\alpha^\cdot}\right\|^2}^{(-i),N}\right) \right\}\leq\frac{1}{20},
\end{equation}
then there exists a unique solution to \eqref{CN:exp-N-multi-BSDE4} and thus \eqref{CN:exp-N-multi-BSDE3}. In particular, the solution to \eqref{CN:exp-N-multi-BSDE4} admits the following estimate
\begin{equation}\label{estimate-thm-widehat}
\frac{1}{2}\|\widehat Y^i\|_{\infty}^2+\|\widehat Z^{ii}\|^2_{\mathscr G^{(N)},BMO}+\|\widehat Z^{i0}\|^2_{\mathscr G^{(N)},BMO}+\sum_{j\neq i}\|\widetilde Z^{ij}\|^2_{\mathscr G^{(N)},BMO}\leq (\max_iA^{i,N})^2.
\end{equation}
As a corollary, the well-posedness of \eqref{CN:exp-N-multi-BSDE2} and \eqref{CN:exp-FBSDE-N} holds. That is, there exists a uniqe $(\widetilde Y^i,\widetilde Z^{ii},\widetilde Z^{i0},\widetilde Z^{ij},j\neq i)\in\mathbb S^\infty_{\mathscr G^{(N)}}\times\mathbb H^2_{\mathscr G^{(N)},BMO}\times\cdots\times\mathbb H^2_{\mathscr G^{(N)},BMO}$, $i=1,\cdots,N$, such that \eqref{CN:exp-N-multi-BSDE2} holds, and there exists a unique $(X^i,Y^i,Z^{ii},Z^{i0},Z^{ij},j\neq i)\in \mathbb S^2_{\mathscr G^{(N)}}\times \mathbb S^2_{\mathscr G^{(N)}}\times\mathbb H^2_{\mathscr G^{(N)},BMO}\times\cdots\times\mathbb H^2_{\mathscr G^{(N)},BMO}$, for each $i=1,\cdots,N$, such that \eqref{CN:exp-FBSDE-N} holds.
\end{theorem}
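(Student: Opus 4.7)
The plan is to solve the multi-dimensional BSDE \eqref{CN:exp-N-multi-BSDE4}---zero terminal condition, driver quadratic in the $\widehat Z$-variables---by a fixed point / contraction argument on a product of BMO balls, in the spirit of Tevzadze \cite{Tevzadze2008}. The necessary smallness comes from assumptions \eqref{ass-multi-BSDE}--\eqref{ass-multi-BSDE2} combined with the a priori BMO bound \eqref{estimate-MF-BSDE-R} on the benchmark triples $(\breve Y^i,\breve Z^i,\breve Z^{i0})$ from Lemma \ref{lem:benchmark-BSDE} and with the estimate of $\|\sqrt{|B^{i,N}|}\|_{\mathscr G^{(N)},BMO}$ from Lemma \ref{lem:estimate-B}; together these pin down the radius $\max_i A^{i,N}$ of the ball on which I will work.

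First I establish the a priori estimate \eqref{estimate-thm-widehat}. Applying It\^o's formula to $(\widehat Y^i)^2$ between any $\tau\in\mathcal T_{\mathscr G^{(N)}}$ and $T$, taking conditional expectation given $\mathscr G^{(N)}_\tau$, and combining Cauchy--Schwarz and Young's inequality $2ab\leq \frac{a^2}{20}+20b^2$ with Lemma \ref{lem:app-estimate-BMO} to convert drift integrals into products of BMO norms, every quadratic term---most importantly the squared piece $\bigl(\widetilde Z^{ij}+\theta^i(\widehat Z^{jj}+\breve Z^j+b^j/\alpha^j)/(N-1)\bigr)^2$---is expanded and redistributed among $\|\widehat Z^{ii}\|_{\mathscr G^{(N)},BMO}^2$, $\|\widehat Z^{i0}\|_{\mathscr G^{(N)},BMO}^2$, $\sum_{j\neq i}\|\widetilde Z^{ij}\|_{\mathscr G^{(N)},BMO}^2$, and data-dependent terms. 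The factors $\theta^i/(N-1)$ in front of the off-diagonal couplings, together with the ball radius $R$ supplied by Lemma \ref{lem:benchmark-BSDE}, are precisely the ingredients of $A^{i,N}$; the assumptions \eqref{ass-multi-BSDE} then absorb the BMO-prefactors of the $\widehat Z$-norms back to the left-hand side and deliver \eqref{estimate-thm-widehat}.

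Next, for existence and uniqueness, let $\mathcal B$ denote the product of BMO balls of radius $\max_i A^{i,N}$. Given a frozen input $(\widehat{\mathcal Z}^{ii},\widehat{\mathcal Z}^{i0},\widetilde{\mathcal Z}^{ij})\in\mathcal B$, substitute it into the quadratic and off-diagonal coupling terms of \eqref{CN:exp-N-multi-BSDE4} to obtain a linear BSDE with zero terminal condition and BMO-data driver, which is uniquely solvable in $\mathbb S^\infty\times\mathbb H^2_{\mathscr G^{(N)},BMO}$; this defines a map $\Phi$ on $\mathcal B$. The a priori argument above shows $\Phi(\mathcal B)\subset \mathcal B$. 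To prove contraction, take two inputs, apply It\^o to the square of the difference $\delta\widehat Y^i$ of the corresponding outputs, linearize the quadratic difference through $a^2-b^2=(a-b)(a+b)$ so that the sum $(a+b)$ has BMO norm bounded by $2\max_i A^{i,N}+R+\|b^j/\alpha^j\|_{\mathscr G^{(N)},BMO}$, and redo the Young/BMO absorption. Assumption \eqref{ass-multi-BSDE2} is calibrated precisely so that the resulting Lipschitz constant is strictly smaller than $1$, and the Banach fixed point theorem then yields a unique solution to \eqref{CN:exp-N-multi-BSDE4}. Well-posedness of \eqref{CN:exp-N-multi-BSDE3}, \eqref{CN:exp-N-multi-BSDE2} and the original \eqref{CN:exp-FBSDE-N} then follows by unwinding the invertible linear transformations \eqref{transformation-FB-to-tilde} and \eqref{transformation-tilde-to-hat}, with the forward $X^i\in\mathbb S^2_{\mathscr G^{(N)}}$ recovered by direct integration since $\overline Z^i\in\mathbb H^2_{\mathscr G^{(N)},BMO}$ and $\overline b^i$ is bounded.

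The main obstacle is the simultaneous multi-dimensional bookkeeping: $\widehat Z^{k0}$ enters equation $i$ for every $k$ through the pointwise combination inside $f_{i,N}$, and the quadratic piece couples $(\widehat Z^{jj})_{j\neq i}$ with $(\widetilde Z^{ij})_{j\neq i}$ nonlinearly. This forces one to run the a priori estimate and the contraction estimate \emph{simultaneously} on the full vector $(\widehat Y^i,\widehat Z^{ii},\widehat Z^{i0},(\widetilde Z^{ij})_{j\neq i})_{i=1}^N$ with uniform-in-$i$ constants captured by $\max_i A^{i,N}$; the tight calibration between the various small parameters in \eqref{ass-multi-BSDE}--\eqref{ass-multi-BSDE2} is what makes this uniform control actually close.
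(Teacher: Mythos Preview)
Your proposal is correct and follows essentially the same route as the paper: a Tevzadze-type fixed point on a product of BMO balls of radius $\max_i A^{i,N}$, with the frozen-driver BSDE solved via Proposition \ref{prop:appendix}, ball invariance obtained by It\^o's formula plus Young's inequality $2ab\le \tfrac{a^2}{20}+20b^2$, and the contraction obtained by linearizing the quadratic piece through $a^2-b^2=(a-b)(a+b)$. The one point you leave implicit that the paper makes explicit is \emph{how} the contraction closes across the full vector: because equation $i$ contains the off-diagonal inputs $\widehat z^{jj}$ and $\widehat z^{k0}$ for $j,k\neq i$, the individual estimate for index $i$ does not yield a contraction by itself; the paper finishes by \emph{averaging} the $i$-th inequality over $i=1,\dots,N$, which collapses the cross-terms (e.g.\ $\tfrac{1}{N}\sum_i\tfrac{1}{N-1}\sum_{j\neq i}\|\widehat z^{jj}-\widehat z^{jj'}\|^2=\tfrac{1}{N}\sum_j\|\widehat z^{jj}-\widehat z^{jj'}\|^2$) and produces a genuine contraction in the averaged norm $\tfrac{1}{N}\sum_i(\|\cdot\|^2_{ii}+\|\cdot\|^2_{i0}+\sum_{j\neq i}\|\cdot\|^2_{ij})$. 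Your ``uniform-in-$i$'' phrasing suggests a $\max_i$-norm, which would also work here but requires slightly different bookkeeping; either way this aggregation step should be stated, since without it the off-diagonal coupling does not cancel.
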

\begin{proof}
We use the fixed point arguement in \cite{Tevzadze2008}. 
For each $i=1,\cdots,N$, fix $((\widetilde z^{ij})_{j\neq i},\widehat z^{ii},\widehat z^{i0})$ such that \begin{equation}\label{wellposedness-mapping-1}
\sum_{j\neq i}\|\widetilde z^{ij}\|^2_{\mathscr G^{(N)},BMO}+\|\widehat z^{ii}\|^2_{\mathscr G^{(N)},BMO}+\|\widehat z^{i0}\|^2_{\mathscr G^{(N)},BMO}\leq (\max_iA^{i,N})^2
\end{equation}
 and consider the resulting BSDEs
\begin{equation}\label{CN:exp-N-multi-BSDE6}
\left\{\begin{split}
d\widehat Y^i_t=&~\left(b^i_t\widehat z^{ii}_t-\frac{\alpha^i}{2}\sum_{j\neq i}\left(\widetilde z^{ij}_t+\theta^i\frac{\widehat z^{jj}_t+\breve Z^j_t+\frac{b^j_t}{\alpha^j}}{N-1}\right)^2-\frac{\theta^i}{N-1}\sum_{j\neq i}b^j_t\widehat z^{jj}_t\right)\,dt\\
&~+\left(b^{i0}_t\frac{\widehat z^{i0}_t}{1+\frac{\theta^i}{N-1}}+\frac{\theta^i b^{i0}_t}{(1+
	\frac{\theta^i}{N-1})}\frac{1}{N-1}\frac{\sum_{j=1}^N\frac{\widehat z^{j0}_t}{1+\frac{\theta^j}{N-1}}}{1-\sum_{j=1}^N\frac{\theta^j}{N-1+\theta^j}}-\frac{\theta^i}{N-1}\sum_{j\neq i}\frac{b^{j0}_t \widehat z^{j0}_t}{1+\frac{\theta^j}{N-1}}\right)\,dt\\
&~+\left(-\frac{\theta^i}{N-1}\sum_{j\neq i}b^{j0}_t\frac{\frac{\theta^j}{N-1}\frac{\sum_{k=1}^N\frac{\widehat z^{k0}_t}{1+\frac{\theta^k}{N-1}}}{1-\sum_{k=1}^N\frac{\theta^k}{N-1+\theta^k}}}{1+\frac{\theta^j}{N-1}}+B^{i,N}_t\right)\,dt\\
&~+\widehat Z^{ii}_t\,dW^i_t+\widehat Z^{i0}_t\,dW^0_t+\sum_{j\neq i}\widetilde Z^{ij}_t\,dW^j_t\\
\widehat Y^i_T=&~0,\\
i=&~1,\cdots,N.
\end{split}\right.
\end{equation}
	By Proposition \ref{prop:appendix}, there exists a unique solution $(\widehat Y^i,\widehat Z^{ii},\widehat Z^{i0},\widetilde Z^{ij},j\neq i)\in\mathbb S^\infty_{\mathscr G^{(N)}}\times \mathbb H^2_{\mathscr G^{(N)},BMO}\times\cdots\times\mathbb H^2_{\mathscr G^{(N)},BMO}$ for each $i=1,\cdots,N$. In the following we will prove 
	$\sum_{j\neq i}\|\widetilde Z^{ij}\|^2_{\mathscr G^{(N)},BMO}+\|\widehat Z^{ii}\|^2_{\mathscr G^{(N)},BMO}+\|\widehat Z^{i0}\|^2_{\mathscr G^{(N)},BMO}\leq (\max_iA^{i,N})^2$
	 for each $i$ and the mapping $\{(\widehat z^{ii},\widehat z^{i0},\widetilde z^{ij},j\neq i)\}_{i=1,\cdots,N}\mapsto \{(\widehat Z^{ii},\widehat Z^{i0},\widetilde Z^{ij},j\neq i)\}_{i=1,\cdots,N}$ is a contraction. By It\^o's formula, Lemma \ref{lem:app-estimate-BMO} and Young's inequality $2ab\leq \frac{a^2}{20}+20b^2$, we have for each $\tau\in\mathcal T_{\mathscr G^{(N)}}$
	\begin{align*}
				&~(\widehat Y^i_\tau)^2+\mathbb E \left[\left.\int_\tau^T(\widehat Z^{ii}_s)^2\,ds\right|\mathscr G^{(N)}_\tau\right]+\mathbb E \left[\left.\int_\tau^T(\widehat Z^{i0}_s)^2\,ds\right|\mathscr G^{(N)}_\tau\right]+\sum_{j\neq i}\mathbb E\left[\left.\int_\tau^T(\widetilde Z^{ij}_s)^2\,ds\right|\mathscr G^{(N)}_\tau\right]\\
				\leq&~\frac{1}{20}\|\widehat Y^i\|^2_\infty+20\|b^i\|^2_{\mathscr G^{(N)},BMO}\|\widehat z^{ii}\|^2_{\mathscr G^{(N)},BMO}+\frac{1}{20}\|\widehat Y^i\|^2_\infty+20\|\alpha^i\|^2\left(\sum_{j\neq i}\|\widetilde z^{ij}\|^2_{\mathscr G^{(N)},BMO}\right)^2+\frac{1}{20}\|\widehat Y^i\|^2_\infty\\
				&~+\frac{80\|\alpha^i\|^2\|\theta^i\|^4}{(N-1)^2}\left(\overline{\|\widehat z^{\cdot\cdot}+\breve Z^\cdot\|^2}^{(-i),N}\right)^2+\frac{1}{20}\|\widehat Y^i\|^2_\infty+\frac{80\|\alpha^i\|^2\|\theta^i\|^4}{(N-1)^2}\left(\overline{\left\|\frac{b^\cdot}{\alpha^\cdot}\right\|^2}^{(-i),N}\right)^2\\
				&~+\frac{1}{20}\|\widehat Y^i\|^2_\infty+20\|\theta^i\|^2\overline{\|b^\cdot\|^2}^{(-i),N}\overline{\|\widehat z^{\cdot\cdot}\|^2}^{(-i),N}+\frac{1}{20}\|\widehat Y^i\|^2_\infty+20\|b^{i0}\|^2_{\mathscr G^{(N)},BMO}\|\widehat z^{i0}\|^2_{\mathscr G^{(N)},BMO}\\
				&~+\frac{1}{20}\|\widehat Y^i\|_{\infty}^2+\frac{20\|\theta^i\|^2}{\left(1-\sum_{j=1}^N\frac{\|\theta^j\|}{N-1+\|\theta^j\|}\right)^2}\|b^{i0}\|^2_{\mathscr G^{(N)},BMO}\left(\frac{N}{N-1}\overline{\|\widehat z^{\cdot0}\|}^N\right)^2\\
				&~+\frac{1}{20}\|\widehat Y^i\|_{\infty}^2+20\|\theta^i\|^2\frac{N}{N-1}\overline{\|\widehat z^{\cdot0}\|^2}^N\overline{\|b^{\cdot0}\|^2}^{(-i),N}+\frac{1}{20}\|\widehat Y^i\|_{\infty}^2\\
				&~+\frac{20\|\theta^i\|^2}{\left(1-\sum_{j=1}^N\frac{\|\theta^j\|}{N-1+\|\theta^j\|}\right)^2}\left(\overline{\|\theta^\cdot b^{\cdot0}\|}^{(-i),N}\right)^2\left(\frac{N}{N-1}\overline{\|\widehat z^{\cdot0}\|}^{N}\right)^2+\frac{1}{20}\|\widehat Y^i\|_{\infty}^2+20\left\|\sqrt{|B^{i,N}|}\right\|_{\mathscr G^{(N)},BMO}^2.
	\end{align*}
By the estimate above, \eqref{wellposedness-mapping-1} and \eqref{estimate-MF-BSDE-R}, and rearranging terms, we have
\begin{align*}
	&~\frac{1}{2}\|\widehat Y^i\|_{\infty}^2+\|\widehat Z^{ii}\|^2_{\mathscr G^{(N)},BMO}+\|\widehat Z^{i0}\|^2_{\mathscr G^{(N)},BMO}+\sum_{j\neq i}\|\widetilde Z^{ij}\|^2_{\mathscr G^{(N)},BMO}\\
	\leq&~20\|b^i\|^2_{\mathscr G^{(N)},BMO}(\max_iA^{i,N})^2+20\|\alpha^i\|^2(\max_iA^{i,N})^4+\frac{320\|\alpha^i\|^2\|\theta^i\|^4}{(N-1)^2}\left((\max_iA^{i,N})^2+R^2\right)^2\\
	&~+\frac{80\|\alpha^i\|^2\|\theta^i\|^4}{(N-1)^2}\left(\overline{\left\|\frac{b^\cdot}{\alpha^\cdot}\right\|^2}^{(-i),N}\right)^2
	+20\|\theta^i\|^2\overline{\|b^\cdot\|^2}^{(-i),N}(\max_iA^{i,N})^2\\
	&~+20\|b^{i0}\|^2_{\mathscr G^{(N)},BMO}(\max_iA^{i,N})^2+\frac{20\|\theta^i\|^2}{\left(1-\sum_{j=1}^N\frac{\|\theta^j\|}{N-1+\|\theta^j\|}\right)^2}\|b^{i0}\|^2_{\mathscr G^{(N)},BMO}(\max_iA^{i,N})^2\\
	&~+20\|\theta^i\|^2(\max_iA^{i,N})^2\overline{\|b^{\cdot0}\|^2}^{(-i),N}+\frac{20\|\theta^i\|^2}{\left(1-\sum_{j=1}^N\frac{\|\theta^j\|}{N-1+\|\theta^j\|}\right)^2}\left(\overline{\|\theta^\cdot b^{\cdot0}\|}^{(-i),N}\right)^2(\max_iA^{i,N})^2\\
	&~+20\left\|\sqrt{|B^{i,N}|}\right\|_{\mathscr G^{(N)},BMO}^2.
\end{align*}
Thus, by \eqref{ass-multi-BSDE} it holds that
	\begin{equation*}
	\frac{1}{2}\|\widehat Y^i\|_{\infty}^2+\|\widehat Z^{ii}\|^2_{\mathscr G^{(N)},BMO}+\|\widehat Z^{i0}\|^2_{\mathscr G^{(N)},BMO}+\sum_{j\neq i}\|\widetilde Z^{ij}\|^2_{\mathscr G^{(N)},BMO}\leq (\max_iA^{i,N})^2.
	\end{equation*}
Now fix $(\widetilde z^{ij}, \widehat z^{ii}, \widehat z^{i0})$ and $(\widetilde z^{ij'}, \widehat z^{ii'}, \widehat z^{i0'})$  such that $\sum_{j\neq i}\|\widetilde z^{ij}\|^2_{\mathscr G^{(N)},BMO}+\|\widehat z^{ii}\|^2_{\mathscr G^{(N)},BMO}+\|\widehat z^{i0}\|^2_{\mathscr G^{(N)},BMO}\leq (\max_iA^{i,N})^2$ and $\sum_{j\neq i}\|\widetilde z^{ij'}\|^2_{\mathscr G^{(N)},BMO}+\|\widehat z^{ii'}\|^2_{\mathscr G^{(N)},BMO}+\|\widehat z^{i0'}\|^2_{\mathscr G^{(N)},BMO}\leq (\max_iA^{i,N})^2$ and let $(\widehat Y^i,\widehat Z^{ii},\widehat Z^{i0},\{\widetilde Z^{ij}\}_{j\neq i})$ and $(\widehat Y^{i'},\widehat Z^{ii'},\widehat Z^{i0'},\{\widetilde Z^{ij'}\}_{j\neq i})$ be the corresponding solutions. Then the same argument as above yields that
\begin{align*}
	&~\|\widehat Z^{ii}-\widehat Z^{ii'}\|^2_{\mathscr G^{(N)},BMO}+\|\widehat Z^{i0}-\widehat Z^{i0'}\|^2_{\mathscr G^{(N)},BMO}+\sum_{j\neq i}\|\widetilde Z^{ij}-\widetilde Z^{ij'}\|^2_{\mathscr G^{(N)},BMO}\\
\leq&~20\|b^i\|^2_{\mathscr G^{(N)},BMO}\|\widehat z^{ii}-\widehat z^{ii'}\|^2_{\mathscr G^{(N)},BMO}\\
&~+20\|\alpha^i\|^2\sum_{j\neq i}\left\| \widetilde z^{ij} +\widetilde z^{ij'}+\theta^i\frac{\widehat z^{jj}+\widehat z^{jj'}+2\breve Z^j+\frac{2b^j}{\alpha^j}}{N-1} \right\|^2_{\mathscr G^{(N)},BMO}\sum_{j\neq i}\left\| \widetilde z^{ij}-\widetilde z^{ij'} +\frac{\theta^i(\widehat z^{jj}-\widehat z^{jj'})}{N-1} \right\|^2_{\mathscr G^{(N)},BMO}\\
&~+\frac{20\|\theta^i\|^2}{(N-1)^2}\sum_{j\neq i}\|b^j\|^2_{\mathscr G^{(N)},BMO}\sum_{j\neq i}\|\widehat z^{jj}-\widehat z^{jj'}\|^2_{\mathscr G^{(N)},BMO}+20\|b^{i0}\|^2_{\mathscr G^{(N)},BMO}\|\widehat z^{i0}-\widehat z^{i0'}\|^2_{\mathscr G^{(N)},BMO}\\
&~+\frac{20\|\theta^i\|^2}{\left(1-\sum_{j=1}^N\frac{\|\theta^j\|}{N-1+\|\theta^j\|}\right)^2}\|b^{i0}\|^2_{\mathscr G^{(N)},BMO}\left(\frac{1}{N-1}\sum_{j=1}^N\|\widehat z^{j0}-\widehat z^{j0'}\|_{\mathscr G^{(N)},BMO}\right)^2\\
&~+20\|\theta^i\|^2\overline{\|b^{\cdot0}\|^2}^{(-i),N}\frac{1}{N-1}\sum_{j\neq i}\|\widehat z^{j0}-\widehat z^{j0'}\|^2_{\mathscr G^{(N)},BMO}\\
&~+\frac{20\|\theta^i\|^2}{\left(1-\sum_{j=1}^N\frac{\|\theta^j\|}{N-1+\|\theta^j\|}\right)^2}\left(\overline{\|\theta^\cdot b^{\cdot0}\|}^{(-i),N}\right)^2\left(\frac{1}{N-1}\sum_{j=1}^N\|\widehat z^{j0}-\widehat z^{j0'}\|_{\mathscr G^{(N)},BMO}\right)^2\\
		\leq&~20\|b^i\|^2_{\mathscr G^{(N)},BMO}\|\widehat z^{ii}-\widehat z^{ii'}\|^2_{\mathscr G^{(N)},BMO}\\
		&~+40\|\alpha^i\|^2\left\{ 6(\max_iA^{i,N})^2+\frac{12\left(2(\max_iA^{i,N})^2+4R^2+4\overline{\|\frac{b^\cdot}{\alpha^\cdot}\|^2}^{(-i),N}\right)}{N-1} \right\}\sum_{j\neq i}\left\| \widetilde z^{ij}-\widetilde z^{ij'} \right\|_{\mathscr G^{(N)},BMO}^2\\
		&~+\frac{40\|\alpha^i\|^2\|\theta^i\|^2}{(N-1)^2}\left\{ 6(\max_iA^{i,N})^2+\frac{12\left(2(\max_iA^{i,N})^2+4R^2+4\overline{\|\frac{b^\cdot}{\alpha^\cdot}\|^2}^{(-i),N}\right)}{N-1} \right\}\sum_{j\neq i}\left\|\widehat z^{jj}-\widehat z^{jj'}\right\|_{\mathscr G^{(N)},BMO}^2\\
		&~+\frac{20\|\theta^i\|^2}{N-1}\overline{\|b^\cdot\|^2}^{(-i),N}\sum_{j\neq i}\|\widehat z^{jj}-\widehat z^{jj'}\|^2_{\mathscr G^{(N)},BMO}+20\|b^{i0}\|^2_{\mathscr G^{(N)},BMO}\|\widehat z^{i0}-\widehat z^{i0'}\|^2_{\mathscr G^{(N)},BMO}\\
		&~+\frac{20\|\theta^i\|^2}{\left(1-\sum_{j=1}^N\frac{\|\theta^j\|}{N-1+\|\theta^j\|}\right)^2}\|b^{i0}\|^2_{\mathscr G^{(N)},BMO}\left(\frac{1}{N-1}\sum_{j=1}^N\|\widehat z^{j0}-\widehat z^{j0'}\|_{\mathscr G^{(N)},BMO}\right)^2\\
		&~+20\|\theta^i\|^2\overline{\|b^{\cdot0}\|^2}^{(-i),N}\frac{1}{N-1}\sum_{j\neq i}\|\widehat z^{j0}-\widehat z^{j0'}\|^2_{\mathscr G^{(N)},BMO}\\
		&~+\frac{20\|\theta^i\|^2}{\left(1-\sum_{j=1}^N\frac{\|\theta^j\|}{N-1+\|\theta^j\|}\right)^2}\left(\overline{\|\theta^\cdot b^{\cdot0}\|}^{(-i),N}\right)^2\left(\frac{1}{N-1}\sum_{j=1}^N\|\widehat z^{j0}-\widehat z^{j0'}\|_{\mathscr G^{(N)},BMO}\right)^2.
\end{align*}
By \eqref{ass-multi-BSDE} and \eqref{ass-multi-BSDE2}
we get
\begin{equation*}
\begin{split}
&~\|\widehat Z^{ii}-\widehat Z^{ii'}\|^2_{\mathscr G^{(N)},BMO}+\|\widehat Z^{i0}-\widehat Z^{i0'}\|^2_{\mathscr G^{(N)},BMO}+\sum_{j\neq i}\|\widetilde Z^{ij}-\widetilde Z^{ij'}\|^2_{\mathscr G^{(N)},BMO}\\
\leq&~\frac{1}{20}\|\widehat z^{ii}-\widehat z^{ii'}\|^2_{\mathscr G^{(N)},BMO}+\frac{1}{20}\sum_{j\neq i}\left\| \widetilde z^{ij}-\widetilde z^{ij'} \right\|_{\mathscr G^{(N)},BMO}^2+\frac{1}{20(N-1)^2}\sum_{j\neq i}\left\|\widehat z^{jj}-\widehat z^{jj'}\right\|_{\mathscr G^{(N)},BMO}^2\\
&~+\frac{1}{20(N-1)}\sum_{j\neq i}\|\widehat z^{jj}-\widehat z^{jj'}\|^2_{\mathscr G^{(N)},BMO}+\frac{1}{20}\|\widehat z^{i0}-\widehat z^{i0'}\|^2_{\mathscr G^{(N)},BMO}\\
&~+\frac{1}{20}\frac{N}{(N-1)^2}\sum_{j=1}^N\|\widehat z^{j0}-\widehat z^{j0'}\|_{\mathscr G^{(N)},BMO}^2+\frac{1}{20(N-1)}\sum_{j\neq i}\|\widehat z^{j0}-\widehat z^{j0'}\|^2_{\mathscr G^{(N)},BMO}\\
&~+\frac{N}{20(N-1)^2}\sum_{j=1}^N\|\widehat z^{j0}-\widehat z^{j0'}\|_{\mathscr G^{(N)},BMO}^2.
\end{split}
\end{equation*}	
Taking average over $i$ on both sides and rearranging terms, one has
\begin{equation*}
\begin{split}
&~\frac{1}{N}\sum_{i=1}^N\|\widehat Z^{ii}-\widehat Z^{ii'}\|^2_{\mathscr G^{(N)},BMO}+\frac{1}{N}\sum_{i=1}^N\|\widehat Z^{i0}-\widehat Z^{i0'}\|^2_{\mathscr G^{(N)},BMO}+\frac{1}{N}\sum_{i=1}^N\sum_{j\neq i}\|\widetilde Z^{ij}-\widetilde Z^{ij'}\|^2_{\mathscr G^{(N)},BMO}\\
\leq&~\frac{1}{4}\frac{1}{N}\sum_{i=1}^N\|\widehat z^{ii}-\widehat z^{ii'}\|^2_{\mathscr G^{(N)},BMO}+\frac{1}{20}\frac{1}{N}\sum_{i=1}^N\sum_{j\neq i}\left\| \widetilde z^{ij}-\widetilde z^{ij'} \right\|_{\mathscr G^{(N)},BMO}^2+\frac{1}{2}\frac{1}{N}\sum_{i=1}^N\|\widehat z^{i0}-\widehat z^{i0'}\|^2_{\mathscr G^{(N)},BMO}.
\end{split}
\end{equation*}		
Thus, the contraction property follows.
\end{proof}

\subsection{Verification and Convergence to MFG}

\begin{theorem}\label{thm:verification}
Assume Assumption \ref{ass:N-player}, \eqref{ass-theta-benchmark-BSDE}, \eqref{ass-multi-BSDE} and \eqref{ass-multi-BSDE2} hold. Let the space of admissible strategies for each player be $\mathbb H^2_{\mathscr G^{(N)},BMO}$. Then the vector $(\overline \pi^{1,*},\cdots,\overline \pi^{N,*})$ is the unique Nash equilibrium of the $N$-player game \eqref{wealth-exp-N-i}-\eqref{N-cost-i}, where for each $i=1,\cdots,N$, $\overline \pi^{i,*}$ is given by \eqref{candidate-equilibrium-N}. Moreover, the value function for player $i$ is given by 
\[
	V^i(x^i;x^j,j\neq i)=-e^{-\alpha^i(x^i-Y^i_0)},
\]
where $Y^i$ is the backward component of the solution to \eqref{CN:exp-FBSDE-N}.
\end{theorem}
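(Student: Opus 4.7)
The plan is to carry out rigorously the martingale optimality principle of Hu, Imkeller \& M\"uller \cite{HIM-2005} sketched in Section~1.1, relying on the BMO estimates of Theorem~\ref{thm:CN-wellposedness-exp-multi-BSDE} for the required integrability. Fixing player $i$ and freezing the opponents at $\overline\pi^{j,*}$, $j\neq i$, I would take the unique FBSDE solution $(X^i,Y^i,\overline Z^i,(Z^{ij})_{j\neq i})$ from Theorem~\ref{thm:CN-wellposedness-exp-multi-BSDE} and, for an arbitrary admissible $\overline\pi\in\mathbb H^2_{\mathscr G^{(N)},BMO}$ with wealth $X^{i,\overline\pi}$, introduce the family $R^{i,\overline\pi}_t:=-\exp(-\alpha^i(X^{i,\overline\pi}_t-Y^i_t))$. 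Two properties are immediate: $R^{i,\overline\pi}_0=-\exp(-\alpha^i(x^i-Y^i_0))$ is independent of $\overline\pi$, and the terminal condition $Y^i_T=\theta^i\overline X^{-i}_T$ gives $R^{i,\overline\pi}_T=-\exp(-\alpha^i(X^i_T-\theta^i\overline X^{-i}_T))$. It\^o's formula reproduces the factorization $R^{i,\overline\pi}_t=R^{i,\overline\pi}_0 M^{i,\overline\pi}_t A^{i,\overline\pi}_t$ of Section~1.1, and the completion-of-squares identity leading to \eqref{candidate-equilibrium-N} shows that the integrand of $\log A^{i,\overline\pi}$ is pointwise nonnegative and vanishes precisely when $\overline\pi=\overline\pi^{i,*}$; hence $A^{i,\overline\pi}$ is non-decreasing and $A^{i,\overline\pi^{i,*}}\equiv 1$.

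The main obstacle is to upgrade the local martingale $M^{i,\overline\pi}$ to a uniformly integrable true martingale, which is the one gap left by the introductory heuristics. Its stochastic logarithm is driven by the integrands $-\alpha^i(\overline\pi-\overline Z^i)$ against $d\overline W^i$ and $\alpha^i Z^{ij}$ against $dW^j$ for $j\neq i$. Each of these lies in $\mathbb H^2_{\mathscr G^{(N)},BMO}$: $\overline Z^i$ and $(Z^{ij})_{j\neq i}$ are BMO by Theorem~\ref{thm:CN-wellposedness-exp-multi-BSDE}, $\overline\pi$ is BMO by admissibility, and $\alpha^i$ is bounded by Assumption~\ref{ass:N-player}. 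Kazamaki's criterion then ensures that $M^{i,\overline\pi}$ is a true uniformly integrable martingale, so $R^{i,\overline\pi}$ becomes an honest supermartingale and is a genuine martingale at $\overline\pi=\overline\pi^{i,*}$. Taking $\mathscr G$-conditional expectations yields
\[
\mathbb E\bigl[-e^{-\alpha^i(X^{i,\overline\pi}_T-\theta^i\overline X^{-i}_T)}\,\big|\,\mathscr G\bigr]\le -e^{-\alpha^i(x^i-Y^i_0)}=\mathbb E\bigl[-e^{-\alpha^i(X^{i,\overline\pi^{i,*}}_T-\theta^i\overline X^{-i}_T)}\,\big|\,\mathscr G\bigr],
\]
identifying $\overline\pi^{i,*}$ as a best response to $(\overline\pi^{j,*})_{j\neq i}$ and confirming the announced form of the value function.

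For uniqueness I would fix an arbitrary Nash equilibrium $(\hat\pi^1,\dots,\hat\pi^N)\in(\mathbb H^2_{\mathscr G^{(N)},BMO})^N$ with wealths $\hat X^j$ and rerun the argument of the first paragraph for each $i$ as a one-agent optimisation with opponents frozen at $(\hat\pi^j)_{j\neq i}$. Since $R^{i,\hat\pi^i}$ must be a martingale (not merely a supermartingale) at the optimum, the corresponding $A^{i,\hat\pi^i}$ factor has to be identically $1$, which by the same completion of squares forces $\hat\pi^i=\hat{\overline Z}^i+(\overline b^i)^\top/\alpha^i$ and makes the family $(\hat X^i,\hat Y^i,\hat{\overline Z}^i,(\hat Z^{ij})_{j\neq i})_{i}$ a solution of the coupled FBSDE \eqref{CN:exp-FBSDE-N}. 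The uniqueness statement in Theorem~\ref{thm:CN-wellposedness-exp-multi-BSDE} then identifies $\hat\pi^i$ with $\overline\pi^{i,*}$ for every $i$, so that the equilibrium is unique. Apart from the BMO/Kazamaki upgrade already highlighted, the verification reduces to bookkeeping on the factorization of $R^{i,\overline\pi}$.
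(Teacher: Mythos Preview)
Your verification argument follows the same path as the paper's proof: admissibility of $\overline\pi^{i,*}$ from the BMO estimates of Theorem~\ref{thm:CN-wellposedness-exp-multi-BSDE}, the factorisation $R^{i,\overline\pi}=R^{i,\overline\pi}_0\,M^{i,\overline\pi}A^{i,\overline\pi}$, Kazamaki's criterion (the paper cites \cite[Theorem~2.3]{Kazamaki-2006}) to turn $M^{i,\overline\pi}$ into a true martingale for BMO integrands, and the super/true-martingale dichotomy for $R$. So for the existence half your proposal is correct and essentially identical to the paper.

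For uniqueness you go further than the paper, whose proof does not spell this part out. Your scheme is natural but has a gap you should be aware of. Given an arbitrary equilibrium $(\hat\pi^j)_j\in(\mathbb H^2_{\mathscr G^{(N)},BMO})^N$, the single-agent BSDE you want to ``rerun'' for player $i$ has terminal condition $\theta^i\hat{\overline X}^{-i}_T$, which is \emph{unbounded} (the $\hat\pi^j$ are only BMO). The well-posedness of that one-dimensional quadratic BSDE with unbounded terminal is not covered by Theorem~\ref{thm:CN-wellposedness-exp-multi-BSDE}, which relies precisely on the coupled structure and the transformation \eqref{transformation-FB-to-tilde}--\eqref{transformation-tilde-to-hat} to reduce to bounded terminals. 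Without first constructing $\hat Y^i$ you cannot assert that $R^{i,\hat\pi^i}$ is a martingale, force $A^{i,\hat\pi^i}\equiv 1$, or feed the resulting family back into the uniqueness clause of Theorem~\ref{thm:CN-wellposedness-exp-multi-BSDE}. To close this you would need an independent solvability result for the single-agent quadratic BSDE with a terminal of exponential-moment type (e.g.\ in the spirit of Briand--Hu), or an argument that bypasses the auxiliary BSDE altogether.
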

\begin{proof}
	As a corollary of Theorem \ref{thm:CN-wellposedness-exp-multi-BSDE}, $\overline \pi^{i,*}\in\mathbb H^2_{\mathscr G^{(N)},BMO}$ is admissible. By \cite[Theorem 2.3]{Kazamaki-2006}, $M^{i,\overline \pi^{i,*}}_\cdot=\mathcal E\left(-\alpha^i\int_0^\cdot(\overline \pi^{i,*}_s-\overline Z^i_s)\,d\overline W^i_s+\alpha^i\sum_{j\neq i}\int_0^\cdot Z^{ij}_s\,dW^j_s\right)$ is a martingale, where $\mathcal E(\cdot)$ is the stochastic exponential of $\cdot$. By construction, $A^{i,\overline \pi^{i,*}}_\cdot=
	 e^{-\alpha^i\int_0^\cdot(\overline \pi^{i,*}_s\overline b^i_s-f^i_s)\,ds+\frac{(\alpha^i)^2}{2}\int_0^\cdot|\overline \pi^{i,*}_s-\overline Z^i_s|^2\,ds+\frac{(\alpha^i)^2}{2}\sum_{j\neq i}\int_0^\cdot(Z^{ij}_s)^2\,ds}=0$. The construction of $R^{i,\overline \pi^{i,*}}$ implies it is a martingale. For any $\overline \pi\in\mathbb H^2_{\mathscr G^{(N)},BMO}$, $M^{i,\overline \pi}$ is a martingale and $A^{i,\overline \pi}$ is non-decreasing and thus $R^{i,\overline \pi}$ is a supermartingale. As a result, for all $\overline \pi\in\mathbb H^2_{\mathscr G^{(N)},BMO}$
	 $$\mathbb E[-e^{-\alpha^i(X^{i,\overline p}_T-\theta^i\overline X^{-i,*}_T)}]\leq -e^{-\alpha^i(x^i-Y^i_0)}=\mathbb E[-e^{-\alpha^i(X^{i,\overline p^{i,*}}_T-\theta^i\overline X^{-i,*}_T)}]<\infty.$$
\end{proof}

The next corollary of Theorem \ref{thm:CN-wellposedness-exp-multi-BSDE} and Theorem \ref{thm:verification} shows the unique equilibrium of the $N$-player game converges to the unique equilibrium of the corresponding MFG. To emphasize the dependence on $N$, we denote by $(X^{i,N},Y^{i,N},Z^{i,N},Z^{i0,N},Z^{ij,N},j\neq i)_{i=1,\cdots,N}$ the solution to \eqref{CN:exp-FBSDE-N} and by $(\overline \pi^{1,*,N},\cdots,\overline \pi^{N,*,N})$ the equilibrium of \eqref{wealth-exp-N-i}-\eqref{N-cost-i}.
\begin{corollary}[Convergence to MFG]\label{coro:convergence}
	In addition to Assumption \ref{ass:N-player}, we assume \eqref{ass-theta-benchmark-BSDE}, \eqref{ass-multi-BSDE} and \eqref{ass-multi-BSDE2} hold for all $N$. Moreover, we make the following assumptions on $b^i$ and $(\alpha^i,x^i,\theta^i)$: 
	\begin{equation}\label{ass-corollary-convergence}
		\begin{split}
		&~\{ \alpha^i,x^i,\theta^i\}_i\textrm{  is an i.i.d. sequence and}\\
		&~\textrm{there exist measurable functions } b\textrm{ and }b^0\textrm{ such that }\\
		&~b^i_t=b(t,\alpha^i,x^i,\theta^i,W^i_{\cdot\wedge t},W^0_{\cdot\wedge t})\textrm{ and }b^{i0}_t=b^0(t,\alpha^i,x^i,\theta^i,W^i_{\cdot\wedge t},W^0_{\cdot\wedge t}).	 
		\end{split}
	\end{equation}
Then it holds that
\[
	(X^{i,N},Y^{i,N},Z^{i,N},Z^{i0,N})\rightarrow ( X^{i},Y^i,Z^i,Z^{i0})\textrm{ in }\mathbb S^2\times\mathbb S^2\times \mathbb L^2\times\mathbb L^2 ,
\]
where $( X^i,Y^i,Z^i,Z^{i0})$ is the solution to the following mean field system for player $i$ in MFG \eqref{exp-MFG}
\begin{equation*}
\left\{\begin{split}
X^i_t=&~x^i+\int_0^t\left(\overline Z^i_s+\frac{(\overline b^i_s)^\top}{\alpha^i}\right)(\overline b^i_s\,ds+\,d\overline W^i_s)\\
Y^i_t=&~\theta\mathbb E[X^i_T|\mathcal F^0_T]-\int_t^T\left(\overline Z^i_s\overline b^i_s+\frac{|\overline b^i_s|^2}{2\alpha^i}\right)\,ds-\int_t^T\overline Z^i_s\,d\overline W^i_s.
\end{split}\right.
\end{equation*}
Moreover, the optimal strategy and value function of player $i$ in the $N$-player game converges to those of player $i$ in MFG
\[
	\overline \pi^{i,*,N}\rightarrow\overline \pi^{i,*}\textrm{ in }\mathbb L^2\quad\textrm{  and  }\quad\mathbb E\left[-e^{-\alpha^i(X^{i,N}_T-\theta\overline X^{-i}_T)}\right]\rightarrow \mathbb E\left[-e^{-\alpha^i(X^i_T-\theta\mathbb E[X^i_T])}\right].
\]
\end{corollary}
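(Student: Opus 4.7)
The plan is to exploit the decomposition $\widetilde Y^{i,N}=\widehat Y^{i,N}+\breve Y^{i,N}$ from the proof of Theorem \ref{thm:CN-wellposedness-exp-multi-BSDE} and let $N\to\infty$ in each piece. The key observation is that $\breve Y^{i,N}$ is essentially a mean field BSDE already (its driver contains only conditional expectations and no empirical averages), so under the i.i.d.~assumption \eqref{ass-corollary-convergence} it converges to the transformed MFG BSDE of the representative player, while the correction $\widehat Y^{i,N}$ is forced to vanish by \eqref{estimate-thm-widehat}. After inverting the transformations \eqref{transformation-FB-to-tilde} and \eqref{CN:exp-N-multi-BSDE1} one recovers $Y^{i,N}\to Y^i$, provided the empirical correction integrals $\frac{\theta^i}{N-1}\sum_{j\neq i}\int_0^{\cdot}(\cdots)\,ds$ and $\frac{\theta^i}{N-1}\sum_{j\neq i}\int_0^{\cdot}(\cdots)\,dW^j_s$ converge to their MFG analogues.

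First I would show the benchmark convergence. In \eqref{CN:exp-N-benchmark-BSDE} the only $N$-dependent quantity is the terminal $\theta^i\overline x^{-i}$, which tends a.s.~and in $L^p$ to $\theta^i\mathbb E[x^i]$ by the strong law of large numbers, while the Lipschitz coefficients of the driver are uniformly bounded in BMO in $N$. A standard $L^2$ stability estimate for conditional mean field Lipschitz BSDEs then yields $(\breve Y^{i,N},\breve Z^{i,N},\breve Z^{i0,N})\to(\breve Y^{i,\infty},\breve Z^{i,\infty},\breve Z^{i0,\infty})$ in $\mathbb S^2\times\mathbb H^2\times\mathbb H^2$, and reversing the transformation in the limit identifies the right-hand side with the MFG triple $(Y^i,Z^i,Z^{i0})$ after the appropriate time shift.

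Next, by \eqref{estimate-thm-widehat} the vanishing of $\widehat Y^{i,N},\widehat Z^{ii,N},\widehat Z^{i0,N},\widetilde Z^{ij,N}$ reduces to $\max_iA^{i,N}\to 0$. The pieces with explicit $1/(N-1)^2$ prefactor go to zero trivially, so the core task is $\|\sqrt{|B^{i,N}|}\|^2_{\mathscr G^{(N)},BMO}\to 0$. Every summand of $B^{i,N}$ has the form
\[
\frac{1}{N-1}\sum_{j\neq i}\varphi^j_t-\mathbb E[\varphi^i_t\mid \mathcal F^0_t],
\]
where $\varphi^j$ ranges over e.g.\ $b^j\breve Z^j$, $b^{j0}\breve Z^{j0}$, $|\overline b^j|^2/\alpha^j$, which under \eqref{ass-corollary-convergence} is i.i.d.~conditional on $\mathcal F^0$. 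A conditional strong law of large numbers forces each such difference to zero pointwise; combined with the uniform-in-$N$ BMO bounds on $\breve Z^{i,N},\breve Z^{i0,N}$ from Lemma \ref{lem:benchmark-BSDE}, an envelope plus dominated convergence argument in the essential supremum over $\mathscr G^{(N)}$-stopping times upgrades this to convergence in BMO norm. This upgrade is the main technical obstacle, since BMO convergence requires uniform control over all stopping times rather than ordinary $L^2$ convergence; it is precisely what forces the i.i.d.~assumption \eqref{ass-corollary-convergence} together with the uniform moment bounds already embedded in the framework.

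Finally, combining the two preceding steps gives $\widetilde Y^{i,N}\to Y^i$ and corresponding convergence of the $\widetilde Z$'s. Inverting \eqref{transformation-FB-to-tilde} and \eqref{CN:exp-N-multi-BSDE1} promotes this to $(Y^{i,N},Z^{i,N},Z^{i0,N})\to(Y^i,Z^i,Z^{i0})$ in $\mathbb S^2\times\mathbb L^2\times\mathbb L^2$: the $ds$-correction integrals converge by the same conditional LLN as above, and the stochastic correction integrals $\frac{1}{N-1}\sum_{j\neq i}\int_0^\cdot(\cdots)\,dW^j_s$ vanish in $\mathbb S^2$ by Doob's inequality and It\^o isometry, since their variance is $O(1/N)$. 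Gr\"onwall on \eqref{wealth-exp-N-i} then gives $X^{i,N}\to X^i$ in $\mathbb S^2$; the strategy convergence $\overline\pi^{i,*,N}\to\overline\pi^{i,*}$ in $\mathbb L^2$ follows from \eqref{candidate-equilibrium-N}; and the value function convergence follows from continuity of $y\mapsto -e^{-\alpha^i(x^i-y)}$ at $Y^{i,N}_0\to Y^i_0$, together with exponential integrability supplied by the BMO bounds on the strategies.
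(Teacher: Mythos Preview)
Your decomposition $\widetilde Y^{i,N}=\widehat Y^{i,N}+\breve Y^{i,N}$ and the treatment of the benchmark piece are correct (in fact $\breve Z^{i,N},\breve Z^{i0,N}$ do not depend on $N$ at all, since the driver of \eqref{CN:exp-N-benchmark-BSDE} is $Y$-free and the terminal is $\mathscr G$-measurable, so only $\breve Y^{i,N}$ carries the shift $\theta^i(\overline x^{-i}-\mathbb E[x^i])$). The gap is in your Step~3: the claim $\|\sqrt{|B^{i,N}|}\|_{\mathscr G^{(N)},BMO}\to 0$ is in general \emph{false}, so you cannot conclude $\max_iA^{i,N}\to 0$ and hence cannot read off $\widehat Y^{i,N}\to 0$ directly from \eqref{estimate-thm-widehat}.

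The reason is that the BMO norm carries an $\esssup_\omega$ in front of the conditional expectation, and this essential supremum ranges in particular over all realizations of the i.i.d.\ initial data $(\alpha^j,x^j,\theta^j)_{j}$. A typical summand in $B^{i,N}$ splits, after conditioning on $(\mathcal F^0,\mathscr G)$, into a fluctuation part of order $O(1/\sqrt{N})$ plus a bias part
\[
\frac{1}{N-1}\sum_{j\neq i}\psi_t(\alpha^j,x^j,\theta^j,W^0_{\cdot\wedge t})-\int\psi_t(a,y,\vartheta,W^0_{\cdot\wedge t})\,d\mu(a,y,\vartheta),
\]
where $\mu$ is the law of $(\alpha,x,\theta)$. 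The law of large numbers drives this bias to zero in $L^2$ and a.s., but \emph{not} in $\esssup_\omega$: for every finite $N$ the event ``all $(\alpha^j,x^j,\theta^j)$ lie in a fixed small neighbourhood of an extreme point of the support'' has positive probability, and on that event the empirical average misses the population mean by a fixed amount independent of $N$. Dominated convergence cannot help here because there is no integral to dominate---you are taking a pointwise supremum. Consequently the BMO norm of $B^{i,N}$ need not vanish, and the upper bound in Lemma~\ref{lem:estimate-B} indeed does not tend to zero either.

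The paper avoids this by abandoning BMO in the convergence step and working entirely in $L^2$. One applies It\^o's formula to $(\widehat Y^{i,N})^2$ in \eqref{CN:exp-N-multi-BSDE4} and takes \emph{unconditional} expectations, getting an inequality with a right-hand side containing $\mathbb E\int_0^T|B^{i,N}_s|^2\,ds$, various cross-terms in $\widehat Z^{jj,N},\widehat Z^{j0,N}$, and the quadratic $\sum_{j\neq i}(\widetilde Z^{ij,N})^2$ term. The BMO estimate \eqref{estimate-thm-widehat} is used only as an \emph{a priori bound}: it furnishes $\|\widehat Y^{i,N}\|_\infty\le\sqrt 2\,A^N$ and, via the smallness hypothesis \eqref{ass-multi-BSDE}, $2\sqrt 2\|\alpha\|A^N\le 1/40$, which is exactly what is needed to absorb the quadratic term on the left. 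One then averages over $i$, applies Gr\"onwall, and feeds in the $L^2$ convergence $\mathbb E\int_0^T|B^{i,N}_s|^2\,ds\to 0$ from Lemma~\ref{lem:estimate-B} (which \emph{does} follow from the conditional LLN, because it is an honest expectation). This yields the averaged convergence \eqref{convergence-average}, which is then bootstrapped back into the individual-$i$ estimate. The remaining steps---inverting the transformations, handling the empirical correction integrals, and deducing convergence of strategies and values---proceed as you outlined.
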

\begin{proof}

The assumption \eqref{ass-corollary-convergence} implies for all $j\neq i$
\[
	\|\theta^i\|=\|\theta^j\|:=\|\theta\|,\quad \|\alpha^i\|=\|\alpha^j\|:=\|\alpha\|,\quad \|b^i\|_\infty=\|b^j\|_\infty:=\|b\|_\infty,\quad \|b^{i0}\|_\infty=\|b^{j0}\|_\infty:=\|b^0\|_\infty.
\]
As a result,
\[
	A^{i,N}=A^{j,N}:=A^N.
\]
Applying It\^o's formula to \eqref{CN:exp-N-multi-BSDE4}, taking expectations on both sides, using H\"older's inequality and Young's inequality, one has
\begin{align}\label{estimate:convergence-1}
&~\mathbb E(\widehat Y^{i,N}_t)^2+\mathbb E \left[\int_t^T(\widehat Z^{ii,N}_s)^2\,ds\right]+\mathbb E \left[\int_t^T(\widehat Z^{i0,N}_s)^2\,ds\right]+\sum_{j\neq i}\mathbb E\left[\int_t^T(\widetilde Z^{ij,N}_s)^2\,ds\right]\nonumber\\
\leq&~2\mathbb E\left[\int_t^T |\widehat Y^{i,N}_s|\left|b^i_s\widehat Z^{ii,N}_s-\frac{\alpha^i}{2}\sum_{j\neq i}\left(\widetilde Z^{ij,N}_s+\theta^i\frac{\widehat Z^{jj,N}_s+\breve Z^{j}_s+\frac{b^j_s}{\alpha^j}}{N-1}\right)^2-\frac{\theta^i}{N-1}\sum_{j\neq i}b^j_s\widehat Z^{jj,N}_s\right| \,ds\right]\nonumber\\
&~+2\mathbb E\left[\int_t^T|\widehat Y^{i,N}_s| \left|\frac{b^{i0}_s\widehat Z^{i0,N}_s}{1+\frac{\theta^i}{N-1}}+\frac{\theta^i b^{i0}_s}{(1+
	\frac{\theta^i}{N-1})}\frac{1}{N-1}\frac{\sum_{j=1}^N\frac{\widehat Z^{j0,N}_s}{1+\frac{\theta^j}{N-1}}}{1-\sum_{j=1}^N\frac{\theta^j}{N-1+\theta^j}}-\frac{\theta^i}{N-1}\sum_{j\neq i}\frac{b^{j0}_s\widehat Z^{j0,N}_s }{1+\frac{\theta^j}{N-1}} \right| \,ds\right]\nonumber\\
&~+2\mathbb E\left[\int_t^T|\widehat Y^{i,N}_s| \left| -\frac{\theta^i}{N-1}\sum_{j\neq i}b^{j0}_s\frac{\frac{\theta^j}{N-1}\frac{\sum_{k=1}^N\frac{\widehat Z^{k0,N}_s}{1+\frac{\theta^k}{N-1}}}{1-\sum_{k=1}^N\frac{\theta^k}{N-1+\theta^k}}}{1+\frac{\theta^j}{N-1}}+B^{i,N}_s \right| \,ds\right]\nonumber\\
\leq&~\frac{1}{10}\mathbb E\left[\int_t^T(\widehat Z^{ii,N}_s)^2\,ds\right]+10\|b\|^2_\infty\mathbb E\left[\int_t^T(\widehat Y^{i,N}_s)^2\,ds\right]+2\|\alpha\|\|\widehat Y^{i,N}\|_\infty\sum_{j\neq i}\mathbb E\left[\int_t^T(\widetilde Z^{ij,N}_s)^2\,ds\right]\nonumber\\
&~+2\|\alpha\|\|\theta\|^2\|\widehat Y^{i,N}\|_\infty\frac{1}{(N-1)^2}\sum_{j\neq i}\mathbb E\left[\int_t^T\left(\widehat Z^{jj,N}_s+\breve Z^{j}_s+\frac{b^j_s}{\alpha^j}\right)^2\,ds\right]\nonumber\\
&~+\frac{1}{10(N-1)}\sum_{j\neq i}\mathbb E\left[\int_t^T(\widehat Z^{jj,N}_s)^2\,ds\right]+10\|\theta\|^2\|b\|^2_\infty\mathbb E\left[\int_t^T(\widehat Y^{i,N}_s)^2\,ds\right]\nonumber\\
&~+\frac{1}{10}\mathbb E\left[\int_t^T(\widehat Z^{i0,N}_s)^2\,ds\right]+10\|b^{0}\|^2_\infty\mathbb E\left[\int_t^T(\widehat Y^{i,N}_s)^2\,ds\right]\nonumber\\
&~+\frac{1}{10}\left(\frac{1}{N-1}+\frac{1}{(N-1)^2}\right)\sum_{j=1}^N\mathbb E\left[\int_t^T(\widehat Z^{j0,N}_s)^2\,ds\right]+\frac{40\|\theta\|^2\|b^{0}\|_\infty^2}{(1-\|\theta\|)^2}\mathbb E\left[\int_t^T(\widehat Y^{i,N}_s)^2\,ds\right]\nonumber\\
&~+\frac{1}{10(N-1)}\sum_{j\neq i}\mathbb E\left[\int_t^T(\widehat Z^{j0,N}_s)^2\,ds\right]+10\|\theta\|^2\|b^{0}\|^2_\infty\mathbb E\left[\int_t^T(\widehat Y^{i,N}_s)^2\,ds\right]\nonumber\\
&~+\frac{1}{10}\left(\frac{1}{N-1}+\frac{1}{(N-1)^2}\right)\sum_{j=1}^N\mathbb E\left[\int_t^T(\widehat Z^{j0,N}_s)^2\,ds\right]+\frac{40\|\theta\|^4\|b^{0}\|^2_\infty}{(1-\|\theta\|)^2}\mathbb E\left[\int_t^T(\widehat Y^{i,N}_s)^2\,ds\right]\nonumber\\
&~+\mathbb E\left[\int_t^T|\widehat Y^{i,N}_t|^2\,dt\right|+\mathbb E\left[\int_0^T(B^{i,N}_s)^2\,ds\right].
\end{align}
Taking average over $i$ from $1$ to $N$ and noting the estimate \eqref{estimate-thm-widehat}, one has
\begin{align*}
		&~\frac{1}{N}\sum_{i=1}^N\mathbb E(\widehat Y^{i,N}_t)^2+\frac{1}{N}\sum_{i=1}^N\mathbb E \left[\int_t^T(\widehat Z^{ii,N}_s)^2\,ds\right]+\frac{1}{N}\sum_{i=1}^N\mathbb E \left[\int_t^T(\widehat Z^{i0,N}_s)^2\,ds\right]+\frac{1}{N}\sum_{i=1}^N\sum_{j\neq i}\mathbb E\left[\int_t^T(\widetilde Z^{ij,N}_s)^2\,ds\right]\nonumber\\
		\leq&~\left(10\|\overline b^{}\|^2_\infty+10\|\theta\|^2\|\overline b\|^2_\infty+\frac{40\|\theta\|^2\|b^{0}\|_\infty^2}{(1-\|\theta\|)^2}+\frac{40\|\theta\|^4\|b^{0}\|^2_\infty}{(1-\|\theta\|)^2}+1\right)\frac{1}{N}\sum_{i=1}^N\mathbb E\left[\int_t^T(\widehat Y^{i,N}_s)^2\,ds\right]\\
		&~+\frac{3}{10}\frac{1}{N}\sum_{i=1}^N\mathbb E\left[\int_t^T(\widehat Z^{ii,N}_s)^2\,ds\right]+\frac{7}{10}\frac{1}{N}\sum_{i=1}^N\mathbb E\left[\int_t^T(\widehat Z^{i0,N}_s)^2\,ds\right]+2\sqrt 2\|\alpha\| A^{N}\frac{1}{N}\sum_{i=1}^N\sum_{j\neq i}\mathbb E\left[\int_t^T(\widetilde Z^{ij,N}_s)^2\,ds\right]\\
		&~+\frac{2\sqrt 2\|\alpha\|\|\theta\|^2A^{N}}{(N-1)^2}\frac{1}{N}\sum_{i=1}^N\sum_{j\neq i}\mathbb E\left[\int_0^T\left(\widehat Z^{jj,N}_s+\breve Z^j_s+\frac{b^j_s}{\alpha^j}\right)^2\,ds\right]+\frac{1}{5(N-1)^2}\sum_{j=1}^N\mathbb E\left[\int_0^T(\widehat Z^{j0,N}_s)^2\,ds\right]\\
		&~+\frac{1}{N}\sum_{i=1}^N\mathbb E\left[\int_0^T(B^{i,N}_s)^2\,ds\right].
\end{align*}
The second inequality in \eqref{ass-multi-BSDE} implies $2\sqrt 2\|\alpha^i\|A^{N}\leq \frac{1}{40}$. Thus, by rearranging terms we have
\begin{align}\label{estimate:convergence-2}
&~\frac{1}{N}\sum_{i=1}^N\mathbb E(\widehat Y^{i,N}_t)^2+\frac{7}{10}\frac{1}{N}\sum_{i=1}^N\mathbb E \left[\int_t^T(\widehat Z^{ii,N}_s)^2\,ds\right]\nonumber\\
&~+\frac{3}{10}\frac{1}{N}\sum_{i=1}^N\mathbb E \left[\int_t^T(\widehat Z^{i0,N}_s)^2\,ds\right]+\frac{39}{40}\frac{1}{N}\sum_{i=1}^N\sum_{j\neq i}\mathbb E\left[\int_t^T(\widetilde Z^{ij,N}_s)^2\,ds\right]\nonumber\\
\leq&~\left(10\|\overline b^{}\|^2_\infty+10\|\theta\|^2\|\overline b\|^2_\infty+\frac{40\|\theta\|^2\|b^{0}\|_\infty^2}{(1-\|\theta\|)^2}+\frac{40\|\theta\|^4\|b^{0}\|^2_\infty}{(1-\|\theta\|)^2}+1\right)\frac{1}{N}\sum_{i=1}^N\mathbb E\left[\int_t^T(\widehat Y^{i,N}_s)^2\,ds\right]\nonumber\\
&~+\frac{2\sqrt 2\|\alpha\|\|\theta\|^2A^{N}}{(N-1)^2}\frac{1}{N}\sum_{i=1}^N\sum_{j\neq i}\mathbb E\left[\int_0^T\left(\widehat Z^{jj,N}_s+\breve Z^j_s+\frac{b^j_s}{\alpha^j}\right)^2\,ds\right]\nonumber\\
&~+\frac{1}{5(N-1)^2}\sum_{j=1}^N\mathbb E\left[\int_0^T(\widehat Z^{j0,N}_s)^2\,ds\right]+\frac{1}{N}\sum_{i=1}^N\mathbb E\left[\int_0^T(B^{i,N}_s)^2\,ds\right].
\end{align}
Gr\"onwall's inequality and Lemma \ref{lem:estimate-B} imply for each $t\in[0,T]$
\begin{equation*}
	\begin{split}
		&~\frac{1}{N}\sum_{i=1}^N\mathbb E(\widehat Y^{i,N}_t)^2\\
		\leq&~\left\{\frac{2\sqrt 2\|\alpha\|A^{N}}{(N-1)^2}\frac{1}{N}\sum_{i=1}^N\sum_{j\neq i}\mathbb E\left[\int_0^T\left(\widehat Z^{jj,N}_s+\breve Z^j_s+\frac{b^j_s}{\alpha^j}\right)^2\,ds\right]+\frac{1}{5(N-1)^2}\sum_{j=1}^N\mathbb E\left[\int_0^T(\widehat Z^{j0,N}_s)^2\,ds\right]\right.\\
		&~\left.+\frac{1}{N}\sum_{i=1}^N\mathbb E\left[\int_0^T(B^{i,N}_s)^2\,ds\right]\right\}\exp\left\{10\|\overline b^{}\|^2_\infty+10\|\theta\|^2\|\overline b\|^2_\infty  \textcolor{white}{\frac{0}{1}} \right.\\
		&~\left.+\frac{40\|\theta\|^2\|b^{0}\|_\infty^2}{(1-\|\theta\|)^2}+\frac{40\|\theta\|^4\|b^{0}\|^2_\infty}{(1-\|\theta\|)^2}+1 \right\}\\
		\rightarrow&~0\qquad\textrm{as }N\rightarrow\infty.
	\end{split}
\end{equation*}
Going back to \eqref{estimate:convergence-2} we get as $N\rightarrow\infty$
\begin{equation}\label{convergence-average}
	\begin{split}
\frac{1}{N}\sum_{i=1}^N\mathbb E \left[\int_0^T(\widehat Z^{ii,N}_s)^2\,ds\right]+\frac{1}{N}\sum_{i=1}^N\mathbb E \left[\int_0^T(\widehat Z^{i0,N}_s)^2\,ds\right]+\frac{1}{N}\sum_{i=1}^N\sum_{j\neq i}\mathbb E\left[\int_0^T(\widetilde Z^{ij,N}_s)^2\,ds\right]\rightarrow 0.
	\end{split}
\end{equation}
Now we turn to the estimate \eqref{estimate:convergence-1}, from which by rearranging terms we obtain
\begin{align*}
&~\mathbb E(\widehat Y^{i,N}_t)^2+\frac{9}{10}\mathbb E \left[\int_t^T(\widehat Z^{ii,N}_s)^2\,ds\right]+\frac{9}{10}\mathbb E \left[\int_t^T(\widehat Z^{i0,N}_s)^2\,ds\right]+\frac{39}{40}\sum_{j\neq i}\mathbb E\left[\int_t^T(\widetilde Z^{ij,N}_s)^2\,ds\right]\\
\leq&~\left(10\|\overline b^{}\|^2_\infty+10\|\theta\|^2\|\overline b\|^2_\infty+\frac{40\|\theta\|^2\|b^{0}\|_\infty^2}{(1-\|\theta\|)^2}+\frac{40\|\theta\|^4\|b^{0}\|^2_\infty}{(1-\|\theta\|)^2}+1\right)\mathbb E\left[\int_t^T(\widehat Y^{i,N}_s)^2\,ds\right]\\
&~+2\|\alpha\|\|\widehat Y^{i,N}\|_\infty\frac{1}{(N-1)^2}\sum_{j\neq i}\mathbb E\left[\int_0^T\left(\widehat Z^{jj,N}_s+\breve Z^{j,N}_s+\frac{b^j_s}{\alpha^j}\right)^2\,ds\right]+\frac{1}{10(N-1)}\sum_{j\neq i}\mathbb E\left[\int_0^T(\widehat Z^{jj,N}_s)^2\,ds\right]\\
&~+\frac{1}{5}\left(\frac{1}{N-1}+\frac{1}{(N-1)^2}\right)\sum_{j=1}^N\mathbb E\left[\int_0^T(\widehat Z^{j0,N}_s)^2\,ds\right]+\frac{1}{10(N-1)}\sum_{j\neq i}\mathbb E\left[\int_0^T(\widehat Z^{j0,N}_s)^2\,ds\right]\\
&~+\mathbb E\left[\int_0^T(B^{i,N}_s)^2\,ds\right],
\end{align*}
from which Gr\"onwall's inequality, Lemma \ref{lem:estimate-B} and \eqref{convergence-average} imply as $N\rightarrow\infty$
\[
	\sup_{t\in[0,T]}\mathbb E(\widehat Y^{i,N}_t)^2+\mathbb E \left[\int_0^T(\widehat Z^{ii,N}_s)^2\,ds\right]+\mathbb E \left[\int_0^T(\widehat Z^{i0,N}_s)^2\,ds\right]+\sum_{j\neq i}\mathbb E\left[\int_0^T(\widetilde Z^{ij,N}_s)^2\,ds\right]\rightarrow 0.
\]
Applying It\^o's formula again and using the above convergence we have
\[
	\lim_{N\rightarrow\infty}\mathbb E\left[\sup_{0\leq t\leq T}(\widehat Y^{i,N}_t)^2\right]=0.
\]
Using the transformation \eqref{transformation-FB-to-tilde} and \eqref{transformation-tilde-to-hat} we get the desired convergence results.
\end{proof}

\section{Mean Field Games}\label{sec:MFG-exp}
In Section \ref{sec:exp-N} we solved the $N$-player game \eqref{wealth-exp-N-i}-\eqref{N-cost-i} and showed its convergence to the MFG \eqref{exp-MFG} under restrictive assumptions. In this section we solve the MFG \eqref{exp-MFG} directly under a mild assumption on $\theta$. Moreover, when the stock managed by each player is independent of each other,  we show in Section \ref{sec:MFG-IA-exp} that the competition factor $\theta$ can be relaxed to be a random variable that is adapted to the largest $\sigma$-algebra. In particular, we consider the mean field FBSDE \eqref{FBSDE-exp-IA}, whose solvability has its own interest. 

Let $\mathscr G$ be a $\sigma$-algebra independent of $(W,W^0)$ and denote $\mathscr F:=(\mathscr F_t)_{0\leq t\leq T}:=\mathbb F\vee\mathscr G$, where we recall $\mathbb F$ is the filtration generated by $(W,W^0)$.
\subsection{The MFG with Two Risky Assets}\label{sec:MFG-exp-CN}
\begin{assumption}[Assumptions for MFGs with Two Assets]\label{ass:MFG}
\begin{itemize}
	\item The competition factor $\theta$, the initial wealth of the representative player $\mathcal X$ and  the risk aversion parameter $\alpha$ are $\mathscr G$ random variables. Moreover, $\theta$ is valued in $[0,1)$, $\mathcal X\in L^2$ and $\alpha$ is positively valued.
	\item The random return rate for the representative player is bounded, i.e., $b\in\mathbb L^\infty_{\mathscr F}$.
\end{itemize}
\end{assumption}

\begin{theorem}\label{thm:MFG-CA}
	Let Assumption \ref{ass:MFG} hold.
	Then there exists a unique $(X,Y,Z,Z^0)\in\mathbb S^2_{\mathscr F}\times\mathbb S^2_{\mathscr F}\times \mathbb H^2_{\mathscr F,BMO}\times\mathbb H^2_{\mathscr F,BMO}$ satisfying \eqref{FBSDE-exponential-common-noise}. Consequently, there exists a unique equilibrium of the MFG \eqref{exp-MFG} if the space of admissible strategies for the representative player is $\mathbb H^2_{\mathscr F,BMO}$.
\end{theorem}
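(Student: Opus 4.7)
The plan is to mirror the strategy used for the $N$-player system in Section~\ref{sec:exp-N}, but in the substantially simpler MFG setting where only a single representative player and one $\mathcal F^0_T$-conditional mean appear. First I would convert the mean field FBSDE \eqref{FBSDE-exponential-common-noise} into an equivalent conditional McKean--Vlasov BSDE with bounded terminal data, solve it by a Lipschitz fixed point, and then invert the transformation. A verification step analogous to Theorem~\ref{thm:verification} then identifies the equilibrium.

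\textbf{Step 1 (Transformation).} Since $\mathscr G$ and $W$ are independent of $\mathcal F^0$, the conditional It\^o isometry together with the $\mathcal F^0$-projection identity $\mathbb E[\int_0^T H_s\,dW^0_s\mid\mathcal F^0_T]=\int_0^T\mathbb E[H_s\mid\mathcal F^0_s]\,dW^0_s$ give the explicit decomposition
\begin{align*}
\mathbb E[X_T\mid\mathcal F^0_T]=\mathbb E[\mathcal X]+\int_0^T\mathbb E\Big[\Big(\overline Z_s+\tfrac{\overline b_s^\top}{\alpha}\Big)\overline b_s\,\Big|\,\mathcal F^0_s\Big]\,ds+\int_0^T\mathbb E\Big[Z^0_s+\tfrac{b^0_s}{\alpha}\,\Big|\,\mathcal F^0_s\Big]\,dW^0_s.
\end{align*}
Subtracting $\theta$ times the two path integrals from $Y_t$ defines $\widetilde Y_t$, for which $\widetilde Y_T=\theta\,\mathbb E[\mathcal X]\in L^\infty$. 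Setting $\widetilde Z=Z$ and $\widetilde Z^0=Z^0-\theta\,\mathbb E[Z^0+b^0/\alpha\mid\mathcal F^0]$, taking $\mathbb E[\cdot\mid\mathcal F^0]$ of the last relation, and using that $\theta$ is $\mathscr G$-measurable, independent of $\mathcal F^0$, with $\mathbb E[\theta]<1$ (since $\theta\in[0,1)$ a.s.), this linear relation inverts explicitly to
\begin{align*}
Z^0=\widetilde Z^0+\frac{\theta}{1-\mathbb E[\theta]}\Big(\mathbb E[\widetilde Z^0\mid\mathcal F^0]+\mathbb E[b^0/\alpha\mid\mathcal F^0]\Big),
\end{align*}
which is the MFG analogue of the linear inversion preceding \eqref{CN:exp-N-multi-BSDE3}. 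Substituting back turns \eqref{FBSDE-exponential-common-noise} into a conditional McKean--Vlasov BSDE for $(\widetilde Y,\widetilde Z,\widetilde Z^0)$ whose driver is affine in $(\widetilde Z,\widetilde Z^0,\mathbb E[\widetilde Z^0\mid\mathcal F^0])$ with coefficients controlled by $\|b\|_\infty+\|b^0\|_\infty$, and whose terminal datum $\theta\,\mathbb E[\mathcal X]$ is bounded.

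\textbf{Step 2 (Well-posedness and verification).} Since this transformed BSDE is Lipschitz with a bounded terminal, a Picard iteration (or, equivalently, a Girsanov change of measure absorbing the $\overline b$-linear part) yields a unique solution with $\widetilde Y\in\mathbb S^\infty_{\mathscr F}$ and $(\widetilde Z,\widetilde Z^0)\in\mathbb H^2_{\mathscr F,BMO}\times\mathbb H^2_{\mathscr F,BMO}$, parallel to Lemma~\ref{lem:benchmark-BSDE} but without the quadratic-in-$Z$ term. Reverting the transformation, the explicit formula for $Z^0$ preserves BMO, while $X,Y\in\mathbb S^2_{\mathscr F}$ follow from $\mathcal X\in L^2$, boundedness of $\overline b$, and BMO integrability of $\overline Z$; uniqueness of the equilibrium reduces to uniqueness of the FBSDE. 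The martingale optimality principle then transposes verbatim from Theorem~\ref{thm:verification}: since $\overline\pi^*-\overline Z=\overline b^\top/\alpha$ is bounded, $M^{\overline\pi^*}$ is a true martingale by Kazamaki, $A^{\overline\pi^*}\equiv1$ by construction, and $R^{\overline\pi}_t=-e^{-\alpha(X^{\overline\pi}_t-Y_t)}$ is a supermartingale for every admissible $\overline\pi\in\mathbb H^2_{\mathscr F,BMO}$. The fixed point $\mu=\mathbb E[X^*_T\mid\mathcal F^0_T]$ is automatic since it is built into the construction in Step~1.

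\textbf{Main obstacle.} The delicate part is Step~1: rigorously justifying the conditional It\^o identities against $W$ and $W^0$ when the integrands depend on both Brownian motions as well as on $\mathscr G$, and then tracking how the factor $\theta/(1-\mathbb E[\theta])$ interacts with the BMO bounds. Once this is in place, the remaining analysis is substantially easier than its $N$-player counterpart, which is why the MFG needs only the mild hypothesis $\theta\in[0,1)$ rather than the smallness assumptions \eqref{ass-theta-benchmark-BSDE}--\eqref{ass-multi-BSDE2}.
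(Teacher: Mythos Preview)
Your proposal follows essentially the same route as the paper: the transformation in Step~1 matches exactly (including the inversion formula for $Z^0$ in terms of $\widetilde Z^0$), and the verification in Step~2 is the same martingale-optimality argument. The one point you gloss over is how to obtain $(\widetilde Z,\widetilde Z^0)\in\mathbb H^2_{\mathscr F,BMO}$ \emph{without} any smallness assumption on $\overline b$. Lemma~\ref{lem:benchmark-BSDE}, which you invoke, required condition~\eqref{ass-theta-benchmark-BSDE} precisely so that the $\|b\|_{BMO}^2\|\widetilde Z\|_{BMO}^2$-type cross terms arising from the energy inequality can be absorbed into the left side; a bare Picard iteration or a Girsanov change removing the $\overline b$-linear part does not obviously bypass this, since the conditional-mean terms $\mathbb E[\,\cdot\,|\mathcal F^0]$ survive and still couple the estimate.

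The paper's remedy is a time-partition: choose $\delta>0$ small enough that $\|b\|_\infty^2\delta$ and $\|b^0\|_\infty^2\delta$ satisfy the analogues of the first line of~\eqref{ass-theta-benchmark-BSDE} (this is \eqref{ass-MFG-CA-delta}), run the Lemma~\ref{lem:benchmark-BSDE}-type estimate on $[T-\delta,T]$ to obtain $\|\widetilde Y\|_{[T-\delta,T],\infty}+\|(\widetilde Z,\widetilde Z^0)\|_{\mathscr F,[T-\delta,T],BMO}<\infty$, then use $\widetilde Y_{T-\delta}\in L^\infty$ as the new terminal datum on $[T-2\delta,T-\delta]$, and iterate finitely many times to cover $[0,T]$. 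This localisation step is the substantive content of the proof beyond the transformation, and it is exactly what allows Assumption~\ref{ass:MFG} to be so much milder than \eqref{ass-theta-benchmark-BSDE}--\eqref{ass-multi-BSDE2}.
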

\begin{proof}
The proof is similar to that of Theorem \ref{thm:CN-wellposedness-exp-multi-BSDE} so we sketch it.  Let 
\[
	\widetilde Y=Y-\theta\int_0^t\mathbb E\left[\left.\overline Z_s\overline b_s+\frac{|\overline b_s|^2}{\alpha}\right|\mathcal F^0_s\right]\,ds-\theta\int_0^t\mathbb E\left[\left.Z^0_s+\frac{b^0_s}{\alpha}\right|\mathcal F^0_s\right]\,dW^0_s,
\]
which implies 
\begin{equation*}
	\left\{\begin{split}
		d\widetilde Y_t=&~\left\{b_t\widetilde  Z_t+\frac{|\overline b_t|^2}{2\alpha}+b^{0}_t\left(\widetilde Z^{0}_t+\frac{\theta}{1-\mathbb E[\theta]}\mathbb E[\widetilde Z^{0}_t|\mathcal F^0_t]+\frac{\theta}{1-\mathbb E[\theta]}\mathbb E\left[\left.\frac{b^{0}_t}{\alpha}\right|\mathcal F^0_t\right]\right)\right.\\
		&~\left.-\theta\mathbb E\left[\left.\frac{|\overline b_t|^2}{\alpha}\right|\mathcal F^0_t\right]-\theta\mathbb E[b_t\widetilde Z_t|\mathcal F^0_t]-\theta\mathbb E[b^{0}_t\breve Z^{0}_t|\mathcal F^0_t]-\frac{\theta}{1-\mathbb E[\theta]}\mathbb E[\theta b^{0}_t|\mathcal F^0_t]\mathbb E[\widetilde Z^{0}_t|\mathcal F^0_t]\right.\\
		&~\left.-\frac{\theta}{1-\mathbb E[\theta]}\mathbb E[\theta b^{0}_t|\mathcal F^0_t]\mathbb E\left[\left.\frac{b^{0}_t}{\alpha}\right|\mathcal F^0_t\right]\right\}\,dt+\widetilde Z_t\,dW_t+\widetilde Z^{0}_t\,dW^0_t,\\
		\widetilde Y_T=&~\theta\mathbb E[\mathcal X],
	\end{split}\right.
\end{equation*}
with $\widetilde Z_t=Z_t$ and  $\widetilde Z^0_t=Z^0_t-\theta\mathbb E[Z^0_t+\frac{b^0_t}{\alpha}|\mathcal F^0_t]\Leftrightarrow Z^0_t=\widetilde Z^0_t+\theta\frac{\mathbb E[\widetilde Z^0_t|\mathcal F^0_t]+\mathbb E[\frac{b^0_t}{\alpha}|\mathcal F^0_t]}{1-\mathbb E[\theta]}$. The well-posedness of the above conditional mean field BSDE in $\mathbb S^2_{\mathscr F}\times\mathbb H^2_{\mathscr F,BMO}\times\mathbb H^2_{\mathscr F,BMO}$ is obvious due to the Lipschitz property of the driver. In order to make the candidate strategy admissible, we need to show $(\widetilde Z,\widetilde Z^0)\in\mathbb H^2_{\mathscr F,BMO}\times\mathbb H^2_{\mathscr F,BMO}$. This can be done by the same approach as that in the proof of Lemma \ref{lem:benchmark-BSDE}. However, in MFG the complexity of the problem is significantly reduced and we can establish the well-posedness result under mild assumptions; see Assumption \ref{ass:MFG}. In particular, we do not need additional assumptions on $\overline b$ like Lemma \ref{lem:benchmark-BSDE}. Indeed, let $\delta$ be a constant small enough such that
\begin{equation}\label{ass-MFG-CA-delta}
\begin{split}
10\|b\|^2_{\infty}\delta \leq \frac{1}{8},\quad 20\left(1+ \frac{\|\theta\|^2}{(1-\mathbb E[\theta])^2}\right)\|b^{0}\|^2_{\infty}\delta\leq \frac{1}{8}.
\end{split}
\end{equation}
Then on $[T-\delta,T]$ the same argument leading to \eqref{estimate-thm-widehat} implies 
\begin{equation*}
	\begin{split}
	&~\frac{3}{10}\|\widetilde Y\|^2_{[T-\delta,T],\infty}+\frac{3}{4}\|\widetilde Z\|_{\mathscr F,[T-\delta,T],BMO}^2+\frac{3}{4}\|\widetilde Z^0\|_{\mathscr F,[T-\delta,T],BMO}^2\\
	\leq&~\frac{5}{2}\left\|\frac{\overline b}{\sqrt{\alpha}}\right\|^4_{\mathscr F,[T-\delta,T],BMO}+\frac{40\|\theta\|^2}{(1-\mathbb E[\theta])^2}\|b^0\|^2_{\mathscr F,[T-\delta,T],BMO}\left\|\frac{b^{0}}{\sqrt{\alpha}}\right\|_{\mathscr F,[T-\delta,T],BMO}^2\\
	&~+10\|\theta\|^2\left\|\frac{b}{\sqrt\alpha}\right\|^4_{\mathscr F,[T-\delta,T],BMO}+\|\theta\|^2(\mathbb E[\mathcal X])^2.
	\end{split}
\end{equation*}
In consideration of \eqref{ass-MFG-CA-delta}, repeating the argument finitely many times implies $\|\widetilde Z\|_{\mathscr F,BMO}+\|\widetilde Z^0\|_{\mathscr F,BMO}<\infty$.

With the wellposedness result especially $(\widetilde Z,\widetilde Z^0)$ located in the BMO space, the verification can be done by the proof of \cite[Theorem 7]{HIM-2005}.

\end{proof}
\begin{remark}
In Theorem \ref{thm:MFG-CA}, $\theta$ is only assumed to be away from $1$. The assumption is significantly reduced compared with Section \ref{sec:exp-N}. It provides sufficient reason to consider MFG although the $N$-player game is solvable. 
\end{remark}

\subsection{MFGs with Independent Assets}\label{sec:MFG-IA-exp}
In this part, we consider the MFG \eqref{exp-MFG-IA}, where each player manages one stock whose dynamics is independent of the others. In this game, we allow the competition factor $\theta$ to be measurable w.r.t. the largest $\sigma$-algebra $\mathscr F_T$, where we recall $\mathscr F_T=\mathcal F_T\vee\mathscr G$. Note that there is no common noise in this section and we use the notation $\mathcal F$ to stand for the filtration generated by the individual noise $W$, without any confusion.

 As $\theta$ is measurable w.r.t. $\mathscr F_T$, in the second step of \eqref{exp-MFG} $\theta\mu$ can be viewed as a general liability like that in \cite[Section 2]{HIM-2005}. The same argument as in  \cite[Section 2]{HIM-2005} implies the optimal strategy and the value function of the representative player can be characterized by

\begin{equation}\label{MF-FBSDE}
\left\{\begin{split}
X_t=&~\mathcal X+\int_0^t\left(b_sZ_s+\frac{|b_s|^2}{\alpha}\right)\,ds+\int_0^t\left(Z_s+\frac{b_s}{\alpha}\right)\,dW_s\\
Y_t=&~\theta\mathbb E[X_T]-\int_t^T\left(b_sZ_s+\frac{|b_s|^2}{2\alpha}\right)\,ds-\int_t^TZ_s\,dW_s.
\end{split}\right.
\end{equation}
The well-posedness of \eqref{MF-FBSDE} can be established in short time intervals or under the assumption that the return rate $b$ is small enough. But it is more reasonable to establish the global well-posedness by assuming that the relative performance factor $\theta$ is small enough. This weak interaction assumption is common in the game theory literature; see \cite{FGHP-2018,FH-2018,Herdegen2019,Horst-2005} among others. 

First, we establish the well-posedness result of \eqref{MF-FBSDE} on a short time interval by a contraction argument.
\begin{proposition}\label{prop:short-time}
	Let Assumption \ref{ass:MFG} hold with $\theta$'s measurability w.r.t. $\mathscr G$ replaced by w.r.t. $\mathscr F_T$. For any $\delta$ satisfying
	\begin{equation}\label{ass-delta-in-prop}
		\delta<\frac{1}{	\left(\frac{4}{3}\|\theta\|^2+\frac{16}{3}\|b\|^2_\infty\|\theta\|^2e^{4\|b\|^2_\infty}\right)\|b\|^2_\infty}   \wedge 1\wedge T,
	\end{equation}
	there exists a unique $(X,Y,Z)\in\mathbb S^2_{\mathscr F}( [T-\delta,T])\times\mathbb S^2_{\mathscr F}( [T-\delta,T])\times\mathbb L^2_{\mathscr F}([T-\delta,T])$ satisfying \eqref{MF-FBSDE} on $[T-\delta,T]$.	
\end{proposition}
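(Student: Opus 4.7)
The plan is a Banach fixed point argument for the scalar parameter $\mu=\mathbb E[X_T]$, decoupling the FBSDE. Fix an $\mathscr F_{T-\delta}$-measurable initial condition $\xi \in L^2$ at time $T-\delta$. For each $\mu \in \mathbb R$ I would first solve the linear BSDE
\[
Y_t = \theta\mu - \int_t^T\left(b_s Z_s + \frac{|b_s|^2}{2\alpha}\right)ds - \int_t^T Z_s\,dW_s, \quad t \in [T-\delta, T],
\]
whose driver is Lipschitz in $Z$ with bounded coefficient $b$ and whose terminal $\theta\mu$ lies in $L^\infty$; standard linear BSDE theory then yields a unique pair $(Y^\mu, Z^\mu) \in \mathbb S^2_{\mathscr F}([T-\delta,T]) \times \mathbb L^2_{\mathscr F}([T-\delta,T])$. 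Plugging $Z^\mu$ into the forward equation of \eqref{MF-FBSDE} reduces it to a pure SDE (the $X$-dependence has disappeared), which integrates directly to produce a unique $X^\mu \in \mathbb S^2_{\mathscr F}([T-\delta,T])$ with $X^\mu_{T-\delta}=\xi$. Define $\Phi(\mu) := \mathbb E[X^\mu_T]$; a fixed point of $\Phi$ is the mean required to close \eqref{MF-FBSDE}.

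For the contraction, take $\mu, \mu' \in \mathbb R$ and let $\delta Y, \delta Z, \delta X$ denote the corresponding differences. The forward equation gives $\delta X_T = \int_{T-\delta}^T b_s\,\delta Z_s\,ds + \int_{T-\delta}^T \delta Z_s\,dW_s$, so Cauchy--Schwarz (and the vanishing mean of the stochastic integral) yield
\[
|\Phi(\mu) - \Phi(\mu')|^2 \leq \|b\|_\infty^2\,\delta\,\mathbb E\int_{T-\delta}^T (\delta Z_s)^2\,ds.
\]
The equation for $\delta Y$ reads $\delta Y_t = \theta(\mu-\mu') - \int_t^T b_s \delta Z_s\,ds - \int_t^T \delta Z_s\,dW_s$; applying It\^o's formula to $(\delta Y)^2$, handling the cross term $2\delta Y \cdot b \cdot \delta Z$ via Young's inequality with an optimized weight, taking expectations, and closing with Gronwall on $\mathbb E[(\delta Y_t)^2]$ should yield an estimate of the form
\[
\mathbb E\int_{T-\delta}^T (\delta Z_s)^2\,ds \leq \left(\frac{4}{3}\|\theta\|^2 + \frac{16}{3}\|b\|_\infty^2\|\theta\|^2 e^{4\|b\|_\infty^2 \delta}\right)(\mu-\mu')^2,
\]
the exponential factor arising at the Gronwall step. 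Combining the two estimates, together with $\delta \leq 1$, gives $|\Phi(\mu)-\Phi(\mu')|^2 < (\mu-\mu')^2$ under hypothesis \eqref{ass-delta-in-prop}.

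Banach's fixed point theorem then produces a unique $\mu^* \in \mathbb R$, and $(X^{\mu^*}, Y^{\mu^*}, Z^{\mu^*})$ is the unique solution of \eqref{MF-FBSDE} on $[T-\delta, T]$ in the stated spaces. The main obstacle is tracking explicit constants to match precisely the threshold in \eqref{ass-delta-in-prop}: the split into a ``pure terminal'' contribution (weight $\frac{4}{3}\|\theta\|^2$) and a ``Gronwall-driven'' contribution (weight $\frac{16}{3}\|b\|^2_\infty\|\theta\|^2 e^{4\|b\|^2_\infty}$) forces one to choose the Young weight carefully and not lose sharpness in the exponential factor. An alternative route would use Girsanov under $d\tilde{\mathbb P}/d\mathbb P = \mathcal E(\int b\,dW)_T$ (well-defined since $b$ is bounded) to express $\delta Y$ as a conditional expectation under $\tilde{\mathbb P}$, but this typically produces coarser constants than the direct It\^o--Gronwall route.
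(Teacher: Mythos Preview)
Your proposal is correct and follows essentially the same route as the paper: decouple by fixing $\mu=\mathbb E[X_T]$, solve the linear BSDE then the forward SDE, and show $\mu\mapsto\mathbb E[X_T^\mu]$ is a contraction via It\^o's formula, Young's inequality (with weights $\tfrac14$ and $4$), and Gr\"onwall. The only discrepancy is that your $\delta Z$ bound omits a factor of $\delta$ in the second term (the paper obtains $\tfrac{16}{3}\delta\|b\|_\infty^2\|\theta\|^2 e^{4\|b\|_\infty^2\delta}$ because the Gr\"onwall output is integrated over $[T-\delta,T]$), but since $\delta\le 1$ this is harmless for the contraction.
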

\begin{proof}
Replacing $\mathbb E[X_T]$ in \eqref{MF-FBSDE} by $\mu\in \mathbb R$, there exists unique $(Y,Z)$ solving the backward dynamics of \eqref{MF-FBSDE} and taking $Z$ into the forward part of \eqref{MF-FBSDE} we obtain an $X$ and thus a $\mathbb E[X_T]$. Thus, we obtain a mapping from $\mu\in\mathbb R$ to $\mathbb E[X_T]\in\mathbb R$. 
	For any $\mu$ and $\mu'\in \mathbb R$, let $(X,Y, Z)$ and $(X',Y',Z')$ be the corresponding solutions to \eqref{MF-FBSDE} with $\mathbb E[X_T]$ replaced by $\mu$ and $\mu'$, respectively. Thus, $(X-X',Y-Y', Z-Z')$ satisfy for each $t\in[T-\delta,T]$
	\begin{equation*}
	\left\{\begin{split}
	X_t-X'_t=&~\int_{T-\delta}^tb_s\left(Z_s- Z'_s\right)\,ds+\int_{T-\delta}^t\left( Z_s- Z'_s\right)\,dW_s\\
	Y_t-Y'_t=&~\theta(\mu-\mu')-\int_t^Tb_s\left( Z_s-Z'_s\right)\,ds-\int_t^T\left(Z_s- Z'_s\right)\,dW_s.
	\end{split}\right.
	\end{equation*}
	Applying It\^o's formula to the backward dynamics we get
	\begin{equation*}
	\begin{split}
	&~\mathbb E[(Y_t-Y'_t)^2]+\mathbb E\left[\int_t^T(Z_s-Z'_s)^2\,ds\right]\\
	\leq&~\|\theta\|^2|\mu-\mu'|^2+2\|b\|_{\infty}\mathbb E\left[\int_t^T|Y_s-Y'_s||Z_s-Z'_s|\,ds\right]\\
	\leq&~\|\theta\|^2|\mu-\mu'|^2+\frac{1}{4}\mathbb E\left[\int_t^T(Z_s- Z'_s)^2\,ds\right]+4\|b\|^2_{\infty}\mathbb E\left[\int_t^T(Y_s-Y'_s)^2\,ds\right],
	\end{split}
	\end{equation*}
	where we recall by $\|b\|_\infty$ we mean the essential bound on $\Omega\times[0,T]$ not just on $\Omega\times[T-\delta,T]$.
	Gr\"onwall's inequality implies 
	\[
	\mathbb E[(Y_t-Y'_t)^2]\leq \|\theta\|^2e^{4\|b\|^2_{\infty}\delta}|\mu-\mu'|^2
	\]
	and hence
	\[
	\mathbb E\left[\int_{T-\delta}^T(Z_s- Z'_s)^2\,ds\right]\leq \left(\frac{4}{3}\|\theta\|^2+\frac{16}{3}\delta\|b\|^2_{\infty}\|\theta\|^2e^{4\|b\|^2_{\infty}\delta}\right)|\mu-\mu'|^2.
	\]
	From the forward dynamics we have
	\begin{equation*}
	\begin{split}
	|\mathbb E[X_T-X'_T]|^2\leq&~ \|b\|^2_{\infty}\delta\mathbb E\left[\int_{T-\delta}^T(Z_s- Z'_s)^2\,ds\right].
	\end{split}
	\end{equation*}
By \eqref{ass-delta-in-prop}, the contraction follows.
\end{proof}

For any initial point $(t,x,\overline x)$, define $u(t,x,\overline x)=Y^{t,x,\overline x}_t$. Uniqueness of \eqref{MF-FBSDE} on $[T-\delta,T]$ implies $Y^{t,x,\overline x}_s=u(s,X^{t,x,\overline x}_s,\mathbb E[X^{t,x,\overline x}_s])$ for $s\in[t,T]$ and $t\in[T-\delta,T]$. Observe that $(Y,Z)$ is well known once $\mathbb E[X]$ is fixed. Thus, $Y$ depends only on $\mathbb E[X]$ and $Y_s=u(s,\mathbb E[X_s])$ for $s\in[T-\delta,T]$ with $Y_T=u(T,\mathbb E[X_T])=\theta\mathbb E[X_T]$. Moreover, note that the coefficients of \eqref{MF-FBSDE} are random, the decoupling field $u$ is random and we drop the dependence on $\omega$ for simplicity. To extend the solution from $[T-\delta,T]$ to the whole interval, following \cite{MWZZ-2015} we need to prove the decoupling field on each interval is uniformly Lipschitz; in \cite{MWZZ-2015} such a decoupling field is called a \textit{regular decoupling field}. To do so, we consider the variational FBSDE. 
	
	For any $\mathcal X$ and $\mathcal X'$ in $L^2$, let $(X,Y,Z)$ and $(X',Y',Z')$ be solutions to \eqref{MF-FBSDE} corresponding to $\mathcal X$ and $\mathcal X'$, respectively. The triple $(\mathbb E[\nabla X],\nabla Y,\nabla Z)$ satisfies the following variational FBSDE
	\begin{equation}\label{variational-FBSDE}
	\left\{\begin{split}
	\mathbb E[\nabla X_t]=&~1+\int_0^t\mathbb E[b_s\nabla Z_s]\,ds\\
	\nabla Y_t=&~\theta\mathbb E[\nabla X_T]-\int_t^Tb_s\nabla Z_s\,ds-\int_t^T\nabla Z_s\,dW_s,
	\end{split}\right.
	\end{equation}
	where 
	\[
		\mathbb E[\nabla X]=\frac{\mathbb E[X]-\mathbb E[X']}{\mathbb E[\mathcal X]-\mathbb E[\mathcal X']},\quad\nabla Y=\frac{Y-Y'}{\mathbb E[\mathcal X]-\mathbb E[\mathcal X']},\quad\nabla Z=\frac{Z-Z'}{\mathbb E[\mathcal X]-\mathbb E[\mathcal X']}.
	\] 
By \eqref{variational-FBSDE}, $\left(\frac{\nabla Y}{\mathbb E[\nabla X]}, \frac{\nabla Z}{\mathbb E[\nabla X]}\right)$ satisfies the following mean field BSDE
	\begin{equation}\label{characteristic-BSDE-exp}
\widehat Y_t=\theta-\int_t^T\left(b_s\widehat Z_s-\widehat Y_s\mathbb E\left[b_s\widehat Z_s\right]\right)\,ds-\int_t^T\widehat Z_s\,dW_s.
\end{equation}
Following the terminology in \cite{MWZZ-2015}, we call \eqref{characteristic-BSDE-exp} the \textit{characterisitc BSDE} of \eqref{MF-FBSDE}. Moreover, for any $(\widehat Y,\widehat Z)$ satisfying \eqref{characteristic-BSDE-exp}, define $\mathbb E[\nabla X]$ as the unique solution to the ODE
\[
\mathbb E[\nabla X_t]=1+\int_0^t\mathbb E[b_s\widehat Z_s]\mathbb E[\nabla X_s]\,ds
\]
and define
\[
\nabla Y=\widehat Y\mathbb E[\nabla X]\quad\textrm{and}\quad \nabla Z=\widehat Z\mathbb E[\nabla X].
\]
Then $(\mathbb E[\nabla X],\nabla Y,\nabla Z)$ must satsify \eqref{variational-FBSDE}. Thus,  the solution to \eqref{characteristic-BSDE-exp} and the solution to \eqref{variational-FBSDE} have a one-to-one correspondence through
\[
\widehat Y=\frac{\nabla Y}{\mathbb E[\nabla X]}\quad\textrm{and}\quad\widehat Z=\frac{\nabla Z}{\mathbb E[\nabla X]}.
\]
The following proposition establishes the well-posedness result of \eqref{characteristic-BSDE-exp} on the whole interval $[0,T]$.

\begin{proposition}\label{prop:wellposedness-characteristic-BSDE}
Let $\theta$ be measurable w.r.t. $\mathscr F_T$. For each $R$ satisfying
\begin{equation}\label{ass-R-characteristic-BSDE}
	R\leq \frac{1}{4\|b\|_{\mathscr F,BMO}},
\end{equation}
choosing $\|\theta\|$ small enough such that
\begin{equation}\label{ass-theta-characteristic-BSDE}
	\left\{\begin{split}
&~8\left(\|b\|_{\mathscr F,BMO}+1\right)^2(1+ 2\|b\|_{\mathscr F,BMO}e^{2\|b\|_{\mathscr F,BMO}R}R)\|\theta\|^2  \leq R^2\\
&~8(\|b\|_{\mathscr F,BMO}+1)^2\|b\|_{\mathscr F,BMO}\|\theta\|e^{\|b\|_{\mathscr F,BMO}R}\leq \frac{1}{2},
	\end{split}\right.
\end{equation}
there exists a unique $(\widehat Y,\widehat Z)\in\mathbb S^\infty_{{\mathscr F}}\times\mathbb H^2_{{\mathscr F},BMO}$ satisfying the characterisitc BSDE \eqref{characteristic-BSDE-exp}. In particular, $\|\widehat Y\|_\infty+\|\widehat Z\|_{\mathscr F,BMO}\leq R$.
\end{proposition}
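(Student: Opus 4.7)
My plan is to prove the proposition by a Banach fixed-point argument in the closed ball
\[
\mathcal{B}_R := \left\{(\widehat y, \widehat z) \in \mathbb{S}^\infty_{\mathscr F} \times \mathbb{H}^2_{\mathscr F, BMO} : \|\widehat y\|_\infty^2 + \|\widehat z\|_{\mathscr F,BMO}^2 \leq R^2\right\}.
\]
The key structural remark is that the mean-field factor $\mathbb{E}[b_s \widehat Z_s]$ is deterministic, so once it is frozen, \eqref{characteristic-BSDE-exp} becomes a \emph{linear} BSDE in $(\widehat Y,\widehat Z)$ with bounded coefficients and bounded terminal condition $\theta$, amenable to Girsanov.

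Concretely, for $(\widehat y,\widehat z)\in\mathcal B_R$, set $\gamma_s := \mathbb{E}[b_s\widehat z_s]$; by Cauchy--Schwarz and the BMO definition,
\[
\int_0^T |\gamma_s|\,ds \leq \|b\|_{\mathscr F,BMO}\,\|\widehat z\|_{\mathscr F,BMO} \leq \|b\|_{\mathscr F,BMO} R.
\]
The frozen BSDE
\[
\widehat Y_t = \theta - \int_t^T(b_s\widehat Z_s - \widehat Y_s\gamma_s)\,ds - \int_t^T \widehat Z_s\,dW_s
\]
admits a unique $\mathbb S^\infty\times\mathbb H^2_{BMO}$ solution. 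Under the Girsanov measure $\mathbb Q$ with density $\mathcal E(\int_0^\cdot b_s\,dW_s)_T$ (a true martingale because $b$ is bounded), $\tilde W := W - \int_0^\cdot b_s\,ds$ is $\mathbb Q$-Brownian and the $b\widehat Z$ term vanishes, leaving the pathwise ODE $d\widehat Y_s = -\widehat Y_s\gamma_s\,ds + \widehat Z_s\,d\tilde W_s$. Solving explicitly gives
\[
\widehat Y_t = \mathbb E^{\mathbb Q}\!\left[\theta\,\exp\!\Bigl(\int_t^T\gamma_s\,ds\Bigr)\,\Big|\,\mathcal F_t\right], \qquad \|\widehat Y\|_\infty \leq \|\theta\|\,e^{\|b\|_{\mathscr F,BMO}R}.
\]
Applying It\^o's formula to $\widehat Y^2$ and absorbing $\widehat Z^2$ via Young's inequality $2|\widehat Y b\widehat Z|\le \tfrac12\widehat Z^2+2\widehat Y^2 b^2$ then yields
\[
\tfrac12\|\widehat Z\|^2_{\mathscr F,BMO} \leq \|\theta\|^2 + 2\|\widehat Y\|^2_\infty\bigl(\|b\|^2_{\mathscr F,BMO} + \|b\|_{\mathscr F,BMO}R\bigr).
\]
Combining these bounds with the first line of \eqref{ass-theta-characteristic-BSDE} delivers $\|\widehat Y\|_\infty^2+\|\widehat Z\|^2_{\mathscr F,BMO}\leq R^2$, so the map $\Phi\colon (\widehat y,\widehat z)\mapsto(\widehat Y,\widehat Z)$ sends $\mathcal B_R$ into itself.

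For contraction, take $(\widehat y,\widehat z),(\widehat y',\widehat z')\in\mathcal B_R$ with outputs $(\widehat Y,\widehat Z),(\widehat Y',\widehat Z')$ and write $\gamma'_s:=\mathbb E[b_s\widehat z'_s]$. The differences satisfy
\[
d(\widehat Y-\widehat Y') = \bigl(b(\widehat Z-\widehat Z') - (\widehat Y-\widehat Y')\gamma - \widehat Y'(\gamma-\gamma')\bigr)\,ds + (\widehat Z-\widehat Z')\,dW,
\]
with zero terminal value. The same Girsanov representation together with $\int_0^T|\gamma_s-\gamma'_s|\,ds\leq \|b\|_{\mathscr F,BMO}\|\widehat z-\widehat z'\|_{\mathscr F,BMO}$ yields
\[
\|\widehat Y-\widehat Y'\|_\infty \leq \|b\|_{\mathscr F,BMO}\,\|\theta\|\,e^{2\|b\|_{\mathscr F,BMO}R}\,\|\widehat z-\widehat z'\|_{\mathscr F,BMO},
\]
and a parallel It\^o estimate controls $\|\widehat Z-\widehat Z'\|_{\mathscr F,BMO}$ by the same right-hand side up to constants. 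The second line of \eqref{ass-theta-characteristic-BSDE} then forces the total contraction constant below $1/2$, and Banach's theorem delivers the unique fixed point in $\mathcal B_R$.

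The main obstacle is that the coupling $\widehat Y\cdot\mathbb E[b\widehat Z]$ is quadratic and non-monotone, so standard monotonicity or energy arguments do not close globally on $[0,T]$. The resolution is to exploit that the mean-field term is deterministic and enters \emph{linearly} once frozen, reducing each iteration to a linear BSDE tractable by Girsanov; the smallness of $\|\theta\|$ imposed in \eqref{ass-theta-characteristic-BSDE} then beats the exponential factor $e^{\|b\|_{\mathscr F,BMO}R}$ to produce both invariance and strict contraction. The principal technical care is propagating BMO bounds throughout---to guarantee the Girsanov change of measure is legitimate, the frozen BSDE is solvable in $\mathbb S^\infty\times\mathbb H^2_{BMO}$, and the mean-field coefficient $\gamma$ inherits the integrability needed to apply the exponential ODE representation.
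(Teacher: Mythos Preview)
Your approach is essentially the paper's: freeze the mean-field term, use Girsanov to turn the frozen equation into a linear BSDE, read off $\|\widehat Y\|_\infty$ from the explicit representation, get a BMO bound on $\widehat Z$ by It\^o, and close a contraction on a BMO ball. Two small points. First, a sign slip: to kill the $+b\widehat Z$ drift you need the density $\mathcal E\!\bigl(-\int_0^\cdot b_s\,dW_s\bigr)$ (so that $W^{\mathbb Q}=W+\int_0^\cdot b_s\,ds$); with your stated density $\mathcal E\!\bigl(\int_0^\cdot b_s\,dW_s\bigr)$ the term doubles instead of vanishing. Second, you estimate $\|\widehat Z\|_{\mathscr F,BMO}$ directly under $\mathbb P$ via Young's inequality, whereas the paper does the It\^o estimate under $\mathbb Q$ and then invokes \cite[Lemma~A.1]{Herdegen2019} to transfer the $\mathbb Q$-BMO norm back to $\mathbb P$; the factor $8(\|b\|_{\mathscr F,BMO}+1)^2$ in \eqref{ass-theta-characteristic-BSDE} is exactly that transfer constant, so the stated conditions are calibrated to the paper's route rather than yours, and your bounds would naturally produce slightly different (but comparable) smallness conditions.
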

\begin{proof}

	Define $\left.\frac{d\mathbb Q}{d\mathbb P}\right|_{\mathscr F_t}=e^{-\int_0^tb_s\,dW_s-\frac{1}{2}\int_0^tb^2_s\,ds}$, which implies $W^{\mathbb Q}:=W+\int_0b_s\,ds$ is a $\mathbb Q$ Brownian motion. Under $\mathbb Q$ we consider
	\begin{equation}\label{comparison-3-exp}
	\widehat Y_t=\theta+\int_t^T\widehat Y_s\mathbb E\left[b_s\widehat Z_s\right]\,ds-\int_t^T\widehat Z_s\,dW^{\mathbb Q}_s.
	\end{equation}
	We solve \eqref{comparison-3-exp} by a contraction argument. For any $R>0$ and $z\in\mathbb H^2_{\mathscr F,BMO,R}$ we consider
	\begin{equation}\label{comparison-4-exp}
	\widehat Y_t=\theta+\int_t^T\widehat Y_s\mathbb E\left[b_sz_s\right]\,ds-\int_t^T\widehat Z_s\,dW^{\mathbb Q}_s.
	\end{equation}
	The formula for linear BSDE yields
	\begin{equation}\label{exp-Yhat-1}
	0\leq \widehat Y_t=\mathbb E^{\mathbb Q}\left[\left.\theta e^{\int_t^T\mathbb E[b_sz_s]\,ds}\right|{\mathscr F}_t\right]\leq \|\theta\|e^{\|b\|_{\mathscr F,BMO}R}.
	\end{equation}
	Applying It\^o's formula to \eqref{comparison-4-exp} we have for each $\tau\in\mathcal T_{{\mathscr F}}$
	\begin{equation}
	\begin{split}
	\widehat Y^2_\tau+\mathbb E^{\mathbb Q}\left[\left.\int_\tau^T\widehat Z^2_s\,ds\right|{\mathscr F}_\tau\right]\leq&~ \|\theta\|^2+2\mathbb E^{\mathbb Q}\left[\left.\int_\tau^T\widehat Y^2_s\mathbb E[|b_sz_s|]\,ds\right|{\mathscr F}_\tau\right]\\
	\leq&~\|\theta\|^2+2\|\widehat Y\|^2_{\infty}\mathbb E\left[\int_0^T|b_sz_s|\,ds\right]\\
	\leq&~\|\theta\|^2+2\|\widehat Y\|^2_\infty\|b\|_{\mathscr F,BMO}\|z\|_{\mathscr F,BMO}\\
	\leq&~(1+ 2\|b\|_{\mathscr F,BMO}e^{2\|b\|_{\mathscr F,BMO}R}R)\|\theta\|^2.
	\end{split}
	\end{equation}
	Thus, by the first inequality in \eqref{ass-theta-characteristic-BSDE} and \cite[Lemma A.1]{Herdegen2019} we get $\|\widehat Z\|_{\mathscr F,BMO}\leq R$. So the mapping $z\in\mathbb H^2_{BMO,R}\rightarrow \widehat Z\in\mathbb H^2_{BMO,R}$ is well-defined. Next we show this mapping is a contraction. Fix $z,z'\in\mathbb H^2_{\mathscr F,BMO,R}$ and let $(\widehat Y,\widehat Z)$ and $(\widehat Y',\widehat Z')$ be the corresponding solutions. It\^o's formula implies
	\begin{equation*}
	\begin{split}
	&~(\widehat Y_\tau-\widehat Y'_\tau)^2+\mathbb E^{\mathbb Q}\left[\left.\int_\tau^T(\widehat Z_s-\widehat Z'_s)^2\,ds\right|{\mathscr F}_\tau\right]\\
	\leq&~ 2\|\widehat Y-\widehat Y'\|^2_{\infty}\mathbb E\left[\int_0^T|b_s||z_s|\,ds\right]+2\|\widehat Y'\|_{\infty}\|\widehat Y-\widehat Y'\|_{\infty}\mathbb E\left[\int^T|b_s||z_s-z'_s|\,ds\right]\\
	\leq&~2\|b\|_{\mathscr F,BMO}\|z\|_{\mathscr F,BMO}\|\widehat Y-\widehat Y'\|^2_{\infty}+2\|b\|_{\mathscr F,BMO} \|\widehat Y'\|_{\infty} \|\widehat Y-\widehat Y'\|_{\infty}\|z-z'\|_{\mathscr F,BMO}\\
	\leq&~2\|b\|_{\mathscr F,BMO}R\|\widehat Y-\widehat Y'\|^2_{\infty}+2\|b\|_{\mathscr F,BMO}\|\theta\|e^{\|b\|_{\mathscr F,BMO}R}\|\widehat Y-\widehat Y'\|_{\infty}\|z-z'\|_{\mathscr F,BMO}\\
	\leq&~(2\|b\|_{\mathscr F,BMO}R+\|b\|_{\mathscr F,BMO}\|\theta\|e^{\|b\|_{\mathscr F,BMO}R})\|\widehat Y-\widehat Y'\|^2_{\infty}+\|b\|_{\mathscr F,BMO}\|\theta\|e^{\|b\|_{\mathscr F,BMO}R}\|z-z'\|^2_{\mathscr F,BMO}.
	\end{split}
	\end{equation*}
	Thus, the contraction property follows from \cite[Lemma A.1]{Herdegen2019}, \eqref{ass-R-characteristic-BSDE} and the second inequality of \eqref{ass-theta-characteristic-BSDE}. 
\end{proof}	
With Proposition \ref{prop:wellposedness-characteristic-BSDE}, we can extend the short time solution in Proposition \ref{prop:short-time} to the whole interval.
\begin{theorem}\label{thm:MFG-IA}
Let Assumption \ref{ass:MFG} holds with the measurability of $\theta$ w.r.t. to $\mathscr G$ replaced by w.r.t. $\mathscr F_T$. For a fixed $R$ satisfying \eqref{ass-R-characteristic-BSDE}, let $\theta$ satisfy \eqref{ass-theta-characteristic-BSDE}. 
Then there exists a unique $(Y,Z)\in\mathbb S^2_{{\mathscr F}}\times\mathbb L^2_{{\mathscr F}}$ satisfying \eqref{MF-FBSDE}. As a result, there exists a unique equilibrium of the MFG \eqref{exp-MFG-IA}.
\end{theorem}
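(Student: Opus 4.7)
The plan is to combine the short-time well-posedness of Proposition \ref{prop:short-time} with the uniform bound $\|\widehat Y\|_\infty \leq R$ from Proposition \ref{prop:wellposedness-characteristic-BSDE}, following the ``regular decoupling field'' methodology of \cite{MWZZ-2015}. The core idea is that $\widehat Y_t$ captures the Lipschitz constant (with respect to the mean field argument) of the decoupling field, so the global bound on $\widehat Y$ lets us iterate the short-time existence with a step size $\delta$ that depends only on $R$ and $\|b\|_\infty$, not on the current interface data.

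First, I would fix $\delta > 0$ small enough that the right-hand side of \eqref{ass-delta-in-prop} holds when $\|\theta\|$ there is replaced by $R$; this $\delta$ is the universal step length. On $[T-\delta,T]$, Proposition \ref{prop:short-time} gives a unique $(X,Y,Z)$ for every initial wealth $\mathcal X \in L^2$, and by uniqueness $Y_t$ depends on $\mathcal X$ only through $\mathbb E[X_t]$, so I define the decoupling field $u(t,\mu) := Y_t$ where $(X,Y,Z)$ is the solution on $[t,T]$ with $\mathbb E[X_t] = \mu$. The main step is then to show that $u(t,\cdot)$ is Lipschitz with constant at most $R$ uniformly in $t$. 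For this I take two solutions with initial means $\mu,\mu'$, form the variational FBSDE \eqref{variational-FBSDE}, and pass to $\widehat Y = \nabla Y / \mathbb E[\nabla X]$, $\widehat Z = \nabla Z / \mathbb E[\nabla X]$ to obtain precisely the characteristic BSDE \eqref{characteristic-BSDE-exp}. Proposition \ref{prop:wellposedness-characteristic-BSDE} then yields $|\widehat Y_t| \leq R$, which translates into $|u(t,\mu) - u(t,\mu')| \leq R|\mu-\mu'|$.

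Next I carry out the backward iteration. On $[T-2\delta, T-\delta]$ I solve \eqref{MF-FBSDE} with terminal $Y_{T-\delta} = u(T-\delta, \mathbb E[X_{T-\delta}])$ in place of $\theta \mathbb E[X_T]$. The contraction in Proposition \ref{prop:short-time} transfers verbatim with $\|\theta\|$ replaced by the Lipschitz constant of $u(T-\delta,\cdot)$, which is at most $R$; this is exactly why $\delta$ was chosen against $R$ rather than $\|\theta\|$. Because the characteristic BSDE is solved globally on $[0,T]$ in Proposition \ref{prop:wellposedness-characteristic-BSDE}, the Lipschitz bound $R$ for the (extended) decoupling field is preserved at every interface. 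Iterating $\lceil T/\delta \rceil$ times produces $(X,Y,Z) \in \mathbb S^2_{\mathscr F} \times \mathbb S^2_{\mathscr F} \times \mathbb L^2_{\mathscr F}$ satisfying \eqref{MF-FBSDE} on $[0,T]$; uniqueness on each subinterval (hence globally) follows from the same contraction.

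The main obstacle I foresee is the book-keeping needed to verify that the short-time contraction actually accommodates terminals of the form $\eta(\mathbb E[X_{T-\delta}])$ for a random Lipschitz $\eta$ with Lipschitz constant at most $R$, rather than only the linear $\theta\mu$, and to confirm that the global bound on $\widehat Y$ supplied by Proposition \ref{prop:wellposedness-characteristic-BSDE} really governs the Lipschitz constant of $u(t,\cdot)$ at every interface, not merely at $T$—this amounts to checking that the variational identification $\widehat Y = \nabla Y/\mathbb E[\nabla X]$ is uniform across iterations. Once these two technical points are handled, the equilibrium property of $\pi^* = Z + b/\alpha$ in the MFG \eqref{exp-MFG-IA} follows from the standard martingale-optimality verification from \cite[Section 2]{HIM-2005}, the fixed-point condition $\mu = \mathbb E[X_T^*]$ being automatically satisfied by construction of the solution to \eqref{MF-FBSDE}.
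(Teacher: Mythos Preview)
Your proposal is correct and follows essentially the same route as the paper: short-time contraction from Proposition \ref{prop:short-time}, uniform Lipschitz bound $R$ on the decoupling field via the characteristic BSDE of Proposition \ref{prop:wellposedness-characteristic-BSDE}, and backward iteration with step size $\delta$ chosen against $R$ rather than $\|\theta\|$. The only point you leave implicit that the paper spells out is that, once $\mathbb E[X_T]$ is determined, the backward equation has bounded terminal and Lipschitz (hence subquadratic) driver, so $Z\in\mathbb H^2_{\mathscr F,BMO}$ by standard results, which is what makes the martingale-optimality verification go through.
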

\begin{proof}
	In Proposition \ref{prop:short-time} we have shown there exists a unique solution to \eqref{MF-FBSDE}
 on $[T-\delta,T]$ with $\delta$ satisfying \eqref{ass-delta-in-prop}. Next, 	
we will prove the well-posedness result of the FBSDE \eqref{MF-FBSDE} on $[T-2\delta,T-\delta]$ by choosing $\delta$ further smaller
	\begin{equation}\label{FBSDE_T-2delta_T-delta}
	\left\{\begin{split}
	X_t=&~X_{T-2\delta}+\int_{T-2\delta}^t\left(b_sZ_s+\frac{|b_s|^2}{\alpha}\right)\,ds+\int_{T-2\delta}^t\left(Z_s+\frac{b_s}{\alpha}\right)\,dW_s\\
	Y_t=&~u(T-\delta,\mathbb E[X_{T-\delta}])-\int_t^{T-\delta}\left(b_sZ_s+\frac{|b_s|^2}{2\alpha}\right)\,ds-\int_t^{T-\delta}Z_s\,dW_s,
	\end{split}\right.
	\end{equation}
	where $u$ is the decoupling field constructed from the solution of \eqref{MF-FBSDE} on $[T-\delta,T]$.
By Proposition \ref{prop:wellposedness-characteristic-BSDE} it holds that $$|u(T-\delta,\mathbb E[X_{T-\delta}])-u(T-\delta,\mathbb E[X'_{T-\delta}])|\leq R|\mathbb E[X_{T-\delta}]-\mathbb E[X'_{T-\delta}]|.$$ Replacing $\mathbb E[X_{T-\delta}]$ by $\mu$ the FBSDE \eqref{FBSDE_T-2delta_T-delta} is decoupled and we can get a mapping from $\mu\in\mathbb R$ to $\mathbb E[X_{T-\delta}]\in\mathbb R$, where $X$ is the solution to \eqref{FBSDE_T-2delta_T-delta} with terminal condition replaced by $u(T-\delta,\mu)$. Next we show this is a contraction. It\^o's formula implies
	\begin{equation*}
	\begin{split}
	&~\mathbb E[(Y_t-Y'_t)^2]+\mathbb E\left[\int_t^{T-\delta}( Z_s-Z'_s)^2\,ds\right]\\
	\leq&~R^2|\mu-\mu'|^2+2\|b\|_\infty\mathbb E\left[\int_t^{T-\delta}|Y_s-Y'_s|| Z_s- Z'_s|\,ds\right]\\
	\leq&~R^2|\mu-\mu'|^2+\frac{1}{4}\mathbb E\left[\int_t^{T-\delta}( Z_s- Z'_s)^2\,ds\right]+4\|b\|^2_\infty\mathbb E\left[\int_t^{T-\delta}(Y_s-Y'_s)^2\,ds\right],
	\end{split}
	\end{equation*}
	together with Gr\"onwall's inequality we get
	\[
	\mathbb E\left[\int_{T-2\delta}^{T-\delta}( Z_s- Z'_s)^2\,ds\right]\leq \left(\frac{4}{3}R^2+\frac{16}{3}\delta\|b\|^2_\infty R^2e^{4\|b\|^2_\infty\delta}\right)|\mu-\mu'|^2.
	\]
	The forward dynamics implies
	\begin{equation*}
	\begin{split}
	|\mathbb E[X_{T-\delta}-X'_{T-\delta}]|^2\leq&~ \|b\|^2_\infty\delta\mathbb E\left[\int_{T-2\delta}^{T-\delta}(Z_s- Z'_s)^2\,ds\right].
	\end{split}
	\end{equation*}
	Thus, in addition to \eqref{ass-delta-in-prop} we let $\delta$ be further smaller such that the following inequality holds
	\begin{equation}\label{ass-theta-MFG-IA-2}
	\|b\|^2_\infty\delta \left(\frac{4}{3}R^2+\frac{16}{3}\|b\|^2_\infty R^2e^{4\|b\|^2_\infty}\right)<\frac{1}{2},
	\end{equation}
	the map $\mu\rightarrow \mathbb E[X_{T-\delta}]$ is a contraction.
	
Noting the uniformly Lipschitz property of the decoupling field by Proposition \ref{prop:wellposedness-characteristic-BSDE} and repeating the above analysis by choosing $\delta$ satisfying
	\begin{equation}\label{theta-in-mainthm-MFG}
	\delta<\frac{1}{2\|b\|^2_\infty R^2\left(\frac{4}{3}+\frac{16}{3}\|b\|^2_\infty e^{4\|b\|^2_\infty}\right)}\wedge \frac{1}{2\|b\|^2_\infty \|\theta\|^2\left(\frac{4}{3}+\frac{16}{3}\|b\|^2_\infty e^{4\|b\|^2_\infty}\right)}\wedge 1\wedge T,
	\end{equation}
which implies both \eqref{ass-delta-in-prop} and \eqref{ass-theta-MFG-IA-2}, the FBSDE on any small interval $[T-(j+1)\delta,T-j\delta]$ is solvable.

To show the well-posedness of \eqref{MF-FBSDE}, it is sufficient to choose compatible parameters. 
For each fixed $R$ satisfying \eqref{ass-R-characteristic-BSDE} we choose $\theta$ satisfying  \eqref{ass-theta-characteristic-BSDE},
	which is equivalent to 
	\[
	\frac{1}{\|\theta\|} \geq 16(\|b\|_{\mathscr F,BMO}+1)^2\|b\|_{\mathscr F,BMO}e^{\|b\|_{\mathscr F,BMO}R}\vee \frac{2\sqrt 2(\|b\|_{\mathscr F,BMO}+1)(1+2\|b\|_{\mathscr F,BMO}Re^{2\|b\|_{\mathscr F,BMO}R})^\frac{1}{2}}{R}.
	\]
	So it is sufficient to choose $\delta$ small enough such that
	\begin{equation*}
	\begin{split}
	\delta<&~\frac{1}{2\|b\|^2_\infty R^2\left(\frac{4}{3}+\frac{16}{3}\|b\|^2_\infty e^{4\|b\|^2_\infty}\right)} \wedge 1\wedge T\\
	&~\wedge\left(\frac{32(\|b\|_{\mathscr F,BMO}+1)^4\|b\|^2_{\mathscr F,BMO}e^{2\|b\|_{\mathscr F,BMO}R}}{\|b\|^2_\infty\left(\frac{1}{3}+\frac{4}{3}\|b\|^2_\infty e^{4\|b\|^2_\infty}\right)}\vee\frac{(\|b\|_{\mathscr F,BMO}+1)^2(1+2\|b\|_{\mathscr F,BMO}Re^{2\|b\|_{\mathscr F,BMO}R})}{\|b\|^2_\infty R^2\left(\frac{1}{3}+\frac{4}{3}\|b\|^2_\infty e^{4\|b\|^2_\infty}\right)}\right),
	\end{split}
	\end{equation*}
	which implies \eqref{theta-in-mainthm-MFG}.
	
	It remains to prove the solution of \eqref{MF-FBSDE} characterizes the unique equilibrium of \eqref{exp-MFG-IA}. To do so, it is sufficient to prove $Z\in\mathbb H^2_{\mathscr F,BMO}$. This result is standard since the terminal condition $\theta\mathbb E[X_T]$ is well-known and bounded and the driver is subquadratic; see e.g. \cite{Kobylanski-2000}.
\end{proof}

\section{Explicitly Solvable Examples Beyond Constant Settings}\label{sec:example}
In this section, we consider a special case under our investment framework and derive the equilibria for both the $N$-player game and the MFG in closed forms. Moreover, based on the closed form solutions, the financial interpretation is provided.

\subsection{An N-player game}\label{sec:example-N-game}
For each $i$, assume $(\alpha^i,\theta^i,x^i)$ are $\mathscr G$-random variables, which are independent of all Brownian motions and $\alpha^i>0$, $\Pi_{i=1}^n\theta^i<1$. Let the return processes for the two stocks managed by player $i$ be $b^i(t,\alpha^i,\theta^i,x^i)$ and $b^{i0}(t,\alpha^i,\theta^i,x^i)$, respectively, for two bounded and measurable functions $b^i$ and $b^{i0}$. Recall that the volatilities are assumed to be $1$ for simplicity.

Define the $Z$-component as follows: for each $i=1,\cdots,N$
\begin{equation}\label{solution-Zi-1}
	Z^{ii}=0,\quad Z^{ij}=\frac{\theta^i}{N-1}\frac{b^j}{\alpha^j},\quad j\neq i
\end{equation}
and
\begin{equation}\label{solution-Zi-2}
	Z^{i0}=\frac{\theta^i}{N-1+\theta^i}\frac{\sum_{j=1}^N\frac{\theta^j}{N-1+\theta^j}\sum_{j=1}^N\frac{b^{j0}}{\alpha^j}-\sum_{j=1}^N\frac{\theta^ib^{i0}}{(N-1+\theta^j)\alpha^j}}{1-\sum_{j=1}^N\frac{\theta^j}{N-1+\theta^j}}+\frac{\theta^i}{N-1+\theta^i}\sum_{j\neq i}\frac{b^{j0}}{\alpha^j}.
\end{equation}
Note that the denominator $1-\sum_{j=1}^N\frac{\theta^j}{N-1+\theta^j}>0$ if and only if $\Pi_{j=1}^N\theta^j<1$. 
The resulting strategy for player $i$ is 
\begin{equation}\label{example:strategy-N}
	\overline\pi=(\pi^i,\pi^{i0})=\left(\frac{b^i}{\alpha^i},Z^{i0}+\frac{b^{i0}}{\alpha^i} \right).
\end{equation}
Now we verify that \eqref{solution-Zi-1} and \eqref{solution-Zi-2} induce a solution of \eqref{CN:exp-FBSDE-N}. Taking \eqref{solution-Zi-1} and \eqref{solution-Zi-2} into the forward equation and then the backward equation, we obtain
\begin{equation*}
	\begin{split}
	Y^i_t=&~\theta^i\overline x^{-i}+\frac{\theta^i}{N-1}\sum_{j\neq i}\int_0^T\left(Z^{j0}_sb^{j0}_s+\frac{|\overline b^j_s|^2}{\alpha^j}  \right)\,ds-\int_t^T\left(Z^{i0}_sb^{i0}_s+\frac{|\overline b^i_s|^2}{2\alpha^i}-\frac{\alpha^i}{2}\sum_{j\neq i}(Z^{ij}_s)^2\right)\,ds\\
	&~+\frac{\theta^i}{N-1}\sum_{j\neq i}\int_0^t\frac{b^j_s}{\alpha^j_s}\,dW^j_s+\frac{\theta^i}{N-1}\sum_{j\neq i}\int_0^t\left(Z^{j0}_s+\frac{b^{j0}_s}{\alpha^j}\right)\,dW^0_s\\
	&~+\sum_{j\neq i}\int_t^T\left(\frac{\theta^i}{N-1}\frac{b^j_s}{\alpha^j}-Z^{ij}_s \right)\,dW^j_s+\int_t^T\left( \frac{\theta^i}{N-1}\sum_{j\neq i}\left( Z^{j0}_s+\frac{b^{j0}_s}{\alpha^j}\right)-Z^{i0}_s \right)\,dW^0_s.
	\end{split}
\end{equation*}
With $Z$ defined as in \eqref{solution-Zi-1} and \eqref{solution-Zi-2}, the integrands of the stochastic integral w.r.t. $dW^j$ and $dW^0$ from $t$ to $T$ vanish, that is,
\begin{equation}\label{linear-system-Zi0}
	\frac{\theta^i}{N-1}\frac{b^j}{\alpha^j}-Z^{ij}=0,\qquad   \frac{\theta^i}{N-1}\sum_{j\neq i}\left( Z^{j0}+\frac{b^{j0}}{\alpha^j}\right)-Z^{i0}=0.
\end{equation}
 It results in that $Y^i_t$ is measurable w.r.t. $\mathscr F^{(N)}_t$.

\subsection{MFG: Random Return Processes}\label{sec:MFG-I}
Let $(\alpha,\theta,\mathcal X)$ be $\mathscr G$-random variables with $\mathscr G$ independent of all Browian motions, and the return processes $(b,b^0)$ are assumed to be $$( b_t,b^0_t) = (b(t,\alpha, \theta, \mathcal X), b^0(t,\alpha, \theta, \mathcal X)), $$ for some bounded and measurable functions $(b,b^0)$. We recall the assumption on the volatilities, $ \sigma, \sigma^0 = 1 $. Define the $Z$-component as follows
\begin{equation}\label{solution:Z-random}
(Z, Z^0) = \overline{Z} = \left(0, \frac{\theta}{1 -\mathbb E[ \theta]}\mathbb E\left[\frac{b^0}{\alpha}\right] \right)
\end{equation}
For the above $ \overline{Z} $, we have the investment strategy for the representative agent
\begin{equation}\label{example:strategy-MFG}
\overline\pi = (\pi,\pi^0)=\left(\frac{b}{\alpha}, \frac{\theta}{1 - \mathbb E[\theta]} \mathbb E\left[\frac{b^0}{\alpha}\right] + \frac{b^0}{\alpha} \right).
\end{equation}
We verify directly that \eqref{solution:Z-random} satisfies \eqref{FBSDE-exponential-common-noise}. Indeed, taking \eqref{solution:Z-random} into the forward dynamics of \eqref{FBSDE-exponential-common-noise}, one has
\[
	\theta\mathbb E[X_T|\mathcal F^0_T]=\theta\mathbb E[\mathcal X]+\theta\int_0^T\left(\mathbb E\left[\frac{b^2_s}{\alpha}\right]+\frac{\mathbb E[\theta b^0_s]}{1-\mathbb E[\theta]}\mathbb E\left[\frac{b^0_s}{\alpha}\right]+\mathbb E\left[\frac{(b^0_s)^2}{\alpha}\right]\right)\,ds+\int_0^T\frac{\theta}{1-\mathbb E[\theta]}\mathbb E\left[\frac{b^0_s}{\alpha}\right]\,dW^0_s.
\]
Taking this equality into the backward equation of \eqref{FBSDE-exponential-common-noise}, we have
\begin{equation*}
	\begin{split}
		Y_t=&~\theta\mathbb E[\mathcal X]+\theta\int_0^T\left(\mathbb E\left[\frac{b^2_s}{\alpha}\right]+\frac{\mathbb E[\theta b^0_s]}{1-\mathbb E[\theta]}\mathbb E\left[\frac{b^0_s}{\alpha}\right]+\mathbb E\left[\frac{(b^0_s)^2}{\alpha}\right]\right)\,ds+\int_0^T\frac{\theta}{1-\mathbb E[\theta]}\mathbb E\left[\frac{b^0_s}{\alpha}\right]\,dW^0_s\\
		&~-\int_t^T\left( \frac{\theta}{1-\mathbb E[\theta]}\mathbb E\left[\frac{b^0_s}{\alpha}\right]b^0_s+\frac{|\overline b_s|^2}{2\alpha} \right)\,ds-\int_t^T\frac{\theta}{1-\mathbb E[\theta]}\mathbb E\left[ \frac{b^0_s}{\alpha}\right]\,dW^0_s\\
		=&~\theta\mathbb E[\mathcal X]+\theta\int_0^T\left(\mathbb E\left[\frac{b^2_s}{\alpha}\right]+\frac{\mathbb E[\theta b^0_s]}{1-\mathbb E[\theta]}\mathbb E\left[\frac{b^0_s}{\alpha}\right]+\mathbb E\left[\frac{(b^0_s)^2}{\alpha}\right]\right)\,ds+\int_0^t\frac{\theta}{1-\mathbb E[\theta]}\mathbb E\left[\frac{b^0_s}{\alpha}\right]\,dW^0_s\\
		&~-\int_t^T\left( \frac{\theta}{1-\mathbb E[\theta]}\mathbb E\left[\frac{b^0_s}{\alpha}\right]b^0_s+\frac{|\overline b_s|^2}{2\alpha} \right)\,ds.
	\end{split}
\end{equation*}
Note that the coefficients are $\mathscr G$-measurable. Thus, we obtain an $\mathscr F$-adapted solution. Moreover, the strategy \eqref{example:strategy-MFG} of MFG is a limit of the strategy \eqref{example:strategy-N} of the $N$-player game as $N\rightarrow\infty$.

%

\subsection{Discussion of the solutions}
This section provides some financial interpretation of the equilibria obtained in Section \ref{sec:example-N-game} and Section \ref{sec:MFG-I}. The analytical solutions we obtained for MFG and the $N$-player game share similar structures and shed lights on each other.

{First}, the strategy $\pi^i$ in the $N$-player game (or $\pi$ in MFG) for the individual stock is of Merton type, since the individual stock dynamic in our setting is only driven by the idiosyncratic noise. For the $N$-player game, the hedging demand of player $ i $ for the her individual stock is $ 0 $, apart from the Merton proportion since the return process $b^{i}$ is independent of the noise. This explains why $Z^{ii}=0$ in the $N$-player game. {Second}, the terminal condition of the $N$-player FBSDE \eqref{CN:exp-FBSDE-N} results in the coupled structure of the solution component, which requires $ Z^{i0} $ satisfying a linear system \eqref{linear-system-Zi0}. From \eqref{linear-system-Zi0}, the investment $\pi^{i0}$ can be rewritten as
\[
\pi^{i0}=\frac{\theta^i}{N-1}\sum_{j\neq i}\pi^{j0}+\frac{b^{i0}}{\alpha^i}.
\]

The structure of $\pi^{i0}$ is clear. It consists of a weighted average of competitors' investment strategies and a Merton type portfolio. That is, player $i$'s investment into the stock driven by the common noise $W^0$ is a deviation from the one without competition. 
When $\theta^i$ is large, or player $i$ is more competitive, she is more willing to mimic her competitors' strategies and is encouraged to deviate more from her own Merton portfolio. Otherwise, she will be less aggressive and keep her investment around the Merton one.  
In our MFG, it no longer requires $Z^0$ solving a linear system but a linear equation instead. This linear equation suggests 
\[
	\pi^0=\theta\mathbb E[\pi^0]+\frac{b^0}{\alpha},
\]
where $\mathbb E[\pi^0]$ is considered as the aggregation of the competitors' strategies in the MFG. Thus, $\pi^0$ has a similar interpretation as $\pi^{i0}$ above. Moreover, the expression \eqref{example:strategy-MFG} suggests that
the investment strategy $ \pi^0 $ is a weighted sum of the representative player's own Merton proportion $\frac{b^0}{\alpha}$ and the aggregation of competitors' Merton proportion $\mathbb E\left[\frac{b^0}{\alpha}\right]$. The weight is characterized by the constant $ \frac{\theta}{1 - \mathbb E[\theta]} $ appearing in \eqref{example:strategy-MFG}, which captures that the strategy is not only proportional to the competition itself but also to the interaction effect. When $\theta$ is small, the representative player's own Merton proportion dominates. When $\mathbb E[\theta]$ is large, the aggregation effect dominates. Third, the term $Z^{ij}$ in Section \ref{sec:example-N-game} appears because player $i$'s value function is influenced by the risk $W^j$ of player $j$; it is proportional to player $j$'s investment into the stock driven by $W^j$. In the MFG, competitors' risk does not influence the representative player any more so that $Z^{ij}$ vanishes as $N$ goes to infinity.


Our examples can be compared with \cite{ET-2015} and \cite{LZ-2019}. On one hand, our investment strategies in Section \ref{sec:example-N-game} and Section \ref{sec:MFG-I} are distinguished from \cite{LZ-2019} by the fact that our examples go beyond constant settings while in \cite{LZ-2019}, the unique constant equilibrium is found in a Black-Scholes market where the return processes as well as the the volatilities for the idiosyncratic and common noise are constants. In the present setting, the returns for all stocks are time-dependent. Note that the volatilities are assumed to be $1$ merely for simplicity; they can be time-dependent as well. Moreover, our examples are  distinguished from \cite{ET-2015} by the fact that we allow random return rates as well as each player trades two stocks, not merely a common one as in \cite{ET-2015}. These two differences especially the random return rates make the properties of the investment strategies in \cite{ET-2015} not always true in our case. In particular, the overinvestment property in \cite[Proposition 5.5]{ET-2015} is not necessarily true in our examples. 
On the other hand, our calculation results also suggest some similarities to \cite{ET-2015} and \cite{LZ-2019}. First, if the return rate $b^0$ in our MFG is assumed to be positive as that in \cite{LZ-2019}, then our $\pi^0$ share similar properties with \cite{LZ-2019}; in particular, the representative investor invests more in the stock driven by the common noise if she is more competitive or if her competitors are more competitive. 
Second, in \cite[Proposition 5.5]{ET-2015}, it is shown that the overinvestment in risky assets occurs in more competitive markets, that is, in their setting the absolute value of the investment $|\pi^{i0}|$ is non-decreasing w.r.t. $\theta^j$ for all $j=1,\cdots,N$. This result is obviously true in our MFG when the return rate $b^0$ does not depend on $(\alpha,\theta,\mathcal X)$. Generally, this is not  necessarily true in our $N$-player game and MFG, as mentioned above. Note that when $b^0$ is independent of $(\alpha,\theta,\mathcal X)$, the stock driven by the common noise becomes the common stock for all players.

%
%
%
%
%

\section{Concluding Remarks}
In this paper we study an $N$-player exponential utility game and a mean field exponential utility game in a non-Markovian setting. The well-posedness result of the $N$-player game is established  by solving a novel multi-dimensional FBSDE with quadratic growth and unbounded terminal condition. The convergence from $N$-player game to the MFG is also established. Moreover, when the stock dynamics for each player is independent, we allow a more general competition factor and the equilibrium of MFG is characterized by a novel mean field FBSDE, whose global solvability has its own interest. 

In this paper we only consider exponential utility case. The case of power utility deserves at least an equal treatment. In the power utility case, a more complicated quadratic multi-dimensional FBSDE as well as a novel mean field quadratic FBSDE is obtained. We consider this case in a separate paper. Moreover, in a forthcoming companion paper using the probabilistic analysis we answer an open question in the literature about whether a non-constant equilibrium exists or not in the constant setting like \cite{LZ-2019,LS-2020,Reis2020}. 
Finally, other interesting topics including investment-consumption games (like \cite{LS-2020}) and games with model uncertainty will be studied in our future works.

\begin{appendix}
\section{Appendix}
The following estimate is used in the proof of Lemma \ref{lem:benchmark-BSDE}. 
\begin{lemma}\label{lem:app-estimate-BMO}
Let $f,~g\in\mathbb H^2_{\mathscr G^{(N)},BMO}$.	For each $\tau\in\mathcal T_{\mathscr G^{(N)}}$, it holds that
		\begin{equation*}
	\begin{split}
	\mathbb E\left[\left.\int_\tau^T \mathbb E[f_sg_s|\mathcal F^0_s]\,ds\right|\mathscr G^{(N)}_\tau\right]
	\leq \|f\|_{\mathscr G^{(N)},BMO}\|g\|_{\mathscr G^{(N)},BMO}.
	\end{split}
	\end{equation*}
\end{lemma}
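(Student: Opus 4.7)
The plan is to reduce the inequality to a standard BMO product estimate through two rounds of Cauchy--Schwarz followed by a tower-property argument. First, I apply conditional Cauchy--Schwarz pointwise under $\mathbb E[\,\cdot\mid \mathcal F^0_s]$ to write
\[
\mathbb E[f_s g_s \mid \mathcal F^0_s] \;\le\; \mathbb E[f_s^2 \mid \mathcal F^0_s]^{1/2}\,\mathbb E[g_s^2 \mid \mathcal F^0_s]^{1/2},
\]
then integrate over $s\in[\tau,T]$ and apply Cauchy--Schwarz in $L^2([\tau,T],ds)$ pathwise, and finally take $\mathbb E[\,\cdot\mid \mathscr G^{(N)}_\tau]$ on both sides and use conditional Cauchy--Schwarz. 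This bounds the left-hand side of the lemma by the geometric mean of $\mathbb E[\int_\tau^T \mathbb E[f_s^2\mid\mathcal F^0_s]\,ds\mid\mathscr G^{(N)}_\tau]$ and the analogous expression for $g$.

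The remaining task is to show each such factor is bounded by $\|f\|_{\mathscr G^{(N)},BMO}^2$ (respectively $\|g\|_{\mathscr G^{(N)},BMO}^2$). For this I would exploit the conditional independence $\mathcal F^0_s \perp \mathscr G^{(N)}_\tau \mid \mathcal F^0_\tau$ valid for $s\ge\tau$---a consequence of the mutual independence of $\mathscr G$ and $(W^0,W^1,\dots,W^N)$ together with the fact that $\mathscr G^{(N)}_\tau$ only uses the Brownian paths up to $\tau$. Combined with the tower property (using $\mathcal F^0_\tau\subset\mathcal F^0_s$), this reduces the nested conditional expectation to $\mathbb E[\mathbb E[f_s^2\mid\mathcal F^0_s]\mid\mathscr G^{(N)}_\tau]=\mathbb E[f_s^2\mid\mathcal F^0_\tau]$ on $\{s\ge\tau\}$, so by Fubini
\[
\mathbb E\!\left[\int_\tau^T \mathbb E[f_s^2\mid\mathcal F^0_s]\,ds\,\Bigm|\,\mathscr G^{(N)}_\tau\right] \;=\; \int_\tau^T \mathbb E[f_s^2\mid\mathcal F^0_\tau]\,ds.
\]
The desired bound then follows by combining the defining a.s.\ inequality $\mathbb E[\int_\tau^T f_s^2\,ds\mid\mathscr G^{(N)}_\tau]\le\|f\|_{\mathscr G^{(N)},BMO}^2$ with the tower identity $\mathbb E[\,\cdot\mid\mathcal F^0_\tau]=\mathbb E[\mathbb E[\,\cdot\mid\mathscr G^{(N)}_\tau]\mid\mathcal F^0_\tau]$.

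The subtlest point, and where I would focus most of the technical care, is justifying the interchange between $\int_\tau^T \mathbb E[f_s^2\mid\mathcal F^0_\tau]\,ds$ and $\mathbb E[\int_\tau^T f_s^2\,ds\mid\mathcal F^0_\tau]$ in the final step, since $\mathbf 1_{\{s\ge\tau\}}$ is $\mathscr G^{(N)}_\tau$-measurable but not generally $\mathcal F^0_\tau$-measurable. A Fubini argument exploiting the aforementioned conditional independence---which allows one to ``freeze'' the $W^0$-part of the information at time $\tau$ and integrate out the remaining Brownian and $\mathscr G$ randomness---is what I expect to make the interchange rigorous.
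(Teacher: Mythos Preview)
Your Cauchy--Schwarz reduction is harmless but unnecessary: after it you still need
\[
\mathbb E\!\left[\int_\tau^T \mathbb E[f_s^2\mid\mathcal F^0_s]\,ds\ \Big|\ \mathscr G^{(N)}_\tau\right]\le \|f\|_{\mathscr G^{(N)},BMO}^2,
\]
which is the original claim with $g=f$. So the hard work is exactly the part you flag as ``subtlest'' and do not carry out.

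The genuine gap is your use of the $\sigma$-algebra $\mathcal F^0_\tau$ and the conditional-independence statement ``$\mathcal F^0_s\perp\mathscr G^{(N)}_\tau\mid\mathcal F^0_\tau$ for $s\ge\tau$''. Here $\tau$ is only a $\mathscr G^{(N)}$-stopping time, not an $\mathbb F^0$-stopping time, so $\mathcal F^0_\tau$ is not a standard object and the claimed conditional independence has no clear meaning. Even if one gave $\mathcal F^0_\tau$ an ad hoc definition, the interchange you need,
\[
\int_\tau^T \mathbb E[f_s^2\mid\mathcal F^0_\tau]\,ds \;=\; \mathbb E\!\left[\int_\tau^T f_s^2\,ds\ \Big|\ \mathcal F^0_\tau\right],
\]
fails precisely because $\mathbf 1_{\{s\ge\tau\}}$ is not $\mathcal F^0_\tau$-measurable, as you yourself note. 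Saying that ``a Fubini argument exploiting conditional independence'' should fix it is not a proof; it is a restatement of the difficulty.

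The paper resolves this by \emph{discretizing} the stopping time. First, on $\{\tau=t\}$ one has $\mathbb E[\,\cdot\mid\mathscr G^{(N)}_\tau]=\mathbb E[\,\cdot\mid\mathscr G^{(N)}_t]$, so for $\tau$ taking countably many values one decomposes on the atoms $\{\tau=t\}$ and works at the deterministic time $t$. There the independence of $W^0$ from $(\mathscr G,W^1,\dots,W^N)$ gives $\mathbb E[\,\cdot\mid\mathscr G^{(N)}_t]=\mathbb E[\,\cdot\mid\mathcal F^0_t]$ for $\mathcal F^0$-adapted integrands, after which Fubini and the tower property are legitimate and yield $\mathbb E[\int_t^T f_sg_s\,ds\mid\mathscr G^{(N)}_t]\le\|f\|_{\mathscr G^{(N)},BMO}\|g\|_{\mathscr G^{(N)},BMO}$ directly (no preliminary Cauchy--Schwarz needed). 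A general $\tau$ is then approximated from above by discrete stopping times $\tau_n\downarrow\tau$. This discretization is exactly the missing technical device in your outline; without it, your argument does not go through.
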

\begin{proof}
	\textbf{Step 1:} By \cite[Problem 2.17(i), Chapter 1]{KS-1991}, it holds that
	\[
	\mathbb E[Z|\mathscr G^{(N)}_\tau]=\mathbb E[Z|\mathscr G^{(N)}_{\tau\wedge t}]\quad\textrm{on }\{\tau\leq t \}
	\]
	as well as
	\[
	\mathbb E[Z|\mathscr G^{(N)}_t]=\mathbb E[Z|\mathscr G^{(N)}_{\tau\wedge t}]\quad\textrm{on }\{t\leq \tau \}.
	\]
	Thus, 
	\[
	\mathbb E[Z|\mathscr G^{(N)}_\tau]1_{\{\tau=t \}}=	\mathbb E[Z|\mathscr G^{(N)}_t]1_{\{\tau=t \}}.
	\]

	\textbf{Step 2: $\tau$ is valued in $\mathbb T\subset[0,T]$, where $\mathbb T$ is a countable set.}
	\begin{align*}
	&~\mathbb E\left[\left.\int_\tau^T \mathbb E[f_sg_s|\mathcal F^0_s]\,ds\right|\mathscr G^{(N)}_\tau\right]=\mathbb E\left[\left.\int_0^T \mathbb E[f_sg_s|\mathcal F^0_s]\,ds\right|\mathscr G^{(N)}_\tau\right]-\int_0^\tau \mathbb E[f_sg_s|\mathcal F^0_s]\,ds\\
	=&~\sum_{t\in\mathbb T}\left\{ \mathbb E\left[\left.\int_0^T \mathbb E[f_sg_s|\mathcal F^0_s]\,ds\right|\mathscr G^{(N)}_\tau\right]-\int_0^\tau \mathbb E[f_sg_s|\mathcal F^0_s]\,ds\ \right\}1_{\{\tau=t\}}\\
	=&~\sum_{t\in\mathbb T}\left\{ \mathbb E\left[\left.\int_0^T \mathbb E[f_sg_s|\mathcal F^0_s]\,ds\right|\mathscr G^{(N)}_t\right]-\int_0^t \mathbb E[f_sg_s|\mathcal F^0_s]\,ds\ \right\}1_{\{\tau=t\}}\quad\textrm{(by Step 1)}\\
	=&~\sum_{t\in\mathbb T}\mathbb E\left[\left.\int_t^T \mathbb E[f_sg_s|\mathcal F^0_s]\,ds\right|\mathscr G^{(N)}_t\right]1_{\{\tau=t\}}\\
	=&~\sum_{t\in\mathbb T}\mathbb E\left[\left.\int_t^T \mathbb E[f_sg_s|\mathcal F^0_s]\,ds\right|\mathcal F^0_t\right]1_{\{\tau=t\}}\quad(\textrm{since }W^0\textrm{ and }\mathscr G,~\{W^i\}_{i=1}^N\textrm{ are independent})\\
	=&~\sum_{t\in\mathbb T}\int_t^T \mathbb E[f_sg_s|\mathcal F^0_t]\,ds1_{\{\tau=t\}}\quad(\textrm{by Fubini theorem and tower property of conditional expectation})\\
	=&~\sum_{t\in\mathbb T} \mathbb E\left[\left.\int_t^Tf_sg_s\,ds\right|\mathcal F^0_t\right]1_{\{\tau=t\}}\quad(\textrm{by Fubini theorem again})\\
	=&~\sum_{t\in\mathbb T} \mathbb E\left\{\left.\mathbb E\left[\left.\int_t^Tf_sg_s\,ds\right|\mathscr G^{(N)}_t\right]\right|\mathcal F^0_t\right\}1_{\{\tau=t\}}\quad(\textrm{by tower property again})\\
	\leq&~\sum_{t\in\mathbb T} \|f\|_{\mathscr G^{(N)},BMO}\|g\|_{\mathscr G^{(N)},BMO}1_{\{\tau=t\}}\\
	=&~\|f\|_{\mathscr G^{(N)},BMO}\|g\|_{\mathscr G^{(N)},BMO}.
	\end{align*}
	
	\textbf{Step 3: $\tau\in\mathcal T_{\mathscr G^{(N)}}$.} Define $\tau_n$ as
	\[
	\tau_n=\sum_{i=1}^n\frac{iT}{n}1_{[\frac{(i-1)T}{n},\frac{iT}{n})}(\tau)+T1_{\{\tau=T\}}.
	\]
	Then $\tau_n$ is an $\mathscr G^{(N)}$ stopping time with at most countable values. Moreover, $\tau_n\geq \tau$ and $\tau_n\rightarrow\tau$. Thus, it holds that
	\begin{equation*}
	\begin{split}
	&~\mathbb E\left[\left.\int_\tau^T \mathbb E[f_sg_s|\mathcal F^0_s]\,ds\right|\mathscr G^{(N)}_\tau\right]\\
	=&~\mathbb E\left\{\left.\mathbb E\left[\left.\int_\tau^T \mathbb E[f_sg_s|\mathcal F^0_s]\,ds\right|\mathscr G^{(N)}_{\tau_n}\right]\right|\mathscr G^{(N)}_\tau\right\}\\
	=&~\mathbb E\left\{\left.\mathbb E\left[\left.\int_{\tau_n}^T \mathbb E[f_sg_s|\mathcal F^0_s]\,ds\right|\mathscr G^{(N)}_{\tau_n}\right]+\mathbb E\left[\left. \int_\tau^{\tau_n}\mathbb E[f_sg_s|\mathcal F^0_s]\,ds\right|\mathscr G^{(N)}_{\tau_n}\right] \right|\mathscr G^{(N)}_\tau\right\}\\
	\leq&~\|f\|_{\mathscr G^{(N)},BMO}\|g\|_{\mathscr G^{(N)},BMO}+\mathbb E\left[\left. \int_\tau^{\tau_n}\mathbb E[f_sg_s|\mathcal F^0_s]\,ds\right|\mathscr G^{(N)}_{\tau}\right]\quad(\textrm{by Step 2})\\
	\rightarrow&~ \|f\|_{\mathscr G^{(N)},BMO}\|g\|_{\mathscr G^{(N)},BMO}.
	\end{split}
	\end{equation*}
\end{proof}
By Lemma \ref{lem:app-estimate-BMO},
the next lemma provides an estimate of $B^{i,N}$ in \eqref{CN:exp-N-multi-BSDE4}.
\begin{lemma}\label{lem:estimate-B}
	Under the assumptions in Lemma \ref{lem:benchmark-BSDE}, it holds that
	\begin{align*}
		&~\left\|\sqrt{|B^{i,N}|}\right\|_{\mathscr G^{(N)},BMO}\\
			\leq&~\|\theta^i\|R\overline{\|b^\cdot\|}^{(-i),N}+\|\theta^i\|\|b^i\|_{\mathscr G^{(N)},BMO}R+\frac{2\|\theta^i\|\|b^{i0}\|_{\mathscr G^{(N)},BMO}R}{1-\sum_{k=1}^N\frac{\|\theta^k\|}{N-1+\|\theta^k\|}}\\
			&~+\frac{\|\theta^i\|}{1-\mathbb E[\theta^i]}\|b^{i0}\|_{\mathscr G^{(N)},BMO}R+\frac{2\|\theta^i\|\|b^{i0}\|_{\mathscr G^{(N)},BMO}}{1-\sum_{k=1}^N\frac{\|\theta^k\|}{N-1+\|\theta^k\|}}\overline{\left\|\frac{ b^{\cdot0}}{\alpha^\cdot}\right\|}^{N}\\
			&~+\frac{\|\theta^i\|}{1-\mathbb E[\theta^i]}\|b^{i0}\|_{\mathscr G^{(N)},BMO}\left\|\frac{b^{i0}}{\alpha^i}\right\|_{\mathscr G^{(N)},BMO}+\|\theta^i\|R\overline{\|b^{\cdot0}\|}^{(-i),N}+\|\theta^i\|\|b^{i0}\|_{\mathscr G^{(N)},BMO}R\\
		&~+\frac{2\|\theta^i\|}{1-\sum_{k=1}^N\frac{\|\theta^k\|}{N-1+\|\theta^k\|}}\overline{\|\theta^\cdot b^{\cdot0}\|}^{(-i),N}R+\frac{\|\theta^i\|^2}{1-\mathbb E[\theta^i]}\|b^{i0}\|_{\mathscr G^{(N)},BMO}R\\
		&~+\frac{2\|\theta^i\|}{1-\sum_{k=1}^N\frac{\|\theta^k\|}{N-1+\|\theta^k\|}}\overline{\|\theta^\cdot b^{\cdot0}\|}^{(-i),N}\overline{\left\|\frac{ b^{\cdot0}}{\alpha^\cdot}\right\|}^{N}+\frac{\|\theta^i\|^2}{1-\mathbb E[\theta^i]}\|b^{i0}\|_{\mathscr G^{(N)},BMO}\left\|\frac{b^{i0}}{\alpha^i}\right\|_{\mathscr G^{(N)},BMO}\\
		&~+\|\theta^i\|\overline{\left\|\frac{\overline b^\cdot}{\sqrt{\alpha^\cdot}}\right\|^2}^{(-i),N}+\|\theta^i\|\left\|\frac{\overline b^i}{\sqrt{\alpha^i}}\right\|^2_{\mathscr G^{(N)},BMO}+\frac{\|\theta^i\|}{N-1}\left\|\frac{b^{i0}}{\sqrt{\alpha^i}}\right\|^2_{\mathscr G^{(N)},BMO}\\
		&~+\frac{\|\theta^i\|}{(N-1)^2}\sum_{j\ne i}\|\theta^j\|\left\|\frac{b^{j0}}{\sqrt{\alpha^j}}\right\|^2_{\mathscr G^{(N)},BMO}+\frac{\|\theta^i\|}{N-1}\|b^{i0}\|_{\mathscr G^{(N)},BMO}R.
	\end{align*}
	Moreover, under the assumptions of Corollary \ref{coro:convergence}, it holds that for any $i$
	\[
		\lim_{N\rightarrow\infty}\mathbb E\left[\int_0^T |B^{i,N}_t|^2\,dt\right]=0.
	\]
\end{lemma}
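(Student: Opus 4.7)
The plan is to control $B^{i,N}$ term by term, splitting the BMO estimate into many pieces by the triangle inequality, and to obtain the $L^2$ convergence by a conditional law of large numbers argument under the i.i.d.\ assumption of Corollary \ref{coro:convergence}.

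For the BMO bound, I would first observe that $\|\sqrt{|B^{i,N}|}\|_{\mathscr{G}^{(N)},BMO}^{2} = \esssup_\tau \mathbb{E}[\int_\tau^T |B^{i,N}_s|\,ds \mid \mathscr{G}^{(N)}_\tau]$, so after applying $|a+b|\le|a|+|b|$ inside the integrand to each pair of matched summands of $B^{i,N}$ (empirical finite-$N$ average versus its mean-field counterpart) it suffices to bound, for each pair, a conditional expectation of a time integral of a product. Each product has the form $c\cdot f_s g_s$ or $c\cdot f_s\,\mathbb{E}[g_s\mid\mathcal{F}^0_s]$, with $c$ an essentially bounded factor involving $\theta^i$, $\alpha^i$, and negative powers of $(N-1)$ (kept in check by the denominator bound $1-\sum_{j=1}^N\|\theta^j\|/(N-1+\|\theta^j\|)>0$ coming from Assumption \ref{ass:N-player}). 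In the first case the BMO Cauchy--Schwarz inequality gives $\mathbb{E}[\int_\tau^T|f_s g_s|\,ds\mid\mathscr{G}^{(N)}_\tau]\le \|f\|_{\mathscr{G}^{(N)},BMO}\|g\|_{\mathscr{G}^{(N)},BMO}$; in the second case, Lemma \ref{lem:app-estimate-BMO} yields the same bound. Plugging in $\|\breve{Z}^i\|_{\mathscr{G}^{(N)},BMO},\|\breve{Z}^{i0}\|_{\mathscr{G}^{(N)},BMO}\le R$ from Lemma \ref{lem:benchmark-BSDE} and the BMO bounds on $b^i,b^{i0}$ from Assumption \ref{ass:N-player} reproduces exactly the sum of eleven terms stated in the lemma.

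For the $L^2$ convergence under the hypotheses of Corollary \ref{coro:convergence}, the key observation is that the representation $b^i_t=b(t,\alpha^i,x^i,\theta^i,W^i_{\cdot\wedge t},W^0_{\cdot\wedge t})$, $b^{i0}_t=b^0(t,\alpha^i,x^i,\theta^i,W^i_{\cdot\wedge t},W^0_{\cdot\wedge t})$ together with the i.i.d.\ law of $\{(\alpha^i,x^i,\theta^i,W^i)\}_i$ makes the sequences $\{b^i\breve{Z}^i\}$, $\{b^{i0}\breve{Z}^{i0}\}$, $\{\theta^i b^{i0}\}$, $\{\breve{Z}^{i0}\}$, and $\{|\overline{b}^i|^2/\alpha^i\}$ i.i.d.\ conditional on $\mathcal{F}^0_T\vee\mathscr{G}$ and uniformly bounded in $L^2(\Omega\times[0,T])$ (the bound on the $\breve{Z}$-factors comes from Lemma \ref{lem:benchmark-BSDE}, together with the energy inequality that upgrades a BMO bound to an $L^2$ bound). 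A direct second-moment computation (equivalently, a conditional strong law of large numbers) then gives, for each such sequence $\{\xi^j\}$,
\begin{equation*}
\frac{1}{N-1}\sum_{j\ne i}\xi^j_t \longrightarrow \mathbb{E}[\xi^i_t\mid\mathcal{F}^0_t] \qquad \text{in } L^2(dt\otimes d\mathbb{P}),
\end{equation*}
while the deterministic denominators $1-\sum_{j=1}^N\theta^j/(N-1+\theta^j)\to 1-\mathbb{E}[\theta]>0$. Combined with dominated convergence driven by the uniform $L^2$ bound, this forces each summand in $B^{i,N}$ to converge to zero in $L^2(dt\otimes d\mathbb{P})$, and hence $\mathbb{E}[\int_0^T|B^{i,N}_t|^2\,dt]\to 0$.

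The main obstacle will be bookkeeping: $B^{i,N}$ contains roughly a dozen terms, several with nested sums such as $\sum_{j\ne i}b^{j0}\cdot\sum_k \breve{Z}^{k0}/(1+\theta^k/(N-1))$. Each such cross term must be paired with the correct mean-field limit (which typically carries a factor $(1-\mathbb{E}[\theta])^{-1}$), and the difference decomposed into an empirical-average-minus-limit piece (handled by the LLN) plus a correction whose prefactor is $O((N-1)^{-1})$. No new ideas are needed beyond Lemma \ref{lem:app-estimate-BMO} and the i.i.d.\ structure of Corollary \ref{coro:convergence}; the proof is an organized, if lengthy, verification.
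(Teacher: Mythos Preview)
Your approach matches the paper's almost exactly: the BMO bound is obtained precisely by splitting $B^{i,N}$ into its summands, applying H\"older's inequality and Lemma \ref{lem:app-estimate-BMO} term by term, and inserting the bound $\|\breve Z^i\|_{\mathscr G^{(N)},BMO},\|\breve Z^{i0}\|_{\mathscr G^{(N)},BMO}\le R$ from Lemma \ref{lem:benchmark-BSDE}; the $L^2$ convergence is obtained by a law-of-large-numbers computation on each pair ``empirical average minus conditional expectation'' after decomposing the nested sums as you describe.

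There is, however, one step you treat as automatic that the paper singles out explicitly: you assert that the sequences $\{b^j\breve Z^j\}_j$, $\{\breve Z^{j0}\}_j$, etc.\ are conditionally i.i.d., but this is not immediate from the i.i.d.\ structure of $(\alpha^j,x^j,\theta^j,W^j)$ alone---one must first know that the solution of the benchmark mean-field BSDE \eqref{CN:exp-N-benchmark-BSDE} admits a \emph{common functional representation} $\breve Z^j_t=\phi(t,\alpha^j,x^j,\theta^j,W^j_{\cdot\wedge t},W^0_{\cdot\wedge t})$ and similarly for $\breve Z^{j0}$. The paper invokes a Yamada--Watanabe result for mean-field BSDEs (\cite[Lemma 3.2]{FGHP-2018}) to secure this representation; without it the conditional i.i.d.\ claim is unjustified. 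Also, your conditioning should be on $\mathcal F^0$ rather than $\mathcal F^0_T\vee\mathscr G$: once you condition on $\mathscr G$ the parameters $(\alpha^j,x^j,\theta^j)$ are frozen at distinct values and the sequences are independent but not identically distributed, which still suffices for the second-moment LLN computation but is not what you wrote.
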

\begin{proof}
	By Lemma \ref{lem:app-estimate-BMO}, H\"older's inequality and Lemma \ref{lem:benchmark-BSDE}, the estimate for $B^{i,N}$ holds.

Under the assumptions of Corollary \ref{coro:convergence} especially \eqref{ass-corollary-convergence}, Yamada-Watanabe result of mean field (F)BSDE \cite[Lemma 3.2]{FGHP-2018} implies that there exist measurable functions $\phi$ and $\phi_0$ such that the solution to \eqref{CN:exp-N-benchmark-BSDE}	admits the expression 
\[
\breve Z^i_t=\phi(t,\alpha^i,x^i,\theta^i,W^i_{\cdot\wedge t},W^0_{\cdot\wedge t})\quad\textrm{and} \quad 	\breve Z^{i0}_t=\phi_0(t,\alpha^i,x^i,\theta^i,W^i_{\cdot\wedge t},W^0_{\cdot\wedge t}).
\]
As a result, 
\begin{equation}\label{app-convergence-1}
	\lim_{N\rightarrow\infty}\mathbb E\left[\int_0^T\left|-\frac{\theta^i}{N-1}\sum_{j\neq i}b^j_t\breve Z^j_t+\theta^i\mathbb E[b^i_t\breve Z^i_t|\mathcal F^0_t]\right|^2\,dt\right]=0.
\end{equation}
Indeed,	
\begin{align*}
		&~\mathbb E\left[\int_0^T\left|-\frac{\theta^i}{N-1}\sum_{j\neq i}b^j_t\breve Z^j_t+\theta^i\mathbb E[b^i_t\breve Z^i_t|\mathcal F^0_t]\right|^2\,dt\right]\\
		\leq&~\mathbb E\left[\int_0^T\left|\frac{1}{N-1}\sum_{j\neq i}b\phi(t,\alpha^j,x^j,\theta^j,W^j_{\cdot\wedge t},W^0_{\cdot\wedge t})-\mathbb E[b\phi(t,\alpha^i,x^i,\theta^i,W^i_{\cdot\wedge t},W^0_{\cdot\wedge t})|\mathcal F^0_t]\right|^2\,dt\right]\\
		\leq&~\frac{1}{(N-1)^2}\sum_{j=1}^N\mathbb E\left[\int_0^T\left|b\phi(t,\alpha^j,x^j,\theta^j,W^j_{\cdot\wedge t},W^0_{\cdot\wedge t})-\mathbb E[b\phi(t,\alpha^i,x^i,\theta^i,W^i_{\cdot\wedge t},W^0_{\cdot\wedge t})|\mathcal F^0_t]\right|^2\,dt \right]\\
		&~+\frac{1}{(N-1)^2}\sum_{j\neq k}\mathbb E\left[\int_0^T\left|b\phi(t,\alpha^j,x^j,\theta^j,W^j_{\cdot\wedge t},W^0_{\cdot\wedge t})-\mathbb E[b\phi(t,\alpha^i,x^i,\theta^i,W^i_{\cdot\wedge t},W^0_{\cdot\wedge t})|\mathcal F^0_t]\right|\right.\\
		&\left.\textcolor{white}{\int_0^T}\times \left|b\phi(t,\alpha^k,x^k,\theta^k,W^k_{\cdot\wedge t},W^0_{\cdot\wedge t})-\mathbb E[b\phi(t,\alpha^i,x^i,\theta^i,W^i_{\cdot\wedge t},W^0_{\cdot\wedge t})|\mathcal F^0_t]\right|\,dt \right]\\
		=&~\frac{1}{(N-1)^2}\sum_{j=1}^N\mathbb E\left[\int_0^T\left|b\phi(t,\alpha^j,x^j,\theta^j,W^j_{\cdot\wedge t},W^0_{\cdot\wedge t})-\mathbb E[b\phi(t,\alpha^i,x^i,\theta^i,W^i_{\cdot\wedge t},W^0_{\cdot\wedge t})|\mathcal F^0_t]\right|^2\,dt \right]\\
		\rightarrow&~0.
\end{align*}
Now we verify the convergence of the fourth line in the definition of $B^{i,N}$. To do so, we divide it into several parts, that is, 
\begin{align*}
		&~\mathbb E\left[\int_0^T\left(-\frac{\theta^i}{N-1}\sum_{j\neq i}b^{j0}_t\frac{\frac{\theta^j}{N-1}\frac{\sum_{k=1}^N\frac{\breve Z^{k0}_t}{1+\frac{\theta^k}{N-1}}}{1-\sum_{k=1}^N\frac{\theta^k}{N-1+\theta^k}}}{1+\frac{\theta^j}{N-1}} +\frac{\theta^i}{1-\mathbb E[\theta^i]}\mathbb E[\theta^ib^{i0}_t|\mathcal F^0_t]\mathbb E[\breve Z^{i0}_t|\mathcal F^0_t]\right)^2\,dt\right]\\
		\leq&~\mathbb E\left[\int_0^T\left(\frac{1}{N-1}\sum_{j\neq i}b^{j0}_t\frac{\frac{\theta^j}{N-1}\frac{\sum_{k=1}^N\frac{\breve Z^{k0}_t}{1+\frac{\theta^k}{N-1}}}{1-\sum_{k=1}^N\frac{\theta^k}{N-1+\theta^k}}}{1+\frac{\theta^j}{N-1}}-\frac{1}{N-1}\sum_{j\neq i}\theta^jb^{j0}_t\frac{\frac{1}{N-1}\sum_{k=1}^N\frac{\breve Z^{k0}_t}{1+\frac{\theta^k}{N-1}}}{1-\sum_{k=1}^N\frac{\theta^k}{N-1+\theta^k}}\right)^2\,dt\right]\\
		&~+\mathbb E\left[\int_0^T\left(\frac{1}{N-1}\sum_{j\neq i}\theta^jb^{j0}_t\frac{\frac{1}{N-1}\sum_{k=1}^N\frac{\breve Z^{k0}_t}{1+\frac{\theta^k}{N-1}}}{1-\sum_{k=1}^N\frac{\theta^k}{N-1+\theta^k}}-\frac{1}{N-1}\sum_{j\neq i}\theta^jb^{j0}_t\frac{\frac{1}{N-1}\sum_{k=1}^N\breve Z^{k0}_t}{1-\sum_{k=1}^N\frac{\theta^k}{N-1+\theta^k}}\right)^2\,dt\right]\\
		&~+\mathbb E\left[\int_0^T\left(\frac{1}{N-1}\sum_{j\neq i}\theta^jb^{j0}_t\frac{\frac{1}{N-1}\sum_{k=1}^N\breve Z^{k0}_t}{1-\sum_{k=1}^N\frac{\theta^k}{N-1+\theta^k}}-\frac{1}{1-\mathbb E[\theta^i]}\frac{1}{N-1}\sum_{j\neq i}\theta^jb^{j0}_t\frac{1}{N-1}\sum_{k=1}^N\breve Z^{k0}_t\right)^2\,dt\right]\\
		&~+\mathbb E\left[\int_0^T\left(\frac{1}{1-\mathbb E[\theta^i]}\frac{1}{N-1}\sum_{j\neq i}\theta^jb^{j0}_t\frac{1}{N-1}\sum_{k=1}^N\breve Z^{k0}_t-\frac{1}{1-\mathbb E[\theta^i]}\mathbb E[\theta^ib^{i0}_t|\mathcal F^0_t]\mathbb E[\breve Z^{i0}_t|\mathcal F^0_t]\right)^2\,dt\right]\\
		:=&~\mathscr {I}^N_1+\mathscr{I}^N_2+\mathscr{I}^N_3+\mathscr{I}^N_4.
\end{align*}
For $\mathscr I^N_1$, it holds that
\[
	\mathscr I^N_1\leq \frac{1}{(N-1)^2}\mathbb E\left[\int_0^T\left( \frac{1}{N-1}\sum_{j\neq i}\theta^jb^{j0}_t\frac{\frac{1}{N-1}\sum_{k=1}^N\frac{\breve Z^{k0}_t}{1+\frac{\theta^k}{N-1}}}{1-\sum_{k=1}^N\frac{\theta^k}{N-1+\theta^k}}\right)^2\,dt\right]\rightarrow 0.
\]
For $\mathscr I^N_2$, it holds that
\[
	\mathscr I^N_2\leq \frac{1}{(N-1)^2}\mathbb E\left[\int_0^T\left(\frac{1}{N-1}\sum_{j\neq i}\theta^jb^{j0}_t\frac{\frac{1}{N-1}\sum_{k=1}^N\breve Z^{k0}_t}{1-\sum_{k=1}^N\frac{\theta^k}{N-1+\theta^k}}\right)^2\,dt\right].
\]
For $\mathscr I^N_4$, it holds that
\begin{align*}
		\mathscr I^N_4\leq&~ \mathbb E\left[\int_0^T\left(\frac{1}{1-\mathbb E[\theta^i]}\frac{1}{N-1}\sum_{j\neq i}\theta^jb^{j0}_t\left(\frac{1}{N-1}\sum_{k=1}^N\breve Z^{k0}_t-\mathbb E[\breve Z^{i0}_t|\mathcal F^0_t]\right)\right)^2\,dt\right]\\
		&~+ \mathbb E\left[\int_0^T\left(\frac{1}{1-\mathbb E[\theta^i]}\left(\frac{1}{N-1}\sum_{j\neq i}\theta^jb^{j0}_t-\mathbb E[\theta^ib^{i0}_t|\mathcal F^0_t]\right)\mathbb E[\breve Z^{i0}_t|\mathcal F^0_t]\right)^2\,dt\right]\\
		\rightarrow&~ 0,
\end{align*}
where the convergence of the first part is due to the uniform boundedness of $\theta^jb^{j0}$ and the same argument leading to \eqref{app-convergence-1}, and the convergence of the second part is due to the uniform boundedness of $\theta^jb^{j0}$, the dominated convergence and law of large numbers.

For $\mathscr I^N_3$, it holds that
\begin{align*}
\mathscr I^N_3\leq&~
\mathbb E\left[\int_0^T\left(\frac{\frac{1}{N-1}\sum_{k=1}^N\frac{\theta^k}{1+\frac{\theta^k}{N-1}}-\frac{1}{N-1}\sum_{k=1}^N\theta^k}{(1-\mathbb E[\theta^i])(1-\sum_{k=1}^N\frac{\theta^k}{N-1+\theta^k})}\frac{1}{N-1}\sum_{j\neq i}\theta^jb^{j0}_t\frac{1}{N-1}\sum_{k=1}^N\breve Z^{k0}_t\right)^2\,dt\right]\\
&~+	\mathbb E\left[\int_0^T\left(\frac{\frac{1}{N-1}\sum_{k=1}^N\theta^k-\mathbb E[\theta^i]}{(1-\mathbb E[\theta^i])(1-\sum_{k=1}^N\frac{\theta^k}{N-1+\theta^k})}\frac{1}{N-1}\sum_{j\neq i}\theta^jb^{j0}_t\left(\frac{1}{N-1}\sum_{k=1}^N\breve Z^{k0}_t-\mathbb E[\breve Z^{i0}_t|\mathcal F^0_t]\right)\right)^2\,dt\right]\\
&~+	\mathbb E\left[\int_0^T\left(\frac{\frac{1}{N-1}\sum_{k=1}^N\theta^k-\mathbb E[\theta^i]}{(1-\mathbb E[\theta^i])(1-\sum_{k=1}^N\frac{\theta^k}{N-1+\theta^k})}\frac{1}{N-1}\sum_{j\neq i}\theta^jb^{j0}_t\mathbb E[\breve Z^{i0}_t|\mathcal F^0_t]\right)^2\,dt\right]\\
\rightarrow&~0,
\end{align*}
where the convergence of the first part is due to the same reason as $\mathscr I^2_N\rightarrow 0$, the convergence of the second part is due to the uniform boundedness of $\theta^i$ and $b^{i0}$ as well as the same argument leading to \eqref{app-convergence-1}, and the convergence of the third part is due to the same reason as $\mathscr I^N_4\rightarrow 0$.

Similarly, other terms in $B^{i,N}$ also converges to $0$ in $\mathbb L^2$. Thus, $B^{i,N}$ converges to $0$ in $\mathbb L^2$.
\end{proof}

The following proposition is frequently used in the main text and we believe it is a standard result. However, it is difficult to locate the proof so we provide one for completeness. 
	\begin{proposition}\label{prop:appendix}
		Let $\sqrt{ |f|}\in \mathbb H^2_{BMO}$ and $\xi\in \mathbb L^\infty$. Then the following BSDE has a unique solution in $\mathbb S^\infty\times\mathbb H^2_{BMO}$
		\[
			dY_t=f_t\,dt+Z_t\,dW_t,\quad Y_T=\xi.
		\]
	\end{proposition}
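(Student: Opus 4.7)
The plan is to solve the BSDE explicitly via martingale representation and then verify the required norm estimates. Since the driver $f$ does not involve $(Y,Z)$, the equation degenerates to a conditional-expectation formula.

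\textbf{Step 1 (Construction).} Observe that $\sqrt{|f|}\in\mathbb H^2_{BMO}$ implies, by definition, $\mathbb E\!\left[\int_0^T|f_s|\,ds\right]\le\|\sqrt{|f|}\|^2_{BMO}<\infty$, so $\xi-\int_0^Tf_s\,ds\in L^1$. Define the $L^1$-martingale
\[
M_t:=\mathbb E\!\left[\left.\xi-\int_0^T f_s\,ds\right|\mathcal F_t\right],
\]
and apply the martingale representation theorem to obtain an adapted process $Z$ with $\int_0^T Z_s^2\,ds<\infty$ a.s.\ and $M_t=M_0+\int_0^tZ_s\,dW_s$. Set $Y_t:=M_t+\int_0^tf_s\,ds$. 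A direct differentiation gives $dY_t=f_t\,dt+Z_t\,dW_t$ with $Y_T=\xi$, so $(Y,Z)$ solves the BSDE.

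\textbf{Step 2 ($Y\in\mathbb S^\infty$).} Rewriting the construction,
\[
Y_t=\mathbb E[\xi\mid\mathcal F_t]-\mathbb E\!\left[\left.\int_t^T f_s\,ds\right|\mathcal F_t\right],
\]
hence $|Y_t|\le\|\xi\|_\infty+\mathbb E[\int_t^T|f_s|\,ds\mid\mathcal F_t]\le\|\xi\|_\infty+\|\sqrt{|f|}\|^2_{BMO}$ for a.e.\ $(\omega,t)$, which gives $Y\in\mathbb S^\infty$.

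\textbf{Step 3 ($Z\in\mathbb H^2_{BMO}$).} This is the only step that needs care. Let $\tau_n=\inf\{t\ge 0:\int_0^tZ_s^2\,ds\ge n\}\wedge T$, so that $\int_0^{\cdot\wedge\tau_n}Z_s\,dW_s$ is a true martingale. Apply It\^o's formula to $Y^2$ on $[\tau,\tau_n]$ for an arbitrary stopping time $\tau$:
\[
Y_\tau^2+\int_\tau^{\tau_n}Z_s^2\,ds=Y_{\tau_n}^2-2\int_\tau^{\tau_n}Y_sf_s\,ds-2\int_\tau^{\tau_n}Y_sZ_s\,dW_s.
\]
Taking $\mathbb E[\cdot\mid\mathcal F_\tau]$, the stochastic integral vanishes (as $Y$ is bounded and $Z$ is localized), yielding
\[
\mathbb E\!\left[\left.\int_\tau^{\tau_n}Z_s^2\,ds\right|\mathcal F_\tau\right]\le\|Y\|_\infty^2+2\|Y\|_\infty\,\mathbb E\!\left[\left.\int_\tau^T|f_s|\,ds\right|\mathcal F_\tau\right]\le\|Y\|_\infty^2+2\|Y\|_\infty\|\sqrt{|f|}\|^2_{BMO}.
\]
Letting $n\to\infty$ via monotone convergence gives a bound independent of $\tau$, so $\|Z\|^2_{BMO}\le\|Y\|_\infty^2+2\|Y\|_\infty\|\sqrt{|f|}\|^2_{BMO}<\infty$. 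In particular $Z\in\mathbb H^2\subset\mathbb H^2_{BMO}$.

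\textbf{Step 4 (Uniqueness).} If $(Y,Z)$ and $(Y',Z')$ are two solutions in $\mathbb S^\infty\times\mathbb H^2_{BMO}$, their difference $(\delta Y,\delta Z)$ satisfies $d(\delta Y)_t=\delta Z_t\,dW_t$ with $\delta Y_T=0$. Since $\delta Z\in\mathbb H^2$, the stochastic integral is a martingale, so $\delta Y_t=\mathbb E[\delta Y_T\mid\mathcal F_t]=0$ and consequently $\delta Z\equiv 0$ by It\^o isometry.

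The only genuinely delicate point is the localization in Step~3, needed because the martingale representation theorem gives $Z$ only in $\mathbb L^2_{\mathrm{loc}}$ a priori; everything else is a direct consequence of the fact that the driver has no $(Y,Z)$ dependence.
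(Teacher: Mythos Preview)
Your proof is correct. The only blemish is the closing clause of Step~3, ``$Z\in\mathbb H^2\subset\mathbb H^2_{BMO}$'': the inclusion is backwards (BMO is the smaller space), but this is a harmless slip of the pen since the preceding estimate already yields $\|Z\|_{BMO}<\infty$ directly, uniformly over all stopping times $\tau$.

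Your route, however, differs from the paper's. The paper proceeds by truncation: it solves the BSDEs with driver $f\wedge n$, obtains a uniform $\mathbb S^\infty$-bound on $Y^n$ from the explicit formula, shows $\{(Y^n,Z^n)\}$ is Cauchy in $\mathbb S^1\times\mathbb L^2$ via dominated convergence, identifies the limit, and only then applies It\^o's formula to extract the BMO bound on $Z$. You bypass the truncation entirely by exploiting that the driver has no $(Y,Z)$-dependence: a single martingale representation produces the solution explicitly, and the localized It\^o argument upgrades $Z$ from $\mathbb L^2_{\mathrm{loc}}$ to $\mathbb H^2_{BMO}$ in one stroke. This is shorter and more transparent; the paper's truncation is a vestige of the standard existence machinery for nonlinear BSDEs and is unnecessary here. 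The two proofs do coincide in their final stage (the It\^o-formula BMO estimate), and both rely on the same key observation that $\mathbb E[\int_\tau^T|f_s|\,ds\mid\mathcal F_\tau]\le\|\sqrt{|f|}\|_{BMO}^2$.
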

\begin{proof}
	Consider the truncated BSDE
	\[
		dY_t=f_t\wedge n\,dt+Z_t\,dW_t,\quad Y_T=\xi.
	\]
	By the boundedness of $f\wedge n$, there exists a unique solution $(Y^n,Z^n)\in \mathbb S^2\times \mathbb L^2$. 
	Moreover, $Y^n_t=\mathbb E\left[\left.\xi-\int_t^Tf_s\wedge n\,ds\right|\mathcal F_t\right]$, which implies
		\[
			|Y^n_t|\leq \|\xi\|+\left\|\sqrt{|f|}\right\|_{BMO}^2<\infty.
		\]
	For each $n$ and $m$, consider
	\[
		d(Y^n_t-Y^m_t)=\left(f_t\wedge n-f_t\wedge m\right)\,dt+(Z^n_t-Z^m_t)\,dW_t,\quad Y^n_T-Y^m_T=0.
	\]
It\^o's formula implies
\[
	(Y^n_t-Y^m_t)^2+\int_t^T(Z^n_s-Z^m_s)^2\,ds\leq 2\int_t^T|Y^n_s-Y^m_s||f_s\wedge n-f_s\wedge m|\,ds+2\int_t^T(Y^n_s-Y^m_s)(Z^n_s-Z^m_s)\,dW_s,
\]
where $\int_t^T(Y^n_s-Y^m_s)(Z^n_s-Z^m_s)\,dW_s$ is a true martingale. 
	 Taking expectations on both sides, we have
	 \begin{equation*}
	 	\begin{split}
	 	\mathbb E[(Y^n_t-Y^m_t)^2]+\mathbb E\left[\int_t^T(Z^n_s-Z^m_s)^2\,ds\right]\leq&~ 2\mathbb E\left[\int_0^T|Y^n_s-Y^m_s||f_s\wedge n-f_s\wedge m|\,ds\right]\\
	 	\leq&~ 2\|Y^n-Y^m\|_\infty\mathbb E\left[\int_0^T|f_s\wedge n-f_s\wedge m|\,ds\right]\\
	 	\leq&~4\left(\|\xi\|+\left\|\sqrt{|f|}\right\|^2_{BMO}\right)\mathbb E\left[\int_0^T|f_s\wedge n-f_s\wedge m|\,ds\right]\\
	 	\rightarrow&~0,\quad\textrm{as }m,n\rightarrow\infty,
		\end{split}
	 \end{equation*}
where the convergence in the last line is by the estimate $|f\wedge n-f\wedge m|\leq 2|f|$, which is integrable, and by the dominated convergence.
Moreover, by taking expectations and BDG inequality, it holds that
\[
	\mathbb E\left[\sup_{0\leq t\leq T}|Y^n_t-Y^m_t|\right]\leq \mathbb E\left[\int_0^T|f_s\wedge n-f_s\wedge m|\,ds\right]+\mathbb E\left[\sup_{0\leq t\leq T}\left|\int_t^T(Z^n_s-Z^m_s)\,dW_s\right|\right]\rightarrow 0.
\]
As a result, $\{(Y^n,Z^n)\}_n$ is a Cauchy sequence in $\mathbb S^1\times\mathbb L^2$. Let $(Y,Z)\in \mathbb S^1\times\mathbb L^2$ be the limit and both $Y$ and $Z$ are adapted. Define $Y^o$ by
\[
Y^o_t=\xi-\int_t^Tf_s\,ds-\int_t^TZ_s\,dW_s.
\]
It remains to prove $Y^o$ is adapted. The same argument as above yields

\[
	\mathbb E\left[\sup_{0\leq t\leq T}|Y^n_t-Y^o_t|\right]\leq \mathbb E\left[\int_0^T|f_s-f_s\wedge n|\,ds\right]+\mathbb E\left[\sup_{0\leq t\leq T}\left|\int_t^T(Z^n_s-Z_s)\,dW_s\right|\right]\rightarrow 0.
\]
As a result, $Y^o=Y$, which is adapted. The dynamics implies $Y_t=\mathbb E\left[\left.\xi-\int_t^Tf_s\,ds\right|\mathcal F_t\right]\in\mathbb S^\infty$. It\^o's formula yields for each $\tau\in\mathcal T$
\[
	Y^2_\tau+\int_\tau^T|Z_s|^2\,ds\leq \xi^2+2\int_\tau^T |Y_sf_s|\,ds-2\int_\tau^TY_sZ_s\,dW_s.
\]
Since $Y\in \mathbb S^\infty$ and $Z\in\mathbb L^2$, the above stochastic integral is a true martingale. By taking conditional expectations and supremum on both sides, we get
\[
	\|Z\|^2_{BMO}\leq \|\xi\|^2+ 2\|Y\|_\infty\left\|\sqrt{|f|}\right\|^2_{BMO}<\infty.
\]
The uniqueness result is obvious.
\end{proof}
\end{appendix}

\bibliography{Fu}

\begin{thebibliography}{10}

\bibitem{ARY-2019}
S.~Ahuja, W.~Ren, and T.~Yang.
\newblock Forward-backward stochastic differential equations with monotone
  functionals and mean field games with common noise.
\newblock {\em Stochastic Processes and their Applications},
  129(10):3859--3892, 2019.

\bibitem{BFY-2013}
A.~Bensoussan, J.~Frehse, and P.~Yam.
\newblock {\em Mean Field Games and Mean Field Type Control Theory}.
\newblock Springer, 2013.

\bibitem{BYZ-2015}
A.~Bensoussan, P.~Yam, and Z.~Zhang.
\newblock Well-posedness of mean-field type forward-backward stochastic
  differential equations.
\newblock {\em Stochastic Processes and their Applications}, 125(9):3327--3354,
  2015.

\bibitem{BLDR-2017}
J.~Bielagk, A.~Lionnet, and G.~dos Reis.
\newblock Equilibrium pricing under relative performance concerns.
\newblock {\em SIAM Journal on Financial Mathematics}, 8(1):435--482, 2017.

\bibitem{Cardaliaguet-2013}
P.~Cardaliaguet.
\newblock Notes on mean field games.
\newblock {\em www.ceremade.dauphine.fr/~cardaliaguet/MFG20130420.pdf}, 2013.

\bibitem{CD-2013}
R.~Carmona and F.~Delarue.
\newblock Probabilistic analysis of mean-field games.
\newblock {\em SIAM Journal on Control and Optimization}, 51(4):2705--2734,
  2013.

\bibitem{CD-2015}
R.~Carmona and F.~Delarue.
\newblock Forward-backward stochastic differential equations and controlled
  {M}c{K}ean-{V}lasov dynamics.
\newblock {\em Annals of Probability}, 43(5):2647--2700, 2015.

\bibitem{CD-2018a}
R.~Carmona and F.~Delarue.
\newblock {\em Probabilistic Theory of Mean Field Games with Applications I:
  Mean Field {FBSDE}s, Control, and Games}.
\newblock Springer, 2018.

\bibitem{CD-2018b}
R.~Carmona and F.~Delarue.
\newblock {\em Probabilistic Theory of Mean Field Games with Applications II:
  Mean Field Games with Common Noise and Master Equations}.
\newblock Springer, 2018.

\bibitem{Elie2019}
R.~Elie and D.~Possama{\"\i}.
\newblock Contracting theory with competitive interacting agents.
\newblock {\em SIAM Journal on Control and Optimization}, 57(2):1157--1188,
  2019.

\bibitem{ET-2015}
G.E. Espinosa and N.~Touzi.
\newblock Optimal investment under relative performance concerns.
\newblock {\em Mathematical Finance}, 25(2):221--257, 2015.

\bibitem{Frei2014}
C.~Frei.
\newblock Splitting multidimensional {BSDE}s and finding local equilibria.
\newblock {\em Stochastic Processes and their Applications}, 124(8):2654--2671,
  2014.

\bibitem{FR-2011}
C.~Frei and G.~dos Reis.
\newblock A financial market with interacting investors: does an equilibrium
  exist?
\newblock {\em Mathematics and Financial Economics}, 4:161--182, 2011.

\bibitem{Fromm2013}
A.~Fromm and P.~Imkeller.
\newblock Existence, uniqueness and regularity of decoupling fields to
  multidimensional fully coupled {FBSDE}s.
\newblock {\em arXiv:1310.0499}, 2013.

\bibitem{Fromm2017}
A.~Fromm and P.~Imkeller.
\newblock Utility maximization via decoupling fields.
\newblock {\em to appear in The Annals of Applied Probability}, 2017.

\bibitem{FGHP-2018}
G.~Fu, P.~Graewe, U.~Horst, and A.~Popier.
\newblock A mean field game of optimal portfolio liquidation.
\newblock {\em to appear in Mathematics of Operations Research}, 2020.

\bibitem{FH-2018}
G.~Fu and U.~Horst.
\newblock Mean field leader follower games with terminal state constraint.
\newblock {\em to appear in SIAM Journal on Control and Optimization}, 2020.

\bibitem{GLL-2011}
O.~Gu\'eant, J.M. Lasry, and P.L. Lions.
\newblock Mean field games and applications.
\newblock {\em Paris-Princeton lectures on mathematical finance 2010}, pages
  205--266, 2011.

\bibitem{HR-2019}
J.~Harter and A.~Richou.
\newblock A stability approach for solving multidimensional quadratic {BSDE}s.
\newblock {\em Electronic Journal of Probability}, (4):1--51, 2019.

\bibitem{Herdegen2019}
M.~Herdegen, J.~Muhle-Karbe, and D.~Possama{\"\i}.
\newblock Equilibrium asset pricing with transaction costs.
\newblock {\em arXiv:1901.10989}, 2019.

\bibitem{Horst-2005}
U.~Horst.
\newblock Stationary equilibria in discounted stochastic games with weakly
  interacting players.
\newblock {\em Games and Economic Behavior}, 52(1):83--108, 2005.

\bibitem{HIM-2005}
Y.~Hu, P.~Imkeller, and M.~M{\"u}ller.
\newblock Utility maximization in incomplete markets.
\newblock {\em The Annals of Applied Probability}, 15(3):1691--1712, 2005.

\bibitem{HT-2016}
Y.~Hu and S.~Tang.
\newblock Multi-dimensional backward stochastic differential equations of
  diagonally quadratic generators.
\newblock {\em Stochastic Processes and their Applications}, 126(4):1066--1086,
  2016.

\bibitem{HMC-2006}
M.~Huang, R.P. Malhame, and P.E. Caines.
\newblock Large population stochastic dynamic games: closed-loop
  {M}c{K}ean-{V}lasov systems and the {N}ash certainty equivalence principle.
\newblock {\em Communications in Information and Systems}, 6(3):221--252, 2006.

\bibitem{Jamneshan2017}
A.~Jamneshan, M.~Kupper, and P.~Luo.
\newblock Multidimensional quadratic {BSDE}s with separated generators.
\newblock {\em Electronic Communications in Probability}, 22(58):1--10, 2017.

\bibitem{KS-1991}
I.~Karatzas and S.~Shreve.
\newblock {\em Brownian motion and stochastic calculus}.
\newblock Springer, 1991.

\bibitem{Kardaras2015}
C.~Kardaras, H.~Xing, and G.~{\v{Z}}itkovic.
\newblock Incomplete stochastic equilibria with exponential utilities close to
  pareto optimality.
\newblock {\em arXiv:1505.07224}, 2015.

\bibitem{Kazamaki-2006}
N.~Kazamaki.
\newblock {\em Continuous Exponential Martingales and {BMO}}.
\newblock Springer, 2006.

\bibitem{Kobylanski-2000}
M.~Kobylanski.
\newblock Backward stochastic differential equations and partial differential
  equations with quadratic growth.
\newblock {\em The Annals of Probability}, 28(2):558--602, 2000.

\bibitem{Kramkov2016}
D.~Kramkov and S.~Pulido.
\newblock A system of quadratic {BSDE}s arising in a price impact model.
\newblock {\em The Annals of Applied Probability}, 26(2):794--817, 2016.

\bibitem{Kupper2019}
M.~Kupper, P.~Luo, and L.~Tangpi.
\newblock Multidimensional markovian {FBSDE}s with super-quadratic growth.
\newblock {\em Stochastic Processes and their Applications}, 129(3):902--923,
  2019.

\bibitem{LS-2020}
D.~Lacker and A.~Soret.
\newblock Many-player games of optimal consumption and investment under
  relative performance criteria.
\newblock {\em Mathematics and Financial Economics}, 14(2):263--281, 2020.

\bibitem{LZ-2019}
D.~Lacker and T.~Zariphopoulou.
\newblock Mean field and n-agent games for optimal investment under relative
  performance criteria.
\newblock {\em Mathematical Finance}, 29(4):1003--1038, 2019.

\bibitem{LL-2007}
J.M. Lasry and P.L. Lions.
\newblock Mean field games.
\newblock {\em Japanese Journal of Mathematics}, 2(1):229--260, 2007.

\bibitem{Luo2017}
P.~Luo and L.~Tangpi.
\newblock Solvability of coupled {FBSDE}s with diagonally quadratic generators.
\newblock {\em Stochastics and Dynamics}, 17(06):1750043, 2017.

\bibitem{MWZZ-2015}
J.~Ma, Z.~Wu, D.~Zhang, and J.~Zhang.
\newblock On wellposedness of forward-backward {SDE}s-a unified approach.
\newblock {\em The Annals of Applied Probability}, 25(4):2168--2214, 2015.

\bibitem{MS-2005}
M.~Mania and M.~Schweizer.
\newblock Dynamic exponential utility indifference valuation.
\newblock {\em The Annals of Applied Probability}, 15(3):2113--2143, 2005.

\bibitem{Nam-2019}
K.~Nam.
\newblock Global well-posedness of non-{M}arkovian multidimensional
  superquadratic {BSDE}.
\newblock {\em arXiv:1912.03692}, 2019.

\bibitem{Reis2020}
G.~dos Reis and V.~Platonov.
\newblock Forward utilities and mean-field games under relative performance
  concerns.
\newblock {\em arXiv:2005.09461}, 2020.

\bibitem{Rouge2000}
R.~Rouge and N.~El~Karoui.
\newblock Pricing via utility maximization and entropy.
\newblock {\em Mathematical Finance}, 10(2):259--276, 2000.

\bibitem{Tevzadze2008}
R.~Tevzadze.
\newblock Solvability of backward stochastic differential equations with
  quadratic growth.
\newblock {\em Stochastic processes and their Applications}, 118(3):503--515,
  2008.

\end{thebibliography}

\end{document}